\newcommand{\bF}{\mathbb F}
\newcommand{\bR}{\mathbb R}
\newcommand{\bZ}{\mathbb Z}
\newcommand{\Ext}{\mathsf{Ext}}
\newcommand{\HC}{H\!C}
\newcommand{\HH}{H\!H}
\newcommand{\Hom}{\mathsf{Hom}}
\newcommand{\Id}{\mathrm{Id}}
\newcommand{\Res}{\mathrm{Res}}
\newcommand{\sltwo}{\mathfrak{sl}(2)}
\newcommand{\tenk}{\otimes_k}
\newcommand{\ten}{\otimes}
\newcommand{\tenA}{\otimes_A}
\newcommand{\Tor}{\mathrm{Tor}}
\newcommand{\Tr}{\mathrm{Tr}}
\renewcommand{\leq}{\leqslant}
\renewcommand{\le}{\leqslant}
\renewcommand{\geq}{\geqslant}
\renewcommand{\ge}{\geqslant}
\numberwithin{equation}{section}
\theoremstyle{plain}
\newtheorem{theorem}[equation]{Theorem}
\newtheorem{lemma}[equation]{Lemma}
\newtheorem{prop}[equation]{Proposition}
\newtheorem{cor}[equation]{Corollary}
\theoremstyle{definition}
\newtheorem{defn}[equation]{Definition}
\newtheorem{hyp}[equation]{Hypothesis}
\newtheorem{eg}[equation]{Example}
\theoremstyle{remark} 
\newtheorem{remark}[equation]{Remark} 
\author{Dave Benson}
\address{Institute of Mathematics,  University of Aberdeen,
Fraser Noble Building, Aberdeen AB24 3UE
Scotland, UK}
\email{d.j.benson@abdn.ac.uk}
\author{Radha Kessar}
\address{Department of Mathematics, City, University of London EC1V 0HB,
  United Kingdom}
\email{radha.kessar.1@city.ac.uk}
\author{Markus Linckelmann}
\address{Department of Mathematics, City, University of London EC1V 0HB,
  United Kingdom}
\email{markus.linckelmann.1@city.ac.uk}
\title{On the BV structure of the Hochschild cohomology of 
finite group algebras}
\begin{document}

\begin{abstract}
We give a simple algebraic recipe for calculating the components of
the BV operator $\Delta$ on the Hochschild cohomology of a finite group
algebra with respect to the centraliser decomposition. 
We use this to investigate the properties of $\Delta$
and to make some computations for some particular finite groups.
\end{abstract}
\subjclass[2010]{20C20, 16E40}

\maketitle

\section{Introduction}

The Hochschild cohomology of a finite group is a Batalin--Vilkovisky 
(BV) algebra. This follows for example by observations of
Tradler~\cite{Tradler:2008a} from the fact that the group algebra over a 
commutative ring $k$ is a symmetric algebra. So there is a BV operator
$\Delta \colon \HH^n(kG) \to \HH^{n-1}(kG)$, which is related to
the Gerstenhaber bracket by the formula
\[ [x,y]=(-1)^{|x|}\Delta(x y) - (-1)^{|x|}\Delta(x) y
  -x \Delta(y). \]
Thus, if the cup product and the BV operator are known, so is the
Gerstenhaber bracket.

The BV operator on the Hochschild cohomology $\HH^*(kG)$ of a finite 
group $G$ over $k$ coming from the   standard symmetrising  form on $kG$ preserves the centraliser decomposition 
$$\HH^*(kG) \cong \bigoplus_{g}\ H^*(C_G(g),k)$$
under which $\Delta$ is the sum of degree $-1$ operators $\Delta_g$
on $H^*(C_G(g), k)$ (see  Section 3). Here $g$ runs over a set of representatives of the 
conjugacy classes in $G$. 
An individual component $\Delta_g$ depends
only on $g$ and $C_G(g)$ but not on $G$ itself, and hence in order to
describe $\Delta_g$ we may assume that $g$ is central in $G$. 
The following theorem gives a description of $\Delta_g$ in this
situation, and some of its properties.\footnote{We have to be careful
  to distinguish the cup product in Hochschild cohomology from 
  cup product in ordinary cohomology $H^*(C_G(g),k)$ in a component of the
centraliser decomposition. We use
  juxtaposition $xy$ to denote Hochschild cup product, and $x \cdot y$
  for cup product inside $H^*(C_G(g),k)$.}

\begin{theorem}\label{th:main}
Let $k$ be a commutative ring of coefficients.
Let $g$ be a central element in a finite group $G$, and let $z_g\colon
\bZ\times G \to G$ be the map which sends $(m,h)$ to $g^mh$.
Then 
\[ (z_g)^*\colon H^n(G,k) \to H^n(\bZ \times G,k)
\cong H^n(G,k) \oplus H^{n-1}(G,k) \]
has the form $(1,\Delta_g)$, where 
$\Delta_g\colon H^n(G,k)\to H^{n-1}(G,k)$ is the   component of the BV operator indexed by $g$.
The following properties hold for the map $\Delta_g$.
\begin{enumerate}
\item 
$\Delta_g$ is a $k$-linear derivation with respect to
multiplication in $H^*(G,k)$: for $x,y\in H^*(G,k)$ we have
\[ \Delta_g(x\cdot y) = \Delta_g(x)\cdot y+ (-1)^{|x|}x\cdot \Delta_g(y). \]
In particular, $\Delta_g$ is determined by its values on a 
set of generators for $H^*(G,k)$.
\item If $g\in Z(G)$ and $g'\in Z(G')$,
then the following diagram commutes.
\[ \xymatrix{H^i(G,k) \tenk H^j(G',k) \ar[r] 
\ar[d]_{(\Delta_g \otimes 1,1 \otimes \Delta_{g'})}& 
H^{i+j}(G\times G',k) \ar[d]^{\Delta_{(g,g')}} \\
(H^{i-1}(G,k) \tenk H^j(G',k)) \oplus (H^i(G,k) \tenk H^{j-1}(G',k))
\ar[r] & H^{i+j-1}(G\times G',k)} \]
where the horizontal maps are the K\"unneth maps and 
\[ \qquad (\Delta_g\otimes 1)(x\otimes y)=\Delta_g(x) \otimes y,\qquad 
(1\otimes \Delta_{g'})(x\otimes y) = (-1)^{|x|}x\otimes
  \Delta_{g'}(y). \]
If $k$ is a field, then $\Delta_{(g,g')}$ is determined by $\Delta_g$ and
$\Delta_{g'}$ by combining this with the K\"unneth formula.
\item If $\phi\colon G \to G'$ is a group homomorphism
sending $g\in Z(G)$ to $g'\in Z(G')$ then the following diagram
commutes.
\[ \xymatrix{H^n(G',k) \ar[r]^{\phi^*} \ar[d]_{\Delta_{g'}} &
H^n(G,k) \ar[d]^{\Delta_g} \\
H^{n-1}(G',k) \ar[r]^{\phi^*} & H^{n-1}(G,k)} \]
\item If $\rho\colon k\to k'$ is a homomorphism of commutative rings, then
the following diagram commutes.
\[ \xymatrix{H^n(G,k) \ar[r]^{\rho_*} \ar[d]_{\Delta_{g}} &
H^n(G,k') \ar[d]^{\Delta_g} \\
H^{n-1}(G,k) \ar[r]^{\rho_*} & H^{n-1}(G,k')} \]
\item If $g$ and $g'$ are elements of $Z(G)$ then 
we have $\Delta_{gg'}=\Delta_g + \Delta_{g'}$.
\item If $H$ is a subgroup of $G$ containing $g\in Z(G)$ then $\Delta_g$
commutes with the transfer map: if $x\in H^n(H,k)$ then
\[ \Tr_{H,G}(\Delta_g(x))=\Delta_g(\Tr_{H,G}(x))\in H^n(G,k). \]
\item If $g\in Z(G)$ and $k$ is a field of prime characteristic $p$, then
the map $\Delta_g$ is determined by its action on the cohomology
of a Sylow $p$-group of $G$, as follows. Let $g=g_pg_{p'}=g_{p'}g_p$
with $g_p$ a $p$-element and $g_{p'}$ a $p'$-element, and let
$P$ be a Sylow $p$-subgroup of $G$. Then $g_p\in Z(P)$ and
the following diagram commutes.
\[ \xymatrix{H^n(G,k)\ \ar@{>->}[r]^{\Res_{G,P}} \ar[d]_{\Delta_g} & 
H^n(P,k) \ar[d]^{\Delta_{g_p}} \\
H^{n-1}(G,k)\ \ar@{>->}[r]^{\Res_{G,P}} & H^{n-1}(P,k)} \]
\item In the case $k=\bF_p$, the map $\Delta_g$ commutes with the 
action of Steenrod operations on $H^*(G,\bF_p)$, and with the 
Bockstein homomorphism.
\item If $x\in H^1(G,k)$, we can regard $x$ as an element of $\Hom(G,k)$.
With this identification, we have $\Delta_g(x)=x(g)$.
\item If $x\in H^2(G,k)$ is in the image of $H^2(G,\bZ)\to H^2(G,k)$
then $\Delta_g(x)=0$.
\item If $x\in H^2(G,k)$ corresponds to a central extension
\[ 1 \to k^+ \to K \to G \to 1, \]
then for $h\in G$ we choose any inverse image $\hat h\in K$. Then
we have 
\[ \Delta_g(x)(h) = [\hat g, \hat h] \in k^+.\]
\end{enumerate}
\end{theorem}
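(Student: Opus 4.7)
The plan is to establish the key identification $(z_g)^* = (1,\Delta_g)$ first (using that centrality of $g$ makes $z_g$ a genuine group homomorphism, so $(z_g)^*$ is a natural ring map), and then to deduce each of (i)--(xi) from this identification combined with naturality and elementary computations in $H^*(\bZ,k) \cong k \oplus k\tau$, where $\tau$ has degree $1$ and $\tau^2=0$. Verifying that the second component of $(z_g)^*$ really matches the BV operator $\Delta_g$ constructed in Section~3 is the main technical step: it amounts to comparing the formula coming from the standard symmetrising form on $kG$ with the effect on cocycles of pulling back along $z_g$. I expect this is where most of the work lies. Once in hand, we may write $(z_g)^*(x) = x + \tau\,\Delta_g(x)$ under the K\"unneth decomposition, and the remaining parts are largely formal.

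Parts (i)--(v) are then K\"unneth manipulations. For (i), $(z_g)^*$ is a ring homomorphism and $\tau^2=0$, so expanding $(z_g)^*(x\cdot y) = (z_g)^*(x)\cdot (z_g)^*(y)$ immediately yields the graded Leibniz rule with the correct Koszul sign. For (ii), I would factor $z_{(g,g')}\colon \bZ\times G\times G'\to G\times G'$ through $z_g\times z_{g'}$ precomposed with the diagonal on the $\bZ$-factor, and use $\tau^2=0$ to extract the stated formula. Parts (iii) and (iv) follow directly from naturality of $z_g$ in $G$ and in $k$, since $z_{g'}\circ(\Id_\bZ\times\phi)=\phi\circ z_g$ and the K\"unneth splitting is functorial. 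For (v), the factorisation $z_{gg'} = z_g\circ(\Id_\bZ\times z_{g'})\circ(d\times\Id_G)$, where $d\colon\bZ\to\bZ\times\bZ$ is the diagonal, combined with $\tau^2=0$ in the middle K\"unneth calculation gives $\Delta_{gg'} = \Delta_g+\Delta_{g'}$.

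For (vi), I would observe that $z_g$ restricts compatibly to $\bZ\times H\hookrightarrow \bZ\times G$ and invoke the naturality of transfer for the corresponding pullback square (a Mackey-style argument). Part (vii) is a satisfying consequence of (v): we have $\Delta_g = \Delta_{g_p}+\Delta_{g_{p'}}$, and when $k$ has characteristic $p$ the homomorphism $z_{g_{p'}}$ factors through $\bZ/e\times G$ where $e=|g_{p'}|$ is coprime to $p$; since $H^{>0}(\bZ/e,k)=0$, this forces $\Delta_{g_{p'}}=0$ on $H^*(G,k)$, and the commuting square then follows from (iii) applied to $P\hookrightarrow G$. For (viii), naturality of the Steenrod operations and of the Bockstein against $(z_g)^*$, combined with the Cartan formula on $x+\tau\,\Delta_g(x)$, reduces matters to their action on $\tau\in H^1(\bZ,\bF_p)$; here $\beta(\tau)=0$ (since $H^2(\bZ,\bF_p)=0$) and $P^i(\tau)=0$ for $i\geq 1$ by degree, and the claimed commutation relations then fall out once the signs are tracked carefully.

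Parts (ix)--(xi) are concrete computations. For (ix) I would directly evaluate $(z_g)^*(x)(m,h) = x(g^mh) = m\cdot x(g) + x(h)$, so the $\tau$-coefficient is $x(g)$. For (x), I would lift $x$ to $\tilde x\in H^2(G,\bZ)$, apply (iv) to reduce to the vanishing of $\Delta_g(\tilde x)\in H^1(G,\bZ)=\Hom(G,\bZ)$, which is zero because $G$ is finite. Part (xi) is the main remaining obstacle: I would represent $x$ by a normalised $2$-cocycle $f\colon G\times G\to k^+$ presenting the extension $K$, pull $f$ back along $z_g$, and identify the resulting class in $H^1(G,k^+)=\Hom(G,k^+)$ with $h\mapsto f(g,h)-f(h,g) = [\hat g,\hat h]$; this is a direct but sign-sensitive cochain-level calculation where the centrality of $g$ is essential to making the commutator land in $k^+$.
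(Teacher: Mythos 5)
Your reductions of (i)--(xi) to the single identity $(z_g)^*(x) = 1\otimes x + \tau\otimes\Delta_g(x)$ are essentially all sound, and in places you take a genuinely different and more elementary route than the paper: for (ii) and (v) you use factorisations of $z_{(g,g')}$ and $z_{gg'}$ through diagonals together with $\tau^2=0$, and for (vii) you kill $\Delta_{g_{p'}}$ by factoring $z_{g_{p'}}$ through $\bZ/e\times G$ with $H^{>0}(\bZ/e,k)=0$; the paper instead routes all of (ii), (v), (vii) through the identification $\Delta_g = D_{g-1}$ with a degree $-1$ operator built from a homotopy $s$ on a projective resolution satisfying $\delta s + s\delta = (g-1)\cdot$, and then uses the algebraic identities $D_{y+z}=D_y+D_z$, $D_{(g-1)(h-1)}=0$. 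Your route avoids that machinery entirely; the paper's buys, in exchange, explicit chain-level formulas (used later for the bar resolution and for degree $2$) and the statement $\Delta_1 = D_0 = 0$. Your cochain-level computation for (xi) via $f(g,h)-f(h,g)=[\hat g,\hat h]$ is also correct and is in substance the paper's Lemma \ref{lem:2cocycleswitch} combined with its explicit bar-resolution homotopy $s_1(a_0,a_1)=(a_0,g,a_1)-(a_0,a_1,g)$.

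The genuine gap is the very first assertion, which you explicitly defer: that the second component of $(z_g)^*$ under the K\"unneth splitting \emph{is} the component $\Delta_g$ of the BV operator coming from the standard symmetrising form. Everything else in your proposal establishes properties of the operator \emph{defined} by $(z_g)^*$; without this identification none of it says anything about $\Delta$. Your proposed method --- ``comparing the formula coming from the standard symmetrising form on $kG$ with the effect on cocycles of pulling back along $z_g$'' --- is not carried out, and it is the heart of the matter. The paper does not proceed by a cochain comparison with Tradler's formula at all: it uses that $\Delta$ is by definition dual to the Connes operator $B\circ I$, invokes Burghelea's theorem that the Connes exact sequence respects the centraliser decomposition and is, on the summand indexed by $g$, the Gysin sequence of the fibration $S^1\to BC_G(g)\to B\hat C_G(g)$ (with $\hat C_G(g)=(\bR\times C_G(g))/\bZ$), and then identifies the second component of $(z_g)_*$ with $B\circ I$ by a map-of-fibrations argument comparing $S^1\to S^1\times BG\to BG$ with $S^1\to BG\to B\hat G$ (Theorems \ref{thm:Deltag1} and \ref{thm:Deltag}). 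If you want to avoid the cyclic-homology input, you would need to verify directly that Tradler's cochain formula for $\Delta$, restricted to the $g$-summand of the Hochschild complex, agrees with the $\mu$-component of $z_g^*$; this is a nontrivial computation (carried out in the literature by Liu and Zhou) and cannot be waved through. You should either supply that computation or import the Connes/Burghelea argument; as written, the keystone of the theorem is missing.
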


The proof of Theorem \ref{th:main} appears below in the following
places.The  first assertion is  proved in Theorem~\ref{thm:Deltag}. 
The statements (i), (iii), (iv), (vi), (viii) are proved in 
Corollary \ref{co:derivation}, statement (ii) is in Proposition
\ref{kunneth-prop2}, the statements (v), (vii) are in 
Proposition \ref{Delta-g-additive}, statement (ix) is 
Proposition \ref{pr:deg1}, statement (x) is Proposition \ref{pr:deg2},
and statement (xi) is Theorem \ref{th:DeltaH2}. 

Part (vii) of Theorem \ref{th:main} says that if $k$ is a field of
prime characteristic $p$ we may as well suppose that $G$ is a 
$p$-group for the purpose of computation, and hence we give in 
Section \ref{ExamplesSection} a number of examples where we 
compute $\Delta_g$ in the case where $g\in Z(G)$ and $G$ is a finite 
$p$-group. Our first examples are cyclic groups, and more generally, 
abelian groups. Then we deal with dihedral, quaternion, and 
semidihedral $2$-groups. 

Following Tradler~\cite{Tradler:2008a}, the operator $\Delta_g$ is 
defined as the dual of the Connes operator $B\circ I$ in Hochschild 
homology. By work of Burghelea \cite{Burghelea:1985a}, the 
Connes exact sequence preserves the centraliser decomposition, and the 
corresponding components of the Connes exact sequence can be used to 
describe $\Delta_g$; see Theorems \ref{thm:Deltag1} and 
\ref{th:(1,Delta)} below for details.  
For our analysis of $\Delta_g$, we first provide a description of the 
Connes operator in the context of a general discrete group $G$.
Again, it only depends on $g$ and $C_G(g)$, and we give the following 
description for $g$ central in $G$. Consider the map $z_g\colon
\bZ\times G \to G$ as in the theorem above. Then
\[ (z_g)_* \colon H_n(\bZ\times G,k)\cong H_n(G,k)\oplus H_{n-1}(G,k)
\to H_n(G,k) \]
is equal to $(1,B \circ I)$. Dualising this statement to give a
statement about Hochschild cohomology requires the
use of a symmetrising form on $kG$, and we make use of
the standard one, whose value on $(g,h)$ is one if $gh=1$
and zero otherwise. 

We use Theorem \ref{th:main} to give an explicit description of the 
BV operator $\Delta$ in the Hochschild cohomology of finite groups 
over an arbitrary commutative ring $k$ in terms of ordinary cohomology, 
based on a simple general principle using homotopies in order to 
construct degree $-1$ operators in cohomology. This general principle, 
and its relation to $\Delta_g$, is expressed in the following theorem, 
which summarises the main parts of the Theorems \ref{degreeminusone1} 
and \ref{thm:Deltag2} below. By a homotopy on a chain complex (resp. 
cochain complex) we mean a graded map of degree $1$ (resp. $-1$). 

\begin{theorem} \label{main2}
Let $A$ be an algebra over a commutative ring $k$ and let $U$, $V$ be
$A$-modules. Let $z\in$ $Z(A)$ such that $z$ annihilates $U$ and $V$.
Let $(P,\delta)$ be a projective resolution of $U$.

There is a homotopy $s$ on $P$ such that the chain endomorphism
$s\circ\delta + \delta \circ s$ of $P$ is equal to multiplication by
$z$ on the components of $P$. For any such homotopy, the induced
homotopy $s^\vee$ on the cochain complex $\Hom_A(P,V)$ obtained from
applying the functor $\Hom_A(-,V)$ to $s$ is a cochain map 
$\Hom_A(P,V)\to$ $\Hom_A(P[1],V)$. In particular, upon taking 
cohomology, $s^\vee$ induces a degree $-1$ operator, denoted $D_z$, on 
$\Ext^*_A(U,V)$, and then $D_z$ is independent of the choice of $s$. 

If $G$ is a finite group, $A=kG$, $U=V=k$ and $g\in$ $Z(G)$, 
then  $\Delta_g = D_{g-1}$.  
\end{theorem}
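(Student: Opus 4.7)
The plan is to treat the three assertions of the theorem in turn.

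\emph{Construction of $s$.} Since $z\in Z(A)$ annihilates $U$, multiplication by $z$ on $P$ is an $A$-linear chain endomorphism lifting the zero map on $H_0(P)=U$. The zero endomorphism of $P$ is another such lift, so by the standard comparison theorem for bounded-below complexes of projectives these two chain maps are chain homotopic. Any such chain homotopy is a graded $A$-linear map $s\colon P_n\to P_{n+1}$ with $\delta s+s\delta=z\cdot\Id_P$.

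\emph{Cochain map property and independence.} Let $\partial$ denote the differential on $\Hom_A(P,V)$. Applying $\Hom_A(-,V)$ to $\delta s+s\delta=z$ yields $\partial s^\vee+s^\vee\partial = z\cdot\Id$ on $\Hom_A(P,V)$, and the right-hand side vanishes because $z$ annihilates $V$. Once the shift sign on $\Hom_A(P[1],V)$ is absorbed, this is exactly the statement that $s^\vee$ is a cochain map $\Hom_A(P,V)\to\Hom_A(P[1],V)$ of degree $-1$. For independence of the choice of $s$, if $s'$ is another such homotopy then $t:=s-s'$ satisfies $\delta t+t\delta=0$, and is a cocycle in the Hom-complex $\Hom^\bullet_A(P,P)$ representing a class in $\Ext^{-1}_A(U,U)=0$ (negative Ext vanishes). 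Hence $t$ is a coboundary, so there exists an $A$-linear $\sigma\colon P_n\to P_{n+2}$ with $t=\delta\sigma-\sigma\delta$. For any cocycle $f\in\Hom_A(P_n,V)$, using $f\delta=0$ one computes $f\circ t=-\partial(f\circ\sigma)$, a coboundary; so $s^\vee$ and $(s')^\vee$ induce the same operator $D_z$ on $\Ext^*_A(U,V)$.

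\emph{The identification $\Delta_g=D_{g-1}$, and the main obstacle.} Let $B$ be the standard length-one projective resolution $0\to k\bZ\xrightarrow{t-1}k\bZ\to 0$ of $k$ over $k\bZ$. Then $B\tenk P$ is a projective resolution of $k$ over $k[\bZ\times G]\cong k\bZ\tenk kG$, and $(z_g)^*$ is induced by any chain map $\Phi\colon B\tenk P\to P$ of $k[\bZ\times G]$-complexes lifting $\Id_k$, where $P$ carries the $k[\bZ\times G]$-structure in which the generator $t$ acts as $g$. Its two components $\Phi_0\colon B_0\tenk P_n\to P_n$ and $\Phi_1\colon B_1\tenk P_{n-1}\to P_n$ are determined, respectively, by the identity on $P$ and by an $A$-linear map $s\colon P_{n-1}\to P_n$; unwinding compatibility with the differentials shows that the chain map condition on $\Phi$ is exactly $\delta s+s\delta=(g-1)\cdot\Id_P$. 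For a cocycle $f\colon P_n\to k$, the pulled-back cocycle $f\circ\Phi\in\Hom_{k[\bZ\times G]}(B\tenk P,k)$ has Künneth components $f$ on $B_0\tenk P_n$ and $f\circ s=s^\vee(f)$ on $B_1\tenk P_{n-1}$, under the splitting $H^n(\bZ\times G,k)\cong H^n(G,k)\oplus H^{n-1}(G,k)$. The formula $(z_g)^*=(1,\Delta_g)$ from Theorem~\ref{th:main} then identifies the second Künneth component as $\Delta_g[f]$, whence $\Delta_g[f]=[s^\vee(f)]=D_{g-1}[f]$. The only non-routine piece in the whole argument is the sign bookkeeping in this last step: one must track the signs in the shift $P[1]$, in the total-complex differential on $B\tenk P$, and in the Künneth isomorphism carefully enough to recover $(z_g)^*=(1,\Delta_g)$ with second component exactly $s^\vee$.
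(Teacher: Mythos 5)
Your proposal is correct and follows essentially the same route as the paper: existence of $s$ via the comparison theorem, the cochain-map property from $z$ annihilating $V$, independence of $s$ via the vanishing of $\Ext^{-1}_A(U,U)$, and the identification $\Delta_g=D_{g-1}$ by lifting the identity to a chain map $P_\bZ\tenk P\to z_g^*(P)$ whose off-diagonal component encodes the homotopy $s$ (this is exactly the paper's Theorem \ref{thm:Pqi1} combined with Proposition \ref{prop:HZG} and Theorem \ref{thm:Deltag}). The sign bookkeeping you flag at the end is indeed the only delicate point, and it is carried out explicitly in the paper's Remark \ref{signconventions} and Proposition \ref{prop:HZG}.
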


See the Remark \ref{signconventions} below for sign conventions for
the differentials of the complexes arising in this theorem.
While the first part of Theorem \ref{main2} is a routine verification
(see the proof of Theorem \ref{degreeminusone1}), the identification
$\Delta_g=D_{g-1}$ in the case $A=kG$ in Theorem \ref{thm:Deltag2} 
requires Theorem \ref{thm:Deltag}, the proof of which is based on the 
Connes exact sequence relating relating Hochschild and 
cyclic cohomology. 
We give explicit homotopies for the bar resolution in Theorem 
\ref{thm:Deltag3}. We use this description in Section \ref{appSection} 
to give a short proof of the formula in Theorem \ref{th:main} (xi),
restated as Theorem \ref{th:DeltaH2} below, for the components of the 
BV operator in degree $2$, in terms of central extensions corresponding 
to degree $2$ elements in group cohomology. 

The theme of deciding when $\HH^1(kG)$ is a soluble Lie algebra has been
recently investigated by a number of authors \cite{Eisele/Raedschelders:2020a, 
Linckelmann/RubioyDegrassi:2020a, RubioyDegrassi/Schroll/Solotar:preprint}.
As an application of Theorem \ref{th:main}, we add to these the
following results, which are proved as Theorems~\ref{th:Phi<Z} 
and~\ref{th:extraspecial}. We denote by $\Phi(G)$ the Frattini subgroup of
a finite group $G$.

\begin{theorem}
If $G$ is a finite $p$-group such that $|Z(G):Z(G)\cap\Phi(G)|\ge 3$
then the Lie algebra $\HH^1(kG)$ is not soluble.
\end{theorem}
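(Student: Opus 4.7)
The plan is to exhibit a non-soluble Lie subquotient of $\HH^1(kG)$ inside the direct sum of the components of the centraliser decomposition indexed by central elements. Write $\phi_g(\alpha)$ for the image of $\alpha \in H^1(G,k)$ in the $g$-component of $\HH^*(kG)$, and set
\[ \mathfrak{A} = \bigoplus_{g \in Z(G)} H^1(G,k) \subseteq \HH^1(kG). \]
For $g,h \in Z(G)$ the centralisers $C_G(g)$, $C_G(h)$ and $C_G(gh)$ are all equal to $G$, so the cup product in the centraliser decomposition restricts on $\mathfrak{A}$ to $\phi_g(\alpha)\phi_h(\beta) = \phi_{gh}(\alpha \cdot \beta)$. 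Applying the BV formula given at the start of the introduction to $x = \phi_g(\alpha)$ and $y = \phi_h(\beta)$ of degree $1$, together with the derivation property Theorem~\ref{th:main}(i), the additivity $\Delta_{gh} = \Delta_g + \Delta_h$ of part~(v), and the degree-one evaluation $\Delta_g(\alpha) = \alpha(g)$ of part~(ix), a short computation yields
\[ [\phi_g(\alpha), \phi_h(\beta)] = \phi_{gh}\bigl(\beta(g)\alpha - \alpha(h)\beta\bigr). \]
In particular, $\mathfrak{A}$ is a Lie subalgebra of $\HH^1(kG)$.

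Next, project $\mathfrak{A}$ onto a quotient depending only on $\bar{Z} := Z(G)/(Z(G)\cap\Phi(G))$, which is an elementary abelian $p$-group, regarded as an $\bF_p$-vector space. Since every $\alpha \in H^1(G,k)$ vanishes on $\Phi(G)$, restriction yields a surjection $H^1(G,k) \twoheadrightarrow \bar{Z}^* := \Hom(\bar{Z}, k)$ and $\alpha(g)$ depends only on the class $\bar g \in \bar Z$. Define the $k$-Lie algebra
\[ \mathcal{L} = \bar{Z}^* \otimes_k k[\bar{Z}],\qquad [\alpha \otimes \bar{g}, \beta \otimes \bar{h}] = \bigl(\beta(\bar{g})\alpha - \alpha(\bar{h})\beta\bigr)\otimes(\bar{g}+\bar{h}). \]
Then $\phi_g(\alpha) \mapsto \alpha|_{\bar{Z}} \otimes \bar{g}$ is a well-defined surjective Lie algebra homomorphism $\mathfrak{A} \twoheadrightarrow \mathcal{L}$, so it suffices to exhibit a non-soluble subalgebra of $\mathcal{L}$.

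The hypothesis $|\bar{Z}| \ge 3$ forces $\dim_{\bF_p} \bar{Z} \ge 1$ if $p$ is odd and $\dim_{\bF_2} \bar{Z} \ge 2$ if $p = 2$. If $p$ is odd, pick $0 \ne v \in \bar{Z}$ and $\alpha \in \bar{Z}^*$ with $\alpha(v) = 1$; the vectors $e := \alpha \otimes v$, $f := \alpha \otimes (-v)$, $h := \alpha \otimes 0$ are linearly independent (since $v$, $-v$, $0$ are distinct in $\bar{Z}$), and the bracket formula gives $[e,f] = 2h$, $[h,e] = -e$, $[h,f] = f$, spanning a copy of $\mathfrak{sl}_2(k)$ after rescaling $h$. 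If $p = 2$, choose linearly independent $v_1, v_2 \in \bar{Z}$ with dual $\alpha_1, \alpha_2 \in \bar{Z}^*$ and set $e := \alpha_1 \otimes v_2$, $f := \alpha_2 \otimes v_1$, $h := (\alpha_1 + \alpha_2)\otimes(v_1+v_2)$; a direct check produces $[e,f] = h$, $[h,e] = f$, $[h,f] = e$, so the three-dimensional span is perfect, hence not soluble.

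In either case $\mathcal{L}$ contains a perfect subalgebra, whence $\mathfrak{A}$ and $\HH^1(kG)$ are not soluble. The principal obstacle is the first step: extracting the explicit bracket formula on $\mathfrak{A}$ from the BV identity and the properties of $\Delta_g$ in Theorem~\ref{th:main}, which rests on identifying the cup product on the central part of the centraliser decomposition with the ordinary cup product in $H^*(G,k)$. Once this formula is in place, the construction of the perfect subalgebra is an elementary verification.
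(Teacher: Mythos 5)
Your proposal is correct and follows essentially the same route as the paper's proof of Theorem~\ref{th:Phi<Z}: both work inside the Lie subalgebra $kZ(G)\otimes H^1(G,k)$, derive the bracket formula $[g\otimes x,h\otimes y]=gh\otimes(y(g)x-x(h)y)$ from the BV identity combined with Theorem~\ref{th:main}\,(i), (v), (ix) and the Siegel--Witherspoon product for central elements, and for odd $p$ exhibit what is in substance the same copy of $\sltwo$. The only genuine difference is in the remaining case: where the paper (for $Z(G)/(Z(G)\cap\Phi(G))$ non-cyclic) writes down a spanning set $U$ with $[U,U]\supseteq U$, you pass to the quotient $\Hom(Z(G)/(Z(G)\cap\Phi(G)),k)\otimes_k k[Z(G)/(Z(G)\cap\Phi(G))]$ and produce a three-dimensional perfect subalgebra for $p=2$; both that reduction and the bracket computations check out.
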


\begin{theorem}
If $G$ is an extraspecial $p$-group then the Lie algebra $\HH^1(kG)$ is soluble.
\end{theorem}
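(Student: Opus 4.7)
The strategy is to exploit the centraliser decomposition together with the natural $V$-grading of $\HH^*(kG)$, where $V = G/Z(G)$, arising from the extraspecial hypothesis that every non-central conjugacy class of $G$ is a full coset of $Z(G)$. Thus the conjugacy classes of $G$ are indexed by $Z(G) \sqcup (V \setminus \{0\})$, giving
\[ \HH^n(kG)_0 \cong k[Z(G)] \tenk H^n(G, k), \qquad \HH^n(kG)_v = H^n(C_G(g_v), k) \]
for $v \in V \setminus \{0\}$, where $g_v$ is a chosen lift of $v$. Because $g \cdot s h s^{-1} \in gh \cdot Z(G)$ for all $g, h, s \in G$, both the Hochschild cup product and the Gerstenhaber bracket respect this $V$-grading.

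The first step is to verify that the grade zero piece $M = \HH^1(kG)_0$ is an abelian ideal of $\HH^1(kG)$. It is an ideal by the grading. Since $[G,G] = \Phi(G) = Z(G)$ in an extraspecial group, any $x \in H^1(G, k) = \Hom(G/[G,G], k)$ vanishes on $Z(G)$; hence by Theorem~\ref{th:main}(ix) one has $\Delta_z(x) = x(z) = 0$ for every $z \in Z(G)$, so $\Delta$ vanishes on $M$. Combining this with the BV formula and the derivation property of $\Delta_z$ on the internal cup product in $H^*(G, k)$ (Theorem~\ref{th:main}(i)) gives $[M, M] = 0$: for $x, y \in M$, the product $xy$ is supported in grade $0$ components of $\HH^2$, where $\Delta_z$ vanishes on products of $H^1$-classes by the derivation rule.

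The substantive step is to show that the quotient $\bar L = \HH^1(kG)/M = \bigoplus_{v \in V \setminus \{0\}} \bar L_v$ is soluble. By the grading, $[\bar L_v, \bar L_w] \subseteq \bar L_{v+w}$ (with the convention $\bar L_0 = 0$), so in particular $[\bar L_v, \bar L_{-v}] = 0$; for $p = 2$ this already forces $[\bar L_v, \bar L_v] = 0$. To conclude, I would compute the remaining brackets using the BV formula together with the cochain-level description of $\Delta_g$ afforded by Theorems~\ref{main2} and~\ref{thm:Deltag3}, and the double-coset formula for Hochschild cup products between components indexed by non-commuting elements. A natural route is to induct on the symplectic rank $n$ of $V$, exploiting the central product decomposition of an extraspecial group of rank $n$ into pieces of order $p^3$, combined with the K\"unneth-type statement Theorem~\ref{th:main}(ii); the base case $n=1$ is handled by the direct computations for groups of order $p^3$ in Section~\ref{ExamplesSection}.

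The main obstacle is precisely this final step: the $V$-grading alone does not force solubility of $\bar L$, so the detailed cocycle-level analysis of the brackets, and in particular the identification of a short chain of graded ideals in $\bar L$ with abelian successive quotients, is where the real work lies. The symplectic structure on $V$ should be the organising principle, since the triviality of $[\bar L_v, \bar L_{-v}]$ in $\bar L$ suggests that pairs of opposite grades collapse, and one expects the derived series of $\bar L$ to terminate after a bounded number of steps independent of $n$.
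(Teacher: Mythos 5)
Your proposal has two genuine problems, one structural and one of substance.

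First, the structural error: the grade-zero piece $M=\HH^1(kG)_0\cong kZ(G)\otimes H^1(G,k)$ is \emph{not} an ideal of $\HH^1(kG)$, and "it is an ideal by the grading" is backwards. The grading gives $[\HH^1_0,\HH^1_v]\subseteq \HH^1_v$, which lies outside $M$ for $v\neq 0$; indeed the paper shows (remark after Proposition~\ref{pr:Z(G)leqPhi(G)}) that for $h\notin\Phi(G)$ one can choose $x\in H^1(C_G(1),k)$ with $[x,y]=y$ for all $y\in H^1(C_G(h),k)$, so $[M,\HH^1_v]=\HH^1_v\neq 0$. Consequently your quotient $\bar L=\HH^1(kG)/M$ does not exist as a Lie algebra. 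The correct filtration runs the other way: in the paper the ideals are $Y_i=\bigoplus_{j\ge i}X_j$ (components with centraliser of index at least $p^i$), so that $Y_1$ (your $\bigoplus_{v\neq 0}\HH^1_v$) is an ideal with abelian quotient $X_0=M$, and the work is to control $[Y_1,Y_1]$. Note also that $[\HH^1_v,\HH^1_{-v}]\subseteq\HH^1_0$ a priori, so even $Y_1$ being an ideal is not a consequence of the $V$-grading alone; the paper needs Hypothesis~\ref{hyp:cent} (via Lemma~\ref{le:PhiTr} and transfer vanishing) to kill those contributions and obtain $[X_i,X_j]\subseteq Y_j$.

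Second, the substantive step is not carried out. You correctly identify that the $V$-grading does not force solubility and that the brackets between nonzero grades must be computed, but you only sketch a route (cocycle-level analysis, induction on symplectic rank via central products and K\"unneth). That route is doubtful as stated: an extraspecial group of rank $n$ is a \emph{central} product of groups of order $p^3$, not a direct product, so Theorem~\ref{th:main}\,(ii) does not apply. The paper instead shows directly, using the vanishing of transfers under Hypothesis~\ref{hyp:cent} and an explicit sum over the double coset representatives $1,v,\dots,v^{p-1}$ in formula~\eqref{eq:[x,y]4}, that $[X_1,X_1]=0$ for all extraspecial groups except $\bZ/p^2\rtimes\bZ/p$ (giving derived length two), and handles the exceptional case by a further splitting $X_1=X_1'\oplus X_1''$ according to the isomorphism type of the centraliser (giving derived length three). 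This cancellation over cosets, and the special treatment of $\bZ/p^2\rtimes\bZ/p$ where the transfer from $Z$ to $\bZ/p^2$ is nonzero in degree one, is exactly the content your proposal defers; without it the theorem is not proved.
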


\medskip\noindent
\textbf{Acknowledgement.} 
The authors would like to thank the referee for an impressively detailed and
insightful list of comments and suggestions. 
Parts of this paper were written while all three authors were in 
residence at the Mathematical Sciences Research Institute in Berkeley, 
California, during the Spring 2018 semester, supported
by the National Science Foundation under Grant No.\ DMS-1440140.
The first author would like to thank the Isaac Newton Institute for 
Mathematical Sciences for support and hospitality during the 
programme ``Groups, representations and applications: new
perspectives'' during the Spring 2020 semester, 
when part of the work on this paper was undertaken,
supported by EPSRC grant EP/R014604/1.
The first author also thanks City, University of London for its hospitality 
during part of the preparation of this paper. The second author
acknowledges support from EPSRC grant EP/T004592/1, and the
third author 
acknowledges support from EPSRC grant EP/M02525X/1.

\section{Hochschild and cyclic homology of $kG$}     

The BV-operator in string topology  on the homology of a free loop space $LX$ is 
obtained by applying homology to the rotation map $S^1\times LX \to LX$ and 
decomposing  $H_*(S^1\times LX)$ using the  K\"unneth formula together with
the fundamental class in $H_1(S^1)$. We give a similar description for group
(co-)homology. 
Background material for this section may be found in
Benson~\cite[\S\S2.11--2.15]{Benson:1998c}. Other references include 
Loday~\cite[Chapter 7]{Loday:1998a}, Burghelea \cite{Burghelea:1985a}, and
Karoubi and Villamayor \cite{Karoubi/Villamayor:1990a}.

Let $G$ be a discrete group, and $k$ a commutative ring of    
coefficients. Hochschild homology of $kG$ has a centraliser
decomposition
\[ \HH_*(kG)\cong\bigoplus_g H_*(C_G(g),k) \]
where $g$ runs over a set of representatives of the conjugacy classes of 
$G$. This decomposition is unique up to unique isomorphism. A similar 
description of cyclic homology appears 
in~\cite{Burghelea:1985a,Karoubi/Villamayor:1990a}, and was 
reinterpreted in~\cite{Benson:1998c} in terms of extended centralisers, 
as follows. If $g\in G$, we define the extended centraliser 
$\hat C_G(g)$ to be the quotient $(\bR\times C_G(g))/\bZ$, where $\bZ$ 
is embedded in $\bR\times C_G(g)$ via the group homomorphism
sending $1$ to $(1,g)$. 
Recall that for a discrete group $G$
we have $H_*(G,k)\cong H_*(BG;k)$, where $BG$ is the classifying
space of $G$. On the other hand,
we regard $\hat C_G(g)$ as a one
dimensional Lie group, so we need to use classifying
space homology, and we have
\[ \HC_*(kG)\cong \bigoplus_g H_*(B\hat C_G(g);k). \]
The Connes exact sequence
(Connes \cite{Connes:1985a} \S II.4, 
Loday and Quillen \cite{Loday/Quillen:1984a}) 
connecting Hochschild and cyclic homology
\begin{equation}\label{eq:Connes1}
 \cdots \to \HH_{n+2}(kG) \xrightarrow{I}
\HC_{n+2}(kG)  \xrightarrow{S} \HC_n(kG) 
\xrightarrow{B} \HH_{n+1}(kG) \to \cdots 
\end{equation}
respects the centraliser decomposition (Burghelea~\cite{Burghelea:1985a}), 
and may be described as follows.
Applying the classifying space construction to the 
short exact sequence of Lie groups
\[ 1 \to \bZ \to \bR \times C_G(g) \to \hat C_G(g) \to 1 \]
we obtain a fibration sequence
\begin{equation}\label{eq:fibration} 
S^1 \to BC_G(g) \to B\hat C_G(g),
\end{equation}
where $\bR/\bZ=S^1=B\bZ$.
The (co-)homology of $S^1$ is $\bZ$ in degree $0$ and $1$ and vanishes 
in all other degrees. In particular, $H_1(S^1;\bZ)=$ 
$H_1(B\bZ;\bZ)\cong \bZ$, and we write $\nu$ for the element of 
$H_1(S^1;\bZ)$ corresponding to $1\in\bZ$. 
Dually, we write $\mu$ for the element of
$H^1(S^1;\bZ)=H^1(B\bZ;\bZ)\cong \Hom(\bZ,\bZ)\cong \bZ $
representing the identity element.
We use the same letters $\mu$ and $\nu$ for their images in $H^1(\bZ,k)$ 
and $H_1(\bZ,k)$. 

The K\"unneth formula yields a canonical identification 
\[ H_n(\bZ\times G,k)= H_0(\bZ,k) \tenk H_n(G,k) \ \oplus\  
H_1(\bZ,k)\tenk H_{n-1}(G,k) \] 
(see Weibel~\cite{Weibel:1994a}, Theorem~3.6.1), where the
summand $H_0(\bZ,k)\tenk H_n(G,k)$ is equal to the image of 
$H_n(G,k)$ under the map induced by the canonical inclusion $G\to$ 
$\bZ\times G$.  Using $\nu$ yields an identification 
\[ H_n(\bZ\times G,k) = H_n(G,k) \oplus H_{n-1}(G,k). \]
Similarly, using $\mu$ yields an identification
\[ H^n(\bZ\times G,k)= H^n(G,k) \oplus H^{n-1}(G,k), \]
where $H^n(G,k)$ is indentified to its image via the map induced by the 
canonical projection $\bZ\times G\to$ $G$. 
See Proposition \ref{prop:HZG} for more details on this identification.

The Serre spectral sequence of the fibration~\eqref{eq:fibration} has two 
non-vanishing rows, and therefore induces a long exact sequence
\begin{multline}\label{eq:Gysin}
\cdots \to H_{n+2}(BC_G(g);k) \xrightarrow{I} H_{n+2}(B\hat C_G(g);k) \\
\xrightarrow{S} H_n(B\hat C_G(g);k) \xrightarrow{B} H_{n+1}(BC_G(g);k)
\to \cdots 
\end{multline}
where we have used $\nu$ to identify $E^2_{*,1}$ with
$H_*(B\hat C_G(g);k)$. Note that the map $I$ is induced by the
inclusion $BC_G(g) \to B\hat C_G(g)$.

It was observed by Burghelea~\cite{Burghelea:1985a} that the Connes 
sequence is the direct sum of these sequences. Note that the maps in 
these sequences depend only on $g$ and $C_G(g)$, but not on $G$ itself. 
So in the following theorem, we assume that $g$ is central in $G$, and 
we identify $H_n(\bZ\times G,k)$ with $H_n(G,k) \oplus H_{n-1}(G,k)$ as 
described above, using the element $\nu\in H_1(\bZ,k)$. 

\begin{theorem}\label{thm:Deltag1}
Let $G$ be a discrete group with a central element $g\in Z(G)$. Consider 
the group homomorphism $z_g\colon \bZ\times G \to G$ which sends $(m,h)$ 
to $g^{m}h$.  The induced map 
\[ (z_g)_*\colon H_n(\bZ\times G,k)=
H_n(G,k) \oplus H_{n-1}(G,k) \to H_n(G,k) \]
has the form $\left(\begin{smallmatrix}1\\ \psi\end{smallmatrix}\right)$ 
where $\psi$ is equal to the composite
\[ H_{n-1}(G,k) \xrightarrow{I} H_{n-1}(\hat G,k) 
\xrightarrow{B} H_n(G,k). \]
\end{theorem}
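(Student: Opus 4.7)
My plan is to analyse $(z_g)_*\colon H_n(\bZ\times G,k)\cong H_n(G,k)\oplus H_{n-1}(G,k)\to H_n(G,k)$ component by component under the K\"unneth decomposition. The first summand is the image of $H_n(G,k)$ under the inclusion $\{0\}\times G\hookrightarrow\bZ\times G$, and since $z_g(0,h)=h$, the first component of $(z_g)_*$ is the identity. The substantive point is the identification of the second component $\psi\colon H_{n-1}(G,k)\to H_n(G,k)$, $\alpha\mapsto(z_g)_*(\nu\otimes\alpha)$, with $B\circ I$.

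The key step is to recognise the map $Bz_g\colon S^1\times BG\to BG$ as, up to homotopy, the principal $S^1$-action $\sigma$ on $BG$ associated to the bundle $BG\to B\hat G$ of Section~2. Since $BG$ is a $K(G,1)$, both maps are classified up to homotopy by their induced homomorphism $\pi_1(S^1\times BG)=\bZ\times G\to G$. Each restricts to the identity on $G$. On the $\bZ$-factor, $z_g$ sends $n$ to $g^n$ by definition, while $\sigma$ sends the generator of $\pi_1(S^1)$ to the class of the $\sigma$-orbit of the basepoint, which is the fibre of $BG\to B\hat G$ through that basepoint; via the kernel inclusion $\bZ\hookrightarrow\bR\times G$, $n\mapsto(n,g^n)$, followed by the retraction $\bR\times G\to G$ contracting the $\bR$-factor, this fibre is identified with the cyclic subgroup generated by $g$, so the generator again maps to $g$. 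Hence both maps induce the same homomorphism on $\pi_1$, and $Bz_g\simeq\sigma$.

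The result now follows from a naturality argument for the Gysin sequences of principal $S^1$-bundles. One views $\sigma$ itself as a morphism of principal $S^1$-bundles, from the trivial bundle $\pi_2\colon S^1\times BG\to BG$ (with $S^1$ acting by translation on the first factor) to $p\colon BG\to B\hat G$, covering $p$ on the base: the total-space square commutes because $p\circ\sigma=p\circ\pi_2$ by the defining property of a principal action. Naturality of the Gysin sequence applied to this morphism yields the commutative square
\[
\xymatrix{H_{n-1}(BG) \ar[r]^{\tau'} \ar[d]_{p_*} & H_n(S^1\times BG) \ar[d]^{\sigma_*} \\ H_{n-1}(B\hat G) \ar[r]^{\tau} & H_n(BG)}
\]
where $\tau$ is the Gysin connecting map of the bottom bundle, equal to the Connes operator $B$ by the discussion preceding the theorem, and $\tau'$ is that of the top bundle. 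Since the top bundle is trivial, $\tau'(\alpha)=\nu\otimes\alpha$ under the K\"unneth decomposition, and $p_*=I$ by its definition as the map induced by the inclusion $BG\hookrightarrow B\hat G$. Substituting gives $\sigma_*(\nu\otimes\alpha)=B(I(\alpha))$, which is the desired formula.

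The main obstacle is the homotopy identification $Bz_g\simeq\sigma$, where the orientation of the $S^1$-action and the choice of generator $\nu\in H_1(\bZ,k)$ must be kept consistent with the kernel embedding $n\mapsto(n,g^n)$ used to define $\hat G$, so that no spurious sign is introduced. Once these signs are settled the fundamental-group calculation gives the homotopy, and naturality of the Gysin sequence closes the argument.
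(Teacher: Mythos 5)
Your argument is correct and follows essentially the same route as the paper: both reduce to naturality of the Gysin (two-row Serre) sequence for the map of $S^1$-fibrations from the trivial bundle $S^1\times BG\to BG$ to $BG\to B\hat G$ with $Bz_g$ on total spaces, and read off $\psi=B\circ I$ from the commutativity of the resulting square. The only cosmetic difference is that the paper produces this map of fibrations by applying the classifying-space functor to an explicit commutative diagram of group extensions, whereas you identify $Bz_g$ with the principal $S^1$-action by classifying maps into the $K(G,1)$ via $\pi_1$.
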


\begin{proof}
The map $G\to \bZ\times G$ sending $h$ to $(0,h)$ is a section both of 
$z_g$ and of the canonical projection $\bZ\times G\to G$, so $(z_g)_*$ 
has the form 
$\left(\begin{smallmatrix}1\\ \psi\end{smallmatrix}\right)$.

Consider the diagram of groups
\[ \xymatrix{1\ar[r] &\bZ \ar[r] \ar@{=}[d] & 
\bZ\times G \ar[r] \ar[d] & G \ar[r] \ar[d] & 1 \\
1 \ar[r] & \bZ \ar[r] & \bR\times G \ar[r] & 
\hat G \ar[r] & 1 .}  \]
Here, the upper sequence is just the direct product sequence, and the 
lower sequence is the defining sequence for $\hat G$. The middle
vertical map sends $(n,h)$ to $(n,z_g(n,h))=(n,g^{n}h)$, while the right 
hand vertical map is the inclusion. 
It is easy to check that the two squares commute.
Taking classifying spaces, we have a map of fibrations
\[ \xymatrix{S^1 \ar[r] \ar@{=}[d] & S^1\times BG \ar[r] \ar[d]^{(z_g)_*} 
& BG \ar[d] \\
S^1 \ar[r] & BG \ar[r] & B\hat G ,} \]
and hence a map of long exact sequences
\[ \xymatrix{\cdots\ar[r]^(0.3){0} &H_{n-1}(BG;k) 
\ar[r]^(0.35){\left(\begin{smallmatrix}0\\1\end{smallmatrix}\right)} 
\ar[d]^I & H_{n}(BG;k) \oplus H_{n-1}(BG;k) \ar[r]^(0.65){(1,0)} 
\ar[d]^{(1, \psi)}& H_{n}(BG;k) \ar[d]^I  \ar[r]^(0.6){0} & \cdots \\
\cdots \ar[r] & H_{n-1}(B\hat G;k) \ar[r]^B & H_{n}(BG;k) \ar[r]^I & 
H_{n}(B\hat G;k) \ar[r] & \cdots} \]
The commutativity of the left hand square proves the theorem.
\end{proof}

\section{The BV operator $\Delta$ on $\HH^*(kG)$}

For any algebra $\Lambda$ which is projective as a module over the 
coefficient ring $k$, to set up a duality between Hochschild homology 
and cohomology, we need to use coefficients $\Lambda$ in homology and 
$\Lambda^\vee=\Hom_k(\Lambda,k)$ in cohomology. This is because if 
$P_*$ is a projective resolution of $\Lambda$ as a 
$\Lambda$-$\Lambda$-bimodule, then
\[\Hom_{\Lambda^e}(P_*,\Lambda^\vee) \cong 
\Hom_k(\Lambda\otimes_{\Lambda^e}P_*,k). \]
Thus if $k$ is a field, then $\HH^n(\Lambda,\Lambda^\vee)\cong
(\HH_n(\Lambda,\Lambda))^\vee$,
but for example for $k=\bZ$ we have a universal coefficient sequence
\[ 0 \to \Ext^1_\bZ(\HH_{n-1}(\Lambda,\Lambda),\bZ) \to
  \HH^n(\Lambda,\Lambda^\vee) \to \Hom_\bZ(\HH_n(\Lambda,\Lambda),\bZ)
  \to 0. \]
Similarly, cyclic cochains on $\Lambda$ are described in terms of the 
Hochschild cochain complex for $\Lambda$ with coefficients in 
$\Lambda^\vee$, and are dual to cyclic chains described in terms of 
the Hochschild chain complex for $\Lambda$ with coefficients in 
$\Lambda$. The Connes sequence for cyclic cohomology takes the form
\begin{equation}\label{eq:Connes2}
\cdots \to \HH^{n+1}(\Lambda,\Lambda^\vee) \xrightarrow{B}
\HC^n(\Lambda) \xrightarrow{S} \HC^{n+2}(\Lambda) 
\xrightarrow{I} \HH^n(\Lambda,\Lambda^\vee) \to \cdots.
\end{equation}

For $g$ an element in a discrete group $G$, the Serre spectral 
sequence in cohomology of the fibration~\eqref{eq:fibration} induces 
a long exact sequence
\[ \cdots \to H^{n+1}(BC_G(g);k) \xrightarrow{B}
H^n(B\hat C_G(g);k) \xrightarrow{S}
H^{n+2}(B\hat C_G(g);k) \xrightarrow{I}
H^{n+2}(BC_G(g);k) \to \cdots \]
where we have used the class $\mu\in$ $H^1(\bZ,k)$ introduced in the previous
section  to identify $E_2^{*,1}$ with
$H^*(B\hat C_G(g);k)$. Note that the map $I$ is induced by 
$BC_G(g) \to B\hat C_G(g)$.

\begin{theorem}\label{thm:HH*Connes}
Let $G$ be a discrete group and $k$ a commutative ring of
coefficients. Then we have a centraliser decomposition
\[ \HH^*(kG,kG^\vee) \cong \prod_g H^*(C_G(g),k), \]
where $g$ runs over a set of representatives of the conjugacy
classes in $G$. The Connes sequence is a direct product of the 
sequences
\[ \cdots \to H^{n+1}(BC_G(g);k) \xrightarrow{B}
H^n(B\hat C_G(g);k) \xrightarrow{S}
H^{n+2}(B\hat C_G(g);k) \xrightarrow{I}
H^{n+2}(BC_G(g);k) \to \cdots \]
\end{theorem}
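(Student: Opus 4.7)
The plan is to dualize the results of Section~2 at the chain level. The starting observation, noted at the beginning of Section~3, is that there is a natural isomorphism of cochain complexes
\[ C^*(kG, kG^\vee) \cong \Hom_k(C_*(kG, kG), k), \]
so both the centraliser decomposition and the Connes sequence for cohomology will be obtained by applying $\Hom_k(-, k)$ to the corresponding chain-level structures used in Section~2.

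For the decomposition itself, I would invoke the underlying chain-level refinement of Burghelea's result: the Hochschild chain complex $C_*(kG, kG)$ splits as a direct sum of subcomplexes $C_*^{(g)}$ indexed by the conjugacy classes of $G$, and each $C_*^{(g)}$ is chain-homotopy equivalent to the standard complex computing $H_*(C_G(g), k)$. Since these are bounded-below complexes of free $k$-modules, the homotopy equivalences are preserved by $\Hom_k(-, k)$, and direct sums are converted into direct products. Taking cohomology of each factor then gives $H^*(C_G(g), k)$, yielding
\[ \HH^*(kG, kG^\vee) \cong \prod_g H^*(C_G(g), k). \]

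For the second statement, I would apply the same strategy to the Connes bicomplex. Burghelea's decomposition is compatible with the $(B, S, I)$-structure and identifies the $g$-summand of the homology Connes sequence~\eqref{eq:Connes1} with the Gysin long exact sequence~\eqref{eq:Gysin} arising from the fibration $S^1 \to BC_G(g) \to B\hat C_G(g)$. The Serre spectral sequence of this fibration has only two non-vanishing rows in both homology and cohomology, so exactly the same Gysin pattern appears in cohomology once we use $\mu$ in place of $\nu$. Dualizing the chain-level decomposition with $\Hom_k(-, k)$ then presents the cohomology Connes sequence~\eqref{eq:Connes2} as the direct product of these cohomological Gysin sequences.

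The main obstacle is ensuring that the dualization step does not introduce Ext corrections of the kind appearing in the universal coefficient sequence displayed at the start of Section~3. This is precisely why the splitting has to be carried out at the level of chain complexes rather than merely at the level of (co)homology: once the Hochschild chain complex itself decomposes as a direct sum of bounded-below complexes of free $k$-modules, $\Hom_k(-, k)$ acts termwise as an exact functor that converts the homology decomposition and the homology Connes sequence into their cohomology counterparts without any derived correction terms, valid uniformly over an arbitrary commutative ring $k$.
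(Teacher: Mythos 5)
Your proposal is correct and is essentially the paper's own argument: the paper proves this theorem by citing \cite[Theorem~2.11.2]{Benson:1998c} for the decomposition and observing that the Connes statement is ``dual to Burghelea~\cite{Burghelea:1985a}, with essentially the same proof,'' which is exactly the chain-level dualization you carry out. Your closing remark --- that the splitting must be performed on complexes of free $k$-modules before applying $\Hom_k(-,k)$, so that no $\Ext$ correction terms arise and direct sums become direct products --- is the right way to make that one-line citation precise over an arbitrary commutative ring.
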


\begin{proof}
For the centraliser decomposition see~\cite[Theorem~2.11.2]{Benson:1998c}.
The statement on the  Connes sequence is dual to Burghelea~\cite{Burghelea:1985a}, 
with essentially the same proof. See also Sections~2.11 to~2.15 of~\cite{Benson:1998c}.
\end{proof}

If $\Lambda$ is a symmetric algebra over $k$, finitely generated and 
projective as a $k$-module, then $\Lambda^\vee$ is isomorphic to 
$\Lambda$ as a $\Lambda$-$\Lambda$-bimodule, but the isomorphism 
depends on the choice of symmetrising form.

From now on, we assume that $G$ is a finite group.
Then for $kG$, there is a canonical symmetrising form. This is the 
bilinear pairing $kG \tenk kG \to k$ which sends $g\otimes g'$ to 
$1\in k$ if $gg'=1$ and zero otherwise. Using this bilinear pairing, 
we obtain an isomorphism between $kG^\vee$ and $kG$, and hence between
$\HH^*(kG,kG^\vee)$ and $\HH^*(kG,kG)$. From now on, we write 
$\HH^*(kG)$ for $\HH^*(kG,kG)$. This is a graded commutative ring, 
whose product structure was elucidated by 
Siegel and Witherspoon~\cite{Siegel/Witherspoon:1999a}.

\begin{defn}[{cf.  \cite{Burghelea:1985a},  \cite{Tradler:2008a}}] \label{def:Delta}
For $G$ finite, we define the operator 
$\Delta\colon \HH^n(kG)\to \HH^{n-1}(kG)$
to be the map induced by $I\circ B$ under the isomorphism
between $\HH^*(kG)$ and $\HH^*(kG,kG^\vee)$ given
by the symmetrising form on $kG$ described above.
\end{defn}

\begin{theorem}\label{th:(1,Delta)}
For $G$ a finite group, we have a centraliser decomposition
\[ \HH^*(kG) \cong \bigoplus_g H^*(C_G(g),k). \]
The map $\Delta$ preserves the centraliser decomposition
of $\HH^*(kG)$, and is the sum of the maps $\Delta_g=I\circ B\colon
H^*(C_G(g),k)\to H^{*-1}(C_G(g),k)$.
\end{theorem}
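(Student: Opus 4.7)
The plan is to assemble the theorem from pieces already established. First I would note that Theorem~\ref{thm:HH*Connes} gives the centraliser decomposition of $\HH^*(kG,kG^\vee)$ as a direct product indexed by conjugacy classes. Because $G$ is finite, there are only finitely many conjugacy classes, so the product is canonically a direct sum. Transporting along the bimodule isomorphism $kG^\vee\cong kG$ afforded by the standard symmetrising form then yields the asserted decomposition of $\HH^*(kG)$. The one point that requires care is that this transport is compatible with the centraliser indexing: one verifies that under the symmetrising-form identification, the $g$-component on the $kG^\vee$ side corresponds to the component at $g^{-1}$ (or at $g$, depending on normalisation) on the $kG$ side, so that after reindexing by conjugacy classes one recovers exactly the decomposition of Theorem~\ref{thm:HH*Connes}. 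Since inversion permutes conjugacy classes, and $C_G(g)=C_G(g^{-1})$, the resulting decomposition still takes the stated form.

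Next I would argue that $\Delta$ respects this decomposition. By Definition~\ref{def:Delta}, $\Delta$ is, up to the symmetric-form identification, equal to the composite $I\circ B$ in the Connes sequence~\eqref{eq:Connes2} for $\Lambda=kG$. By Theorem~\ref{thm:HH*Connes}, this Connes sequence is a direct product (hence direct sum, since $G$ is finite) of the Gysin-type long exact sequences associated to the fibrations
\[ S^1 \to BC_G(g) \to B\hat C_G(g). \]
In particular the operators $B$ and $I$ each preserve the centraliser decomposition, and therefore so does their composition. Restricting to the summand indexed by $g$, the operator $\Delta$ acts as the composite
\[ H^n(C_G(g),k) \xrightarrow{B} H^{n-1}(B\hat C_G(g);k) \xrightarrow{I} H^{n-1}(C_G(g),k), \]
which is exactly the definition of $\Delta_g$.

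The main obstacle, then, is essentially bookkeeping: verifying that the symmetrising-form isomorphism $kG^\vee\cong kG$ matches the two centraliser decompositions summand for summand, including the correct indexing. Once this compatibility is in hand, the rest of the proof is a direct invocation of Theorem~\ref{thm:HH*Connes} together with the definition of $\Delta$.
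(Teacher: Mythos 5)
Your proposal is correct and follows essentially the same route as the paper: both invoke Theorem~\ref{thm:HH*Connes} for the centraliser decomposition and for the fact that the Connes sequence splits as a (finite, hence direct-sum) product over conjugacy classes, and then read off that the component of $\Delta=I\circ B$ at $g$ is $\Delta_g$. Your additional remark about checking the indexing under the symmetrising-form identification $kG^\vee\cong kG$ is a sensible piece of bookkeeping that the paper leaves implicit.
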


\begin{proof}
The centraliser decomposition in the finite case 
comes from Theorem~\ref{thm:HH*Connes}, 
see also Siegel and Witherspoon~\cite{Siegel/Witherspoon:1999a}.
Also by Theorem \ref{thm:HH*Connes}, the Connes sequence is
a direct sum of the Connes sequences for each centraliser, and
hence the components $\Delta_g$ of $\Delta$ are as stated.
\end{proof}

\begin{defn}
A \emph{Gerstenhaber algebra} is a graded $k$-algebra $R$
which is associative and graded commutative:
\[ yx = (-1)^{|x||y|}xy \]
and has a Lie bracket $[x,y]$ of degree $-1$ which satisfies
\begin{enumerate}
\item[\rm (i)] Anticommutativity:
\[ [y,x]=- (-1)^{(|x|-1)(|y|-1)}[x,y] \]
\item[\rm (ii)] The graded Jacobi identity holds:
\[ [[x,y],z]] = [x,[y,z]] - (-1)^{(|x|-1)(|y|-1)}[y,[x,z]] \]
\item[\rm (iii)] The bracket is a derivation with respect to the
  product (Leibniz identity):
\[ [x,yz] =[x,y]z+(-1)^{(|x|-1)|y|}y[x,z] \]
\end{enumerate}
\end{defn}

\begin{defn}
A \emph{Batalin--Vilkovisky algebra} (or BV algebra) is a Gerstenhaber 
algebra together with an operator $\Delta$ of degree $-1$ satisfying 
$\Delta \circ \Delta =0$ and 
\[ [x,y]=(-1)^{|x|} \Delta(xy) -(-1)^{|x|}(\Delta x)y - x(\Delta  y). \]
Thus the multiplication and the BV operator $\Delta$ determine the Lie 
bracket.
\end{defn}

For background on BV algebra 
structures in Hochschild cohomology and relationship with loop space 
topology, see Abbaspour \cite{Abbaspour:2015a}. It is pointed out
in Remark 5.1 of Rubio y Degrassi~\cite{RubioyDegrassi:2017a} 
that the restricted Lie algebra structure cannot in general be read off from the
BV-algebra structure. 

\begin{prop}[{Tradler~\cite{Tradler:2008a},  Menichi~\cite{Menichi:2004a}}]
The operator $\Delta$ of Definition~\ref{def:Delta}  
defines a BV operator on $\HH^*(kG)$
making it a BV algebra in which the Lie bracket $[-,-]$ is 
the Gerstenhaber bracket in Hochschild cohomology.
\end{prop}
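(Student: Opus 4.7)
The plan is to establish the proposition by invoking the general result of Tradler that the Hochschild cohomology of any symmetric $k$-algebra carries a BV structure whose underlying bracket is the Gerstenhaber bracket, and to indicate the key ingredients needed to verify its hypotheses in the case $A=kG$.

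First I would check the symmetric algebra hypothesis. The standard form $\lambda\colon kG\tenk kG\to k$ with $\lambda(g\ten h)=1$ if $gh=1$ and zero otherwise is symmetric and nondegenerate, giving a $kG$-$kG$-bimodule isomorphism $kG \cong kG^\vee$. Under the resulting identification $\HH^*(kG)\cong \HH^*(kG,kG^\vee)$, the operator $\Delta$ of Definition~\ref{def:Delta} is precisely the composite $I\circ B$ read off from the Connes exact sequence \eqref{eq:Connes2}.

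The easier half of the proposition is $\Delta\circ\Delta=0$. Writing $\Delta=I\circ B$ and using exactness of the Connes sequence at $\HC^*$, the middle composition $B\circ I$ vanishes identically, so $\Delta^2 = I\circ(B\circ I)\circ B = 0$. The remaining, nontrivial content is the BV identity
\[ [x,y]=(-1)^{|x|}\Delta(xy)-(-1)^{|x|}\Delta(x)\,y - x\,\Delta(y). \]
For this, I would work with the normalised Hochschild cochain complex $C^*(kG,kG^\vee)\cong \Hom_k((kG)^{\ten(*+1)},k)$, where the symmetrising form equips $C^n$ with a natural cyclic action of order $n+1$. Using this action one constructs a cochain-level operator of the form
\[ (\Delta f)(a_1,\dots,a_{n-1}) = \sum_{i=0}^{n-1} \pm\, f(1,a_i,\dots,a_{n-1},a_1,\dots,a_{i-1}) \]
with signs dictated by the cyclic and bar conventions, which represents $I\circ B$ on cohomology. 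One then verifies the BV identity on cochains modulo coboundaries by expanding both sides via Gerstenhaber brace operations and cyclic rotations, and matching the terms combinatorially.

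The main obstacle is the last step: the explicit cochain-level comparison requires careful sign bookkeeping across cyclic permutations, bar differentials, and the brace operations defining the Gerstenhaber bracket. This is the technical heart of Tradler's argument, and a self-contained proof essentially reproduces it. An alternative route, which is Menichi's, bypasses the direct verification by identifying $\HH^*(kG,kG^\vee)$ with a suitable $k$-cohomology of the free loop space of $BG$ and importing the BV structure from Chas--Sullivan string topology; the cost is then a comparison theorem showing that the topologically defined $\Delta$ agrees with the algebraic one given by $I\circ B$. Either route leads to the stated result.
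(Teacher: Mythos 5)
Your proposal is correct and matches the paper's treatment: the paper states this result without proof, attributing it to Tradler and Menichi, and your argument likewise reduces the content to those references after correctly verifying the symmetric-form hypothesis and deducing $\Delta\circ\Delta=0$ from exactness of the Connes sequence ($B\circ I=0$). The genuinely hard part, the BV identity relating $\Delta$ to the Gerstenhaber bracket, is deferred to Tradler's cochain-level computation in both your sketch and the paper, so the approaches are essentially the same.
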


\begin{remark}
Tradler gives the following formula for $\Delta$ at the level of 
Hochschild cochains, on an algebra $\Lambda$ with a symmetric, 
invariant, non-degenerate bilinear form  
$\langle -,-\rangle \colon \Lambda \times \Lambda \to k$. 
For $f\in C^n(\Lambda,\Lambda)$, define
$\Delta f\in C^{n-1}(\Lambda,\Lambda)$ by
\[ \langle \Delta f(a_1,\dots,a_{n-1}),a_n\rangle =
\sum_{i=1}^n (-1)^{i(n-1)} \langle
f(a_i,\dots,a_{n-1},a_n,a_1,\dots,a_{i-1}),1\rangle. \]
Note that this formula depends on the choice of the above
symmetrizing form. 

Explicit calculations of the BV structure on Hochschild cohomology
of finite groups have been made in a number of
different cases, see the references in Section~\ref{ExamplesSection}.
Also relevant are the papers of Le and Zhou 
\cite{Le/Zhou:2014a}, and Volkov \cite{Volkov:2016a}.
For $k$ a field, Liu and Zhou \cite{Liu/Zhou:2016a} have given an 
explicit description of the BV operator on $\HH^*(kG)$ in terms 
of Hochschild cochains, combining the centraliser decomposition and
Tradler's description of the BV operator in \cite{Tradler:2008a}.  
\end{remark}

\section{The map $\Delta_g$}

In this section, we dualise the construction in
Theorem~\ref{thm:Deltag1}  and use it to describe the components  
\[ \Delta_g\colon H^n(G,k) \to H^{n-1}(G,k) \]
of  the BV operator $\Delta$, where $G$ is a finite group.
As mentioned previously, for $g\in Z(G)$, the long exact sequence in 
cohomology for the fibration  
\[ S^1 \to BG \to B\hat G \]
has the form
\begin{equation}\label{eq:Gysin2}
\cdots \to H^{n+1}(BG;k) \xrightarrow{B} H^n(B\hat G;k) 
\xrightarrow{S} H^{n+2}(B\hat G;k) \xrightarrow{I} H^{n+2}(BG;k) \to
\cdots
\end{equation}
where we have used the class $\mu\in$ $H^1(\bZ,k)$ introduced in Section 2
 to identify $E_2^{*,1}$ with 
$H^*(B\hat G;k)$. Again, the map $I$ is induced by $BG \to B\hat G$.
The following theorem describes the map 
$\Delta_g\colon H^n(G,k) \to H^{n-1}(G,k)$
coming from the element $g\in Z(G)$.

\begin{theorem}\label{thm:Deltag}
Let $G$ be a finite group and let $g\in Z(G)$.
Consider the group homomorphism $z_g\colon \bZ\times
G \to G$ which sends $(m,h)$ to $g^mh$.  The induced map
\[ (z_g)^*\colon H^n(G,k) \to H^n(\bZ\times G,k)= H^n(G,k) \oplus
  H^{n-1}(G,k) \]
has the form $(1,\Delta_g)$, where $\Delta_g=I\circ B$ is equal to the
composite
\[ H^n(G,k)\xrightarrow{B} H^{n-1}(\hat G,k) \xrightarrow{I}
  H^{n-1}(G,k). \]
\end{theorem}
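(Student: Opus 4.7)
The plan is to mirror the argument of Theorem~\ref{thm:Deltag1}, but in cohomology. The proof splits into two parts: first establish that the first coordinate of $(z_g)^*$ is the identity by a section argument, then identify the second coordinate with $I\circ B$ via a map of Gysin-type long exact sequences.

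For the first coordinate, consider the section $s\colon G\to \bZ\times G$ defined by $h\mapsto (0,h)$. This is a section of both $z_g$ (since $g^0h=h$) and the canonical projection $\pi\colon \bZ\times G\to G$. Under the identification $H^n(\bZ\times G,k)=H^n(G,k)\oplus H^{n-1}(G,k)$, which writes a class as $\pi^*(\alpha)+\mu\cdot\pi^*(\beta)$, the map $s^*$ is the projection onto the first summand, because the composite $G\to \bZ\times G\to \bZ$ is constant and hence $s^*(\mu)=0$. The relation $s^*\circ (z_g)^*=\mathrm{id}$ then forces $(z_g)^*$ to have the form $(1,\psi)$ for some operator $\psi\colon H^n(G,k)\to H^{n-1}(G,k)$.

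To identify $\psi$ with $I\circ B$, I would reuse the commutative diagram of groups from the proof of Theorem~\ref{thm:Deltag1}, relating the direct product extension $1\to\bZ\to\bZ\times G\to G\to 1$ to the defining extension $1\to\bZ\to\bR\times G\to \hat G\to 1$ via the middle map $(n,h)\mapsto(n,g^nh)$ and the canonical inclusion on the right. Applying the classifying space functor yields a map of $S^1$-fibrations, and applying cohomology then yields a map between two Gysin-type long exact sequences: the upper one is the split K\"unneth sequence for the trivial fibration (whose Euler class vanishes, so the connecting maps are zero), and the lower one is~\eqref{eq:Gysin2} for $B\hat G$. The vertical map at the total-space terms is $(z_g)^*=(1,\psi)$, while at the base terms it is $I$. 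Commutativity of the square whose bottom edge is $B\colon H^n(G,k)\to H^{n-1}(\hat G,k)$ and whose top edge is the projection $(0,1)\colon H^n(G,k)\oplus H^{n-1}(G,k)\to H^{n-1}(G,k)$ then reads $\psi = I\circ B$, which is $\Delta_g$ by Theorem~\ref{th:(1,Delta)}.

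The only step requiring real care is tracking the orientation convention for $\mu\in H^1(\bZ,k)$: one must ensure that the identification of $H^n(\bZ\times G,k)$ using $\mu$ matches the normalisation of the edge maps in the Serre spectral sequence that produces~\eqref{eq:Gysin2}, so that the two long exact sequences align with matching signs. This is a routine compatibility check rather than a conceptual obstacle; once it is fixed, the proof reduces to the formal diagram chase outlined above.
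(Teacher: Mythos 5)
Your proposal is correct and is essentially the paper's own proof: the paper disposes of this theorem in one line by declaring it ``dual to the proof of Theorem~\ref{thm:Deltag1}'', and what you have written is precisely that dualisation carried out in detail --- the section argument for the first coordinate, the map of $S^1$-fibrations obtained from the same diagram of group extensions, and the induced map of Gysin sequences identifying $\psi$ with $I\circ B$. Your closing remark about matching the normalisation of $\mu$ with the edge maps of the Serre spectral sequence is exactly the compatibility the paper sets up when it fixes $\mu$ and uses it to identify $E_2^{*,1}$ with $H^*(B\hat G;k)$ in~\eqref{eq:Gysin2}.
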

\begin{proof}
The proof is dual to the proof of Theorem~\ref{thm:Deltag1}.
\end{proof}

The following corollary proves the properties (i), (iii), (iv), 
(vi), and (viii) in Theorem~\ref{th:main}.

\begin{cor}\label{co:derivation}
Let $G$ be a finite group and let $g\in Z(G)$. Then 
\begin{enumerate}
\item The map
\[ \Delta_g\colon H^*(G,k) \to H^{*-1}(G,k) \]
is a derivation with respect to the ordinary cohomology cup product.
In particular, $\Delta_g$ is determined by its values on a
generating set of $H^*(G, k)$. 
\item The map $\Delta_g$ is natural with respect to group
  homomorphisms, and with respect to homomorphisms of
coefficient rings.
\item If $H$ is a subgroup of $G$ containing $g$ then $\Delta_g$
commutes with the transfer map and the restriction map.
\item
In the case $k=\bF_p$, the map $\Delta_g$ commutes with the Steenrod 
operations and with the Bockstein homomorphism.
\end{enumerate}
\end{cor}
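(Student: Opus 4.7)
The strategy throughout is to exploit Theorem~\ref{thm:Deltag}, which identifies $(z_g)^{*}$ with the map $x \mapsto x + \mu\,\Delta_g(x)$ under the Künneth identification $H^{n}(\bZ\times G,k)\cong H^{n}(G,k)\oplus\mu\cdot H^{n-1}(G,k)$. Since $z_g$ is a group homomorphism, $(z_g)^{*}$ is automatically a graded ring map that is natural in $G$, in $k$, and commutes with transfer, Steenrod squares, and the Bockstein. All four items are then obtained by reading off the ``$\mu$-component'' on each side of an identity that is already known on the $H^{*}(\bZ\times G,k)$ level.

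For (i), I would compute $(z_g)^{*}(x\cdot y)=(z_g)^{*}(x)\cdot (z_g)^{*}(y)$ and expand in the Künneth decomposition. Using $\mu^{2}=0$ (since $\mu\in H^{1}(\bZ,k)$) and the Koszul sign rule $x\cdot\mu\, y = (-1)^{|x|}\mu\,(x\cdot y)$, the product collapses to
\[
x\cdot y + \mu\bigl(\Delta_g(x)\cdot y + (-1)^{|x|}x\cdot\Delta_g(y)\bigr),
\]
which must equal $x\cdot y + \mu\,\Delta_g(x\cdot y)$. Matching the $\mu$-components yields the derivation identity; the generation statement is an immediate consequence.

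For (ii), given $\phi\colon G\to G'$ with $\phi(g)=g'$, there is a commutative square of groups with $z_{g'}\circ(\Id\times\phi)=\phi\circ z_g$, so naturality of cohomology gives $(z_g)^{*}\circ\phi^{*}=(\Id\times\phi)^{*}\circ(z_{g'})^{*}$. Reading off the $\mu$-component (noting that $(\Id\times\phi)^{*}$ respects the Künneth splitting) gives the naturality of $\Delta_g$ in $G$; the argument for $\rho\colon k\to k'$ is identical with $\rho_{*}$ in place of $\phi^{*}$.

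For (iii) (commutation with $\Res$ and $\Tr$), I would apply naturality to the inclusion $H\hookrightarrow G$ for the restriction half. For transfer, the key point is that the square with horizontal maps $z_g$ and vertical inclusions $\bZ\times H\hookrightarrow \bZ\times G$, $H\hookrightarrow G$ is a pullback of groups (both indices equal $[G:H]$ and the groups match), so the Mackey/double-coset formula yields $(z_g)^{*}\circ\Tr_{H,G}=\Tr_{\bZ\times H,\bZ\times G}\circ(z_g)^{*}$. Since the transfer $\Tr_{\bZ\times H,\bZ\times G}$ is, under the Künneth identification, simply $\Id\otimes \Tr_{H,G}$ (the first factor is unchanged), extracting the $\mu$-component gives $\Tr_{H,G}\circ\Delta_g=\Delta_g\circ\Tr_{H,G}$. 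For (iv), Steenrod operations and the Bockstein commute with $(z_g)^{*}$ by naturality; applied to $x+\mu\,\Delta_g(x)$, the Cartan formula together with $Sq^{i}\mu=0$ for $i\ge 1$ (by degree, using $Sq^{1}\mu=\mu^{2}=0$) and $\beta(\mu)=0$ (since $\mu$ is the mod-$p$ reduction of an integral class) collapses the formula to $Sq^{i}(x)+\mu\cdot Sq^{i}(\Delta_g(x))$, and similarly for $\beta$; matching $\mu$-components yields the claim. The main subtlety in the whole argument is sign bookkeeping in step (i) and verifying that the transfer in step (iii) behaves diagonally under Künneth, so those are the two places I would be most careful.
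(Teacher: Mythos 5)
Your proposal is correct and is essentially the paper's own proof: part (i) is established there by exactly your computation (expanding the ring homomorphism $(z_g)^*$ on $x\cdot y$, using $\mu^2=0$ and the Koszul sign, and reading off the $\mu$-component), and for (ii)--(iv) the paper simply states that they ``follow directly from $(z_g)^*=(1,\Delta_g)$'', which is precisely the naturality argument you spell out (including the pullback/Mackey step for the transfer and the Cartan formula with $Sq^i\mu=0$, $\beta\mu=0$ for the Steenrod operations). The only point worth flagging is that for odd $p$ the derivation sign in $\beta(\mu\otimes y)=-\mu\otimes\beta(y)$ makes the commutation versus anticommutation with the Bockstein a matter of sign conventions, an issue the paper itself only addresses later in Remark~\ref{signconventions}.
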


\begin{proof}
The map 
\[ (z_g)^*\colon H^*(BG,k) \to H^*(S^1 \times BG,k)=
H^*(S^1,k) \otimes_k H^*(BG,k) \]
is a $k$-algebra homomorphism.
Let $\mu$ be the generator for $H^1(S^1,k)$ chosen so that
for $x\in H^*(BG,k)$ we have
\[ (z_g)^*(x)=1 \otimes x + \mu \otimes \Delta_g(x). \]
Then 
\begin{align*} 
(z_g)^*(x\cdot y)&= 
(1\otimes x+\mu\otimes\Delta_g(x))\cdot 
(1\otimes y+\mu\otimes \Delta_g(y))\\
&=1\otimes x\cdot y + \mu\otimes 
(\Delta_g(x)\cdot y+(-1)^{|x|}x\cdot\Delta_g(y)).
\end{align*}
Examining the coefficient of $\mu$, we see that
\[ \Delta_g(x\cdot y) = \Delta_g(x)\cdot y + (-1)^{|x|} x \cdot
  \Delta_g(y). \]
This proves (i). The naturality statements (ii)--(iv) follow directly 
from the fact that $(z_g)^*=(1,\Delta_g)$.
\end{proof}

For degree one elements, it is easy to describe $\Delta_g$; this is
statement (ix) in Theorem \ref{th:main}.

\begin{prop}\label{pr:deg1}
Let $G$ be a finite group, and let $g\in Z(G)$. Then identifying 
$H^1(G,k)$ with $\Hom(G,k)$, we have 
$$\Delta_g(x)=x(g)$$ 
for any $x\in H^1(G,k)$.
\end{prop}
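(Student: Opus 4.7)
The plan is to compute $(z_g)^*(x)$ directly, using the description of $H^1(-,k)$ as a group of homomorphisms, and then read off the coefficient of $\mu$ in the K\"unneth decomposition; by Theorem~\ref{thm:Deltag}, that coefficient is exactly $\Delta_g(x)$.

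First, under the canonical isomorphism $H^1(G,k)\cong \Hom(G,k^+)$ (where $k^+$ denotes the additive group of $k$; the isomorphism is valid for any discrete group), a class $x\in H^1(G,k)$ is identified with a group homomorphism $G\to k^+$, and pullback along a group homomorphism is just precomposition. Since $g$ is central and $k^+$ is abelian, we obtain the homomorphism
\[ (z_g)^*(x)\colon \bZ\times G \to k^+,\qquad (m,h)\mapsto x(g^m h)= m\cdot x(g) + x(h). \]

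Second, the K\"unneth formula identifies $H^1(\bZ\times G,k)$ with $\Hom(G,k^+)\oplus \Hom(\bZ,k^+)$: the first summand embeds by pullback along the projection $\bZ\times G\to G$, the second by pullback along the projection $\bZ\times G\to \bZ$. The class $\mu\in H^1(\bZ,k)$ chosen in Section~2 corresponds, under $H^1(\bZ,k)=\Hom(\bZ,k^+)$, to the homomorphism $m\mapsto m\cdot 1_k$. Splitting the expression above accordingly, $(z_g)^*(x)$ decomposes as $x$ in the $H^1(G,k)$-summand, together with $\mu\cdot x(g)$ where $x(g)\in k=H^0(G,k)$; equivalently, under the identification $H^1(\bZ\times G,k)=H^1(G,k)\oplus H^0(G,k)$, it equals $(x,x(g))$. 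Comparing with $(z_g)^*(x)=(x,\Delta_g(x))$ from Theorem~\ref{thm:Deltag} yields $\Delta_g(x)=x(g)$.

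There is no serious obstacle; the proof amounts to unwinding the definitions. The only point requiring care is that the element $\mu$ used in Section~2 to make the identification $H^n(\bZ\times G,k)\cong H^n(G,k)\oplus H^{n-1}(G,k)$ is precisely the homomorphism $1\mapsto 1_k$, so that the coefficient of $\mu$ in the additive term $m\mapsto m\cdot x(g)$ is literally the element $x(g)\in k$.
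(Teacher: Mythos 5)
Your proposal is correct and follows essentially the same route as the paper: the paper's one-line proof simply observes that the composite $\bZ\times G\xrightarrow{z_g}G\xrightarrow{x}k$ equals $1\otimes x+\mu\otimes x(g)$, which is exactly the computation you carry out in more detail before invoking Theorem~\ref{thm:Deltag}. Your extra care about which generator $\mu$ is used in the K\"unneth identification is the right point to be careful about, but it is the same argument.
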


\begin{proof}
The composite $\bZ\times G\xrightarrow{z_g}G\xrightarrow{x}k$
is equal to $1\otimes x + \mu \otimes x(g)$.
\end{proof}

\begin{remark}
It follows from Proposition \ref{pr:deg1} that if $g$ is in the  subgroup 
generated by commutators and $p$-th powers of elements in $G$,
then $\Delta_g$ is identically zero on $H^1(G,k)$.
\end{remark}

The next result is statement (x) in Theorem \ref{th:main}. 

\begin{prop}\label{pr:deg2}
Let $G$ be a finite group and let $g\in Z(G)$. Then the map $\Delta_g$ 
vanishes on the image of 
\[  H^2(G,\bZ)\to H^2(G,k). \]
\end{prop}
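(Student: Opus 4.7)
The plan is to reduce everything to the observation that $H^1(G,\bZ)=0$ for a finite group $G$, combined with naturality of $\Delta_g$ with respect to change of coefficients.

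First I would consider the coefficient map $\rho\colon \bZ \to k$ (sending $1$ to $1$) and the induced map on cohomology $\rho_*\colon H^*(G,\bZ) \to H^*(G,k)$. By statement (iv) of Theorem~\ref{th:main} (already established as part of Corollary~\ref{co:derivation}), $\Delta_g$ commutes with $\rho_*$, giving a commutative square
\[ \xymatrix{H^2(G,\bZ) \ar[r]^{\rho_*} \ar[d]_{\Delta_g} & H^2(G,k) \ar[d]^{\Delta_g} \\
H^1(G,\bZ) \ar[r]^{\rho_*} & H^1(G,k).} \]
So any class $x \in H^2(G,k)$ in the image of $\rho_*$ satisfies $\Delta_g(x) = \rho_*(\Delta_g(y))$ for some $y \in H^2(G,\bZ)$.

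Next, I would note that $H^1(G,\bZ) \cong \Hom(G,\bZ)$, and since $G$ is a finite group every homomorphism from $G$ to $\bZ$ is trivial. Hence $H^1(G,\bZ) = 0$, so $\Delta_g(y) = 0$ for all $y \in H^2(G,\bZ)$, and therefore $\Delta_g(x) = 0$. This completes the argument.

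There is no real obstacle here; the statement follows almost immediately once naturality in the coefficient ring is in hand. The only subtlety worth emphasising is that the definition of $\Delta_g$ genuinely makes sense for the coefficient ring $k = \bZ$, so that the vertical map on the left of the square above is well-defined (with target the zero group), which is clear from the construction via $(z_g)^*$ in Theorem~\ref{thm:Deltag}.
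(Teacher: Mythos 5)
Your argument is correct and is essentially identical to the paper's own proof: both use naturality of $\Delta_g$ in the coefficient ring (Theorem~\ref{th:main}\,(iv)) to get the commutative square relating $\bZ$ and $k$ coefficients, and then conclude from $H^1(G,\bZ)=\Hom(G,\bZ)=0$ for $G$ finite. Your closing remark that $\Delta_g$ is genuinely defined over $k=\bZ$ is a worthwhile point to make explicit, since the construction works over an arbitrary commutative ring.
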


\begin{proof}
Consider the commutative diagram
\[ \xymatrix{H^2(G,\bZ) \ar[r]^{\Delta_g} \ar[d] & H^1(G,\bZ) \ar[d] \\
H^2(G,k) \ar[r]^{\Delta_g} & H^1(G,k) . } \]
The proposition now follows from the fact that since $G$ is
finite, we have $H^1(G,\bZ)=\Hom(G,\bZ)=0$.
\end{proof}

\section{Degree $-1$ operators on $\Ext^*_A(U,V)$}

We describe an elementary  construction principle for degree $-1$
operators on $\Ext^*_A(U,V)$ determined by a central element in an 
algebra $A$ which annihilates both modules $U$ and $V$. We use this in 
the next section to interpret the BV operator in terms of this 
construction principle.  Let $k$ be a commutative ring.

\begin{theorem} \label{degreeminusone1}
Let $A$ be a $k$-algebra, let $z\in$ $Z(A)$,
and let $U$, $V$ be $A$-modules. Suppose that $z$ annihilates both $U$ 
and $V$. Let $P=$ $(P_n)_{n\geq 0}$ together with a surjective
$A$-homomorphism $\pi \colon P_0\to U$ be a projective resolution of $U$, 
with differential $\delta=$ $(\delta_n \colon P_n\to P_{n-1})_{n\geq 1}$. 
For notational convenience, set $P_i=0$ for $i<0$ and $\delta_i=0$
for $i\leq 0$. Then the following hold.

\begin{enumerate}
\item[\rm (i)]
There is a graded $A$-homomorphism $s \colon P\to P$ of degree $1$ 
such that the chain endomorphism $\delta\circ s + s\circ \delta$ of $P$
is equal to multiplication by $z$ on $P$. 

\item[\rm (ii)]
The graded $k$-linear map
$$s^\vee = \Hom_A(s,V) \colon \Hom_A(P,V) \to \Hom_A(P[1], V)$$
sending $f\in$ $\Hom_A(P_n,V)$ to $f\circ s\in$ $\Hom_A(P_{n-1},V)$
for all $n\geq 0$ is a homomorphism of cochain complexes. In 
particular, $s^\vee$ induces a graded $k$-linear map of degree $-1$
$$D^A_z = H^*(s^\vee) \colon \Ext_A^*(U,V) \to \Ext_A^{*-1}(U,V) . $$

\item[\rm (iii)]
The graded map $D^A_z$ is independent of the choice of the projective
resolution $P$ and of the choice of the the homotopy $s$ satisfying (i). 
In particular, we have $D^A_0 = 0$. 
\end{enumerate}
\end{theorem}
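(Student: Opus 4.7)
The plan is to handle the three parts in order. Part (i) is a standard inductive lifting argument using projectivity of the $P_n$: since $z$ annihilates $U$, the map $z\cdot\mathrm{id}_{P_0}$ factors through $\ker(\pi)=\mathrm{im}(\delta_1)$, giving a lift $s_0\colon P_0\to P_1$ with $\delta_1 s_0 = z\cdot\mathrm{id}_{P_0}$. Inductively, if $s_0,\dots,s_{n-1}$ have been constructed, one checks that $z\cdot\mathrm{id}_{P_n}-s_{n-1}\delta_n$ lies in $\ker(\delta_n)=\mathrm{im}(\delta_{n+1})$ by using the defining relation at level $n-1$ together with $\delta^2=0$; projectivity of $P_n$ then yields $s_n$ with $\delta_{n+1}s_n + s_{n-1}\delta_n = z\cdot\mathrm{id}_{P_n}$.

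Part (ii) is a direct dualisation. After fixing the sign conventions of Remark~\ref{signconventions}, the essential identity is that for $f\in\Hom_A(P_n,V)$ and $x\in P_{n+1}$,
\[ f(s\delta x) + f(\delta s x) \;=\; f(zx) \;=\; z\cdot f(x) \;=\; 0, \]
where the last equality uses that $z$ annihilates $V$. This says that $s^\vee$ commutes with the differentials when regarded as a degree $-1$ map $\Hom_A(P,V)\to \Hom_A(P[1],V)$, and $D^A_z$ is then defined as $H^*(s^\vee)$.

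For part (iii) I would argue in two stages. First, if $s,s'$ both satisfy (i), then $t=s-s'$ is a degree $1$ graded endomorphism of $P$ with $\delta t+t\delta=0$. The same inductive lifting principle — checking at each stage that the expression to be lifted is a cycle in the appropriate kernel — produces a degree $2$ graded endomorphism $h$ of $P$ with $\delta h - h\delta = t$, and dualising, $h^\vee$ is a cochain homotopy between $s^\vee$ and $(s')^\vee$. Second, for another projective resolution $(P',\pi')$ with its own homotopy $s'$, I would fix a chain map $\alpha\colon P\to P'$ lifting $\mathrm{id}_U$; a direct computation yields
\[ \delta'(s'\alpha - \alpha s) + (s'\alpha - \alpha s)\delta \;=\; z\alpha - \alpha z \;=\; 0, \]
using that $z\in Z(A)$ commutes with the $A$-linear $\alpha$. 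Once again the inductive lifting argument produces a degree $2$ graded map $h\colon P\to P'$ with $\delta' h - h\delta = s'\alpha - \alpha s$, so $s^\vee\alpha^*$ and $\alpha^*(s')^\vee$ are cochain-homotopic; since $\alpha^*$ is an isomorphism on $\Ext^*_A(U,V)$, the two definitions of $D^A_z$ agree. The case $z=0$ is trivially handled by taking $s=0$. The only real obstacle is the careful sign bookkeeping in (ii) and in the two ``degree $2$ lifting'' steps of (iii); conceptually everything reduces to a single uniform principle, namely lifting a verified cycle through a chosen projective in the resolution, applied at three different levels.
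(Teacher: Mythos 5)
Your proposal is correct and follows essentially the same route as the paper: existence of $s$ by null-homotopy of the chain map lifting $z\cdot\Id_U=0$, the identity $f\circ(\delta s+s\delta)=f\circ z=0$ for part (ii), and for part (iii) the observation that $s'\alpha-\alpha s$ is a chain map $P[1]\to P'$ lifting zero, hence null-homotopic, with the dual homotopy (after sign adjustment) showing the induced maps on cohomology agree. The only cosmetic difference is that you re-run the inductive lifting to produce the degree-$2$ homotopy where the paper simply cites $\Ext^{-1}_A(U,U)=0$.
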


\begin{remark} \label{signconventions}
If $A$ is obvious from the context, we write $D_z$ instead of $D^A_z$.
Note that $D_z$ is a graded map which depends on $U$ and $V$.
With the notation of the Theorem, we use the following sign conventions.
For $i$ an integer, the shifted complex $P[i]$ is equal, in degree $n$, to
$P_{n-i}$, with differential $(-1)^i\delta$. The cochain complex
$\Hom_A(P,V)$ has differential in degree $n$ given by precomposing with
$(-1)^{n+1}\delta_{n+1}$. (This is consistent with the standard sign 
conventions, as described in \cite[Section 2.7]{Benson:1998b}, 
for total complexes of double complexes of the form $\Hom_A(P,Q)$, 
where $Q$ is another chain complex, modulo regarding $\Hom_A(P,V)$ as a chain 
complex.) Combining the above sign conventions for shifts and total 
complexes, we get that the chain complex $P[1]$ has differential $-\delta$
and the cochain complex $\Hom_A(P[1],V)$ has in degree $n$ the
differential sending $f\in$ $\Hom_A(P_{n-1},V)$ to 
$-(-1)^{n+1}f\circ\delta_n=$ $(-1)^n f\circ\delta_n\in$ $\Hom_A(P_n,V)$. 
The sign convention for cochain complexes of the form $\Hom_A(P,V)$ has 
no impact on the definition of the operators $D_z$, but it does matter
for the signs of Bockstein homomorphisms. Had we chosen the differential
of $\Hom_A(P,V)$ simply being given by precomposing with $\delta$, then
the Bockstein homomorphisms in Proposition \ref{D-functorial} below would 
anticommute  with the operators $D_z$. 
\end{remark}

\begin{proof}[{Proof of Theorem \ref{degreeminusone1}}]
Multiplication by $z$ annihilates $U$. Thus the chain endomorphism
of $P$ induced by multiplication with $z$ is homotopic to zero.
This proves the existence of a homotopy $s$ satisfying (i). 
Let $n$ be a nonnegative integer. 
In order to show that $s^\vee$ is a cochain map, we need to show
that the following diagram of $k$-linear maps 
$$\xymatrix{
\cdots\ar[r] & \Hom_A(P_n,V) \ar[d]^{s_{n-1}^\vee} 
\ar[rr]^{(-1)^{n+1}\delta_{n+1}^\vee} 
& & \Hom_A(P_{n+1}, V) \ar[d]^{s_n^\vee} \ar[r] & \cdots \\
\cdots\ar[r] & \Hom_A(P_{n-1},V) \ar[rr]_{(-1)^{n}\delta_{n}^\vee} 
& & \Hom_A(P_{n}, V) \ar[r] & \cdots } $$
is commutative.  
The commutativity of this diagram is equivalent to 
$$(-1)^{n+1} f\circ\delta_{n+1}\circ s_n= 
(-1)^n f\circ s_{n-1}\circ\delta_n , $$
hence to 
$$f\circ (\delta_{n+1}\circ s_n + s_{n-1}\circ\delta_n) = 0 ,$$
for all $f\in$ $\Hom_A(P_n,V)$. 
By the choice of the homotopy $s$, the left side is equal to 
$f \circ \zeta$, where $\zeta \colon P_n\to P_n$ is equal to 
multiplication by $z$. Since $f(zP_n)=$ $zf(P_n)\subseteq$ $zV=\{0\}$, 
it follows that $f\circ\zeta=0$. This shows that $s^\vee$ is a cochain 
map. Taking cohomology, $s^\vee$ induces a degree $-1$ map $D_z$ 
as stated, whence (ii).

Let $P'$ be a projective resolution of $U$, with differential $\delta'$
and quasi-isomorphism $P'\to U$ given by a map $\pi' : P'_0\to U$. 
Let $s'$ be a homotopy on $P'$ with the property that the chain 
endomorphism $\delta'\circ s' + s'\circ \delta'$ of $P'$
is equal to multiplication by $z$ on $P'$. Let $a : P\to P'$ be a chain 
homotopy equivalence lifting the identity on $U$, via the maps $\pi$ and 
$\pi'$. We need to show that the homotopies $a\circ s$ and $s'\circ a$
from $P[1]$ to $P'$ induce the same map upon applying $\Hom_A(-,V)$.
Set $t = a\circ s - s'\circ a$. We will use the same letter $\zeta$ for
the chain endomorphisms of $P$ and $P'$ given by multiplication with $z$.
Using that $a$ commutes with the differentials of $P$ and $P'$, we have
$$\delta'\circ t + t\circ \delta = 
a\circ\delta\circ s - \delta'\circ s'\circ a + a\circ s\circ\delta -
s'\circ\delta'\circ a = a\circ \zeta - \zeta\circ a = 0  .$$
Taking into account that the differential of $P[1]$ is $-\delta$, this 
implies that $t$ is in fact a chain map from $P[1]$ to $P'$, or 
equivalently, from $P$ to $P'[-1]$. The homotopy class of such a chain 
map represents an element in $\Ext^{-1}_A(U,U)=$ $\{0\}$, and hence the 
chain map $t$ is homotopic to zero. That is, there is a graded map
$u : P[1]\to$ $P'$ of degree $1$ such that $t=$ 
$\delta'\circ u-u\circ\delta$, where as before the sign comes from the 
fact that the differential of $P[1]$ is $-\delta$. Since $t$ is a chain 
map, it follows that $t^\vee =$ $\Hom_A(t,V) \colon \Hom_A(P',V)\to$ 
$\Hom_A(P[1],V)$ is a cochain map. The functor $\Hom_A(-,V)$ 
sends the homotopy $u$ to a homotopy $u^\vee$ satisfying $t^\vee=$
$u^\vee\circ\delta^\vee-\delta^\vee\circ u^\vee$. We need to adjust 
$u^\vee$ with the signs needed to compensate for the signs in the 
differentials of $\Hom_A(P',V)$ and $\Hom_A(P[1],V)$ according to the
sign convention in Remark \ref{signconventions}. More precisely, one
checks that $((-1)^{n+1}u_n^\vee)$ is the homotopy which is needed to
show that the cochain map $t^\vee $ is homotopic to zero. Thus $t^\vee$ 
induces the zero map on cohomology. This shows that the maps $s^\vee$ 
and $(s')^\vee$ induce the same map $D_z$ upon taking cohomology, 
hence in particular $D_0=0$. This  proves (iii).
\end{proof}

Let $A$ be an algebra over a commutative ring $k$. Any element $z\in$ 
$Z(A)$ induces a chain endomorphism on a projective resolution $P$ of 
an $A$-module $U$, and hence a graded linear endomorphism on 
$\Ext_A^*(U,V)$, for any two $A$-modules $U$, $V$. In this way, the 
space of graded $k$-linear endomorphisms of $\Ext_A^*(U,V)$ of any 
fixed degree becomes a module over $Z(A)$. Since multiplication by $z$ 
induces an element in the centre of the module category of $A$, it 
follows easily that this module structure does not depend on the choice 
of the projective resolution $P$, and moreover, for the same reason, it 
makes no difference whether we compose the endomorphism on $U$ given
by $z$ with  an element in $\Ext_A^*(U,V)$ or compose this element with
the endomorphism on $V$ given by $z$. 

\begin{theorem} \label{degreeminusone2}
Let $A$ be a $k$-algebra, let $U$, $V$ be 
$A$-modules, and let $y$, $z\in$ $Z(A)$. 

\begin{enumerate}
\item[\rm (i)]
Suppose that $y$ and $z$ annihilate $U$ and $V$. Then $y+z$ annihilates 
$U$ and $V$, and we have $D_{y+z} = D_y + D_z$.

\item[\rm (ii)]
Suppose that $z$ annihilates $U$ and $V$. Then $yz$ annihilates
$U$ and $V$, and we have $D_{yz} = yD_z$. 

\item[\rm (iii)]
Suppose that $y$ annihilates $V$ and that $z$ annihilates $U$ and $V$. Then we have
$D_{yz}=0$. 

\item[\rm (iv)]
Let $e$ be an idempotent in $Z(A)$ such that $e$ annihilates $U$ and 
$V$. Then $D_e=0$.
\end{enumerate}
\end{theorem}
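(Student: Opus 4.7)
My plan is to prove (i)--(iv) by making heavy use of the flexibility in the choice of homotopy afforded by Theorem~\ref{degreeminusone1}(iii), constructing homotopies for $y+z$ and $yz$ directly out of homotopies for $y$ and $z$.

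For (i), I fix a projective resolution $P$ of $U$ and pick homotopies $s$ and $s'$ on $P$ such that $\delta\circ s+s\circ\delta$ is multiplication by $z$ and $\delta\circ s'+s'\circ\delta$ is multiplication by $y$. Then $s+s'$ is a homotopy witnessing multiplication by $y+z$, and since $(s+s')^\vee=s^\vee+(s')^\vee$ under the additive functor $\Hom_A(-,V)$, passing to cohomology gives $D_{y+z}=D_y+D_z$.

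For (ii), first note $yz$ annihilates $U$ and $V$ since $z$ does. If $s$ is a homotopy as in Theorem~\ref{degreeminusone1}(i) for $z$, then because $y\in Z(A)$ the map $y\cdot s$ (post-multiplication by $y$, equivalently pre-multiplication) is again an $A$-linear graded map, and $\delta\circ(ys)+(ys)\circ\delta=y(\delta s+s\delta)$ equals multiplication by $yz$. Hence $(ys)^\vee$ computes $D_{yz}$. For $f\in\Hom_A(P_n,V)$ and $p\in P_{n-1}$ we have $(ys)^\vee(f)(p)=f(ys(p))=yf(s(p))$, so at the chain level $(ys)^\vee=y\cdot s^\vee$, where $y$ acts on $\Hom_A(P,V)$ through its action on $V$. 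Taking cohomology yields $D_{yz}=y\cdot D_z$, with $y$ acting on $\Ext^*_A(U,V)$ via its canonical $Z(A)$-module structure described in the paragraph preceding the theorem. Statement (iii) is then immediate: if $y$ annihilates $V$, then multiplication by $y$ on $V$ is zero, hence induces the zero endomorphism of $\Hom_A(P,V)$ and of $\Ext^*_A(U,V)$, so (ii) gives $D_{yz}=y\cdot D_z=0$.

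Finally, (iv) is a one-line deduction from (iii): take $y=z=e$. The hypotheses of (iii) are satisfied since $e$ annihilates both $U$ and $V$, so $D_{e^2}=0$; but $e^2=e$, hence $D_e=0$. The main obstacle I expect is a bookkeeping one, namely justifying the identification $(ys)^\vee=y\cdot s^\vee$ in part (ii) carefully: this relies on the fact that for $y\in Z(A)$, multiplication by $y$ on $V$ is $A$-linear, and that the $Z(A)$-action on $\Ext^*_A(U,V)$ coming from $U$ coincides with the one coming from $V$ (noted in the discussion preceding the theorem), so that $y\cdot D_z$ is unambiguously defined. Once this identification is in place, all four parts follow formally.
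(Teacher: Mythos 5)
Your proposal is correct and follows essentially the same route as the paper: adding homotopies for (i), scaling a homotopy by the central element $y$ for (ii), deducing (iii) from the vanishing of the $y$-action on $V$ (hence on $\Ext^*_A(U,V)$), and obtaining (iv) from (iii) via $e=e^2$. The extra care you take in identifying $(ys)^\vee = y\cdot s^\vee$ and in noting that the two $Z(A)$-actions on $\Ext^*_A(U,V)$ agree is a welcome elaboration of a point the paper handles in its preamble to the theorem.
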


\begin{proof}
For (i), suppose that $y$ and $z$ annihilate $U$ and $V$. Then clearly 
$y+z$ annihilates $U$ and $V$. Let $P$ be a projective resolution of 
$U$, with differential $\delta$. Let $s$, $t$ be a homotopies on $P$ 
such that $\delta\circ s + s\circ \delta$ is the chain endomorphism of 
$P$ given by multiplication with $y$, and such that 
$\delta\circ t + t\circ \delta$ is the chain endomorphism of $P$ given 
by multiplication with $z$. Then $\delta\circ (s+t) + (s+t)\circ\delta$ 
is the endomorphism of $P$ given by multiplication with $y+z$. 
Statement (i) follows.
For (ii), suppose that $z$ annihilates $U$ and $V$. Then clearly $yz$ 
annihilates $U$ and $V$.  As before, let $t$ be a homotopy on $P$ such 
that $\delta\circ t + t\circ \delta$ is the chain endomorphism of $P$ 
given by multiplication with $z$. Denote by $y\cdot t$ the homotopy 
obtained from composing 
$t$ with the endomorphism given by multiplication with $y$. Then 
$\delta\circ (y\cdot t) + (y\cdot t)\circ\delta$ is equal to 
multiplication on $P$ by $yz$, which shows that $D_{yz}=$ $yD_z$. 
Statement (iii) follows from (ii) and the fact that multiplication
by $y$ annihilates $V$, hence annihilates the space $\Ext_A^*(U,V)$. 
Since $e=e^2$, statement (iv) is a special case of (iii). 
\end{proof}

The operators $D^A_z$ are compatible with the restriction to 
subalgebras $B$ containing $z$ such that $A$ is projective as a 
$B$-module.

\begin{prop} \label{res-Prop}
Let $A$ be a $k$-algebra and let $B$ be a subalgebra of $A$ such that 
$A$ is projective as a left $B$-module.
Let $z\in$ $Z(A)\cap B$, and let $U$, $V$ be $A$-modules. Suppose that
$z$ annihilates $U$ and $V$. 
We have a commutative diagram of graded maps
$$\xymatrix{\Ext^*_A(U,V) \ar[r]^{D^A_z} \ar[d] &
\Ext^{*-1}_A(U,V) \ar[d] \\
\Ext^*_B(\Res^A_B(U),\Res^A_B(V)) \ar[r]_{D^B_z} &
\Ext^{*-1}_B(\Res^A_B(U), \Res^A_B(V))  ,
}$$
where the vertical maps are induced by the restriction to $B$.
\end{prop}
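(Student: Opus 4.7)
The plan is to compute both operators $D^A_z$ and $D^B_z$ using a single resolution and a single homotopy, and then observe that the cochain-level restriction inclusion intertwines the two copies of $s^\vee$. Begin by choosing a projective resolution $(P,\delta)$ of $U$ over $A$, together with a homotopy $s \colon P \to P$ of degree $1$ satisfying $\delta\circ s + s\circ\delta = z \cdot \mathrm{Id}_P$, as provided by Theorem~\ref{degreeminusone1}(i). Since $A$ is projective as a left $B$-module, every free $A$-module is projective as a left $B$-module, and hence so is every projective $A$-module; thus $\Res^A_B P$ is a projective resolution of $\Res^A_B U$ over $B$. Moreover, $z \in Z(A) \cap B \subseteq Z(B)$, since an element of $Z(A)$ lying in $B$ certainly centralises $B$, and $z$ still annihilates $\Res^A_B U$ and $\Res^A_B V$. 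Viewing $s$ as a $B$-linear map of degree $1$, the identity $\delta\circ s + s\circ\delta = z \cdot \mathrm{Id}$ persists, so by Theorem~\ref{degreeminusone1}(iii) this same $s$ may be used to compute $D^B_z$.

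Next, the restriction of scalars along $B \hookrightarrow A$ induces an inclusion of cochain complexes
$$\Hom_A(P,V) \hookrightarrow \Hom_B(\Res^A_B P, \Res^A_B V),$$
and the cohomology of this map is precisely the left-hand vertical arrow of the diagram; the same construction applied to $P[1]$ gives the right-hand vertical arrow. Since in both rows $s^\vee$ is defined as precomposition with the same map $s$, this inclusion visibly intertwines the two copies of $s^\vee$: if $f \in \Hom_A(P_n,V)$ is regarded as an element of $\Hom_B(P_n, \Res^A_B V)$ via restriction, then $f \circ s$ is literally the same $k$-linear map whether $s$ is viewed as $A$-linear or $B$-linear.

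Passing to cohomology yields the commutativity of the required square. There is no real obstacle: the only subtlety is the identification embedded in the first step, namely that $D^B_z$ may be computed using $\Res^A_B P$ together with the restricted homotopy rather than some a priori separate choice, and this is exactly the independence assertion of Theorem~\ref{degreeminusone1}(iii).
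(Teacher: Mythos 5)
Your proposal is correct and follows essentially the same route as the paper: restrict the $A$-projective resolution $P$ and the homotopy $s$ to $B$, note that $\Res^A_B P$ is still a projective resolution and the restricted $s$ still satisfies the defining identity, and observe that precomposition with $s$ commutes with the restriction map on Hom-complexes. The paper states this more tersely, but the content, including the appeal to the independence-of-choices statement, is the same.
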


\begin{proof}
Let $P$ be a projective resolution of $U$. By the assumptions on $B$,
the restriction to $B$ of $P$ is a projective resolution of 
$\Res^A_B(U)$. Thus if $s$ is a homotopy on $P$ which defines
$D^A_z$ as in Theorem \ref{degreeminusone1}, then the restriction 
to $B$ of $s$ is the corresponding homotopy for $D^B_z$. The 
commutativity of the diagram follows immediately from the construction 
of the maps $D^A_z$ and $D^B_z$. 
\end{proof}

\begin{remark}
As pointed out by the referee, Proposition \ref{res-Prop}  holds even if $A$ is
not projective as a $B$-module. In that case, $\Res^A_B(P)$ is still a resolution
of $\Res^A_B(U)$, albeit not necessarily a projective one. But since the 
homotopy $s$ on $P$ restricts to a homotopy on $\Res^A_B(P)$, the proof
of Theorem \ref{degreeminusone1} (ii) still  yields an operator of degree $-1$
upon taking cohomology in $\Hom_B(\Res^A_B(P), \Res^A_B(V))$. 
To show that this yields a commutative diagram as stated, one
observes in the proof of Theorem \ref{degreeminusone1} (iii) that it suffices to 
assume that $P'$ is a resolution
which need not be projective but admits a homotopy $s'$ as in (i).
\end{remark}

\begin{prop} \label{ind-prop}
Let $G$ a finite group, $H$ a subgroup
of $G$ and $z\in$ $Z(kG)\cap kH$. Let $U$, $V$ be $kG$-modules, and
suppose that $z$ annihilates $U$ and $V$. We have a commutative
diagram of graded maps
$$\xymatrix{\Ext^*_{kH}(\Res^G_H(U), \Res^G_H(V)) \ar[r]^{D^{kH}_z} 
\ar[d]_{\Tr_H^G} & \Ext^{*-1}_{kH}(\Res^G_H(U), \Res^G_H(V)) 
\ar[d]^{\Tr^G_H}  \\
\Ext^*_{kG}(U, V) \ar[r]_{D^{kG}_z} & \Ext^{*-1}_B(U, V)  .
}$$
\end{prop}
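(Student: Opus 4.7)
The plan is to pick a single projective resolution and homotopy that simultaneously compute $D_z^{kG}$ and $D_z^{kH}$, and then to verify the commutativity of the diagram directly at the level of cochains using an explicit formula for the transfer.

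Since $G$ is finite, $kG$ is free as a left $kH$-module; thus if $(P,\delta)$ is a projective resolution of $U$ over $kG$, then $\Res^G_H P$ is a projective resolution of $\Res^G_H U$ over $kH$. By Theorem \ref{degreeminusone1}(i) I choose a $kG$-linear homotopy $s\colon P\to P$ of degree $+1$ satisfying $\delta\circ s + s\circ\delta = z\cdot\Id_P$. Since $z\in Z(kG)\cap kH\subseteq Z(kH)$, the same $s$, viewed as $kH$-linear, is an admissible homotopy for computing $D_z^{kH}$ on $\Res^G_H P$, exactly as in the proof of Proposition \ref{res-Prop}. Hence $D_z^{kG}$ is induced by post-composition with $s$ on $\Hom_{kG}(P, V)$, and $D_z^{kH}$ by post-composition with the same underlying map $s$ on $\Hom_{kH}(\Res^G_H P, \Res^G_H V)$.

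Next, I realise the transfer at the cochain level as the $k$-linear map
\[ \Tr_H^G\colon \Hom_{kH}(\Res^G_H P, \Res^G_H V) \to \Hom_{kG}(P, V), \qquad f\mapsto \sum_{gH\in G/H}(g\cdot f), \]
where $(g\cdot f)(x)=g\,f(g^{-1}x)$ and the sum runs over any set of left coset representatives of $H$ in $G$. Standard checks show this expression is independent of the chosen representatives (by $kH$-linearity of $f$), takes values in $\Hom_{kG}(P, V)$, commutes with the differentials (since $\delta$ is $kG$-linear), and induces the usual transfer on $\Ext$.

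For $f\in\Hom_{kH}(P_n, \Res^G_H V)$ and $x\in P_n$, I now compute
\[ \bigl((\Tr_H^G f)\circ s\bigr)(x) = \sum_{g} g\,f(g^{-1}s(x)), \qquad \bigl(\Tr_H^G(f\circ s)\bigr)(x) = \sum_{g} g\,f(s(g^{-1}x)), \]
and these coincide because $s$ is $kG$-linear, so $s(g^{-1}x)=g^{-1}s(x)$. This yields commutativity at the cochain level, which descends to the asserted commutative diagram of $\Ext$-groups. The only delicate point—scarcely an obstacle—is recognising that $s$ may, and must, be chosen $kG$-linear (Theorem \ref{degreeminusone1} is applied over $kG$): it is precisely this extra linearity, beyond the $kH$-linearity needed to compute $D_z^{kH}$, that allows $s$ to be slid past the $g$-action inside the transfer sum.
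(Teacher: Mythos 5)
Your proposal is correct and follows essentially the same route as the paper: choose a $kG$-projective resolution $P$ with a $kG$-linear homotopy $s$ realising multiplication by $z$, observe that its restriction computes $D_z^{kH}$, and use the $kG$-linearity of $s$ to slide it past the transfer sum, i.e.\ $\Tr_H^G(f)\circ s = \Tr_H^G(f\circ s)$. The only quibble is terminological: the operator $D_z$ is induced by \emph{pre}composition $f\mapsto f\circ s$ (as your displayed computation in fact uses), not post-composition.
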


\begin{proof}
Let $P$ be a projective resolution of the $kG$-module $U$, with
differential denoted $\delta$. By the assumptions on $z$ and by
Theorem \ref{degreeminusone1} there is a homotopy $s$ on $P$ such that 
$\delta\circ s + s\circ \delta$ is equal to the chain endomorphism of 
$P$ given by multiplication with $z$. The operator $D_z^{kG}$ is
induced by the map sending $f\in$ $\Hom_{kG}(P_n,V)$ to
$f\circ s_{n-1}$. Since $\Res^G_H(P)$ is a projective resolution
of $\Res^G_H(U)$, it follows that $D^{kH}_z$ is induced by the
map sending $f' \in$ $\Hom_{kH}(\Res^G_H(P_n), \Res^G_H(V))$
to $f'\circ s_{n-1}$. Since $s_{n-1}$ is a $kG$-homomorphism, we have
$\Tr^G_H(f')\circ s_{n-1} =$ $\Tr^G_H(f' \circ s_{n-1})$, proving the
result.
\end{proof}

The operators $D^A_z$ satisfy a K\"unneth formula. We suppress the
superscripts in what follows, since the central element subscripts
determine which algebra we are working in. 

\begin{prop} \label{kunneth-prop1}
Let $A$, $B$ be $k$-algebras such that $A$, $B$
are finitely generated projective as $k$-modules, let $U$, $U'$ be 
$A$-modules and $V$, $V'$ be $B$-modules, all finitely generated
projective as $k$-modules. Let $z\in$ $Z(A)$ and $w\in$ $Z(B)$ such
that $z$ annihilates $U$, $U'$ and $w$ annihilates $V$, $V'$. 

Then $z\ten 1$ and $1 \ten w$ annihilate the $A\tenk B$-modules  
$U\tenk V$ and $U'\tenk V'$, and we have a commutative diagram
{\small
$$\xymatrix{
\Ext^i_A(U,U')\tenk \Ext^j_B(V,V')  
\ar[r] \ar[d]_{(D_z \ten 1,\  1 \ten D_w)}   &
\Ext^{i+j}_{A\tenk B}(U\tenk V, U'\tenk V') 
\ar[d]^{D_{z\ten 1 + 1\ten w}} \\
(\Ext^{i-1}_A(U,U')\tenk \Ext^j_B(V,V'))  
\oplus (\Ext^i_A(U,U')\tenk \Ext^{j-1}_B(V,V'))  \ar[r]  &
\Ext^{i+j-1}_{A\tenk B}(U\tenk V, U'\tenk V') ,
}$$}%
where $i$, $j$ are nonnegative integers. Moreover, we have
$D_{z\ten w} = 0$.
In particular, if $k$ is a field, then $D_{z\ten 1 + 1\ten w}$ is
determined by $D_z$, $D_w$, combined with the K\"unneth formula.
\end{prop}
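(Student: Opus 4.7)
The plan is to build the homotopy that computes $D_{z\ten 1 + 1\ten w}$ as a total-complex combination of the homotopies that compute $D_z$ and $D_w$, so that the diagram commutes at the chain level. The annihilation claim is immediate: since $z$ kills $U$ and $U'$, the element $z\ten 1$ kills $U\tenk V$ and $U'\tenk V'$, and similarly for $1\ten w$; hence so do their sum and product.

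I pick projective resolutions $(P,\delta_P)$ of $U$ over $A$ and $(Q,\delta_Q)$ of $V$ over $B$, together with homotopies $s$ on $P$ and $t$ on $Q$ realising multiplication by $z$ and $w$ respectively, as guaranteed by Theorem \ref{degreeminusone1}(i). Since $A$ and $B$ are finitely generated projective over $k$, each $P_i$ and each $Q_j$ is projective, hence flat, over $k$; the Künneth formula for chain complexes then shows that $P\tenk Q$, equipped with the standard total-complex differential $\delta(p\ten q)=\delta_P(p)\ten q + (-1)^{|p|} p\ten \delta_Q(q)$, is a projective resolution of $U\tenk V$ over $A\tenk B$. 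Define $S(p\ten q) = s(p)\ten q + (-1)^{|p|} p\ten t(q)$; a direct calculation shows that the cross terms cancel and $\delta\circ S + S\circ \delta$ equals multiplication by $z\ten 1+1\ten w$ on $P\tenk Q$, so by Theorem \ref{degreeminusone1} this $S$ computes $D_{z\ten 1 + 1\ten w}$.

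To verify the diagram, I represent classes $\alpha\in \Ext^i_A(U,U')$ and $\beta\in \Ext^j_B(V,V')$ by cocycles $f\colon P_i\to U'$ and $g\colon Q_j\to V'$; their Künneth product is the class of $f\ten g$, viewed as supported on the summand $P_i\tenk Q_j$ of $(P\tenk Q)_{i+j}$. Precomposition of $f\ten g$ with $S$ is nonzero only on the summands $P_{i-1}\tenk Q_j$ and $P_i\tenk Q_{j-1}$, where it equals $(f\circ s)\ten g$ and $(-1)^i f\ten (g\circ t)$ respectively. These represent $D_z(\alpha)\ten \beta + (-1)^i \alpha\ten D_w(\beta)$, which is precisely the image of $\alpha\ten \beta$ under $(D_z\ten 1,\ 1\ten D_w)$ in the sign convention of the statement, so the diagram commutes.

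The vanishing $D_{z\ten w}=0$ is then immediate from Theorem \ref{degreeminusone2}(ii): writing $z\ten w=(z\ten 1)(1\ten w)$, we get $D_{z\ten w} = (z\ten 1)\,D_{1\ten w}$, and multiplication by $z\ten 1$ acts as zero on $\Ext^*_{A\tenk B}(U\tenk V, U'\tenk V')$ because $z$ annihilates $U'$. When $k$ is a field, the Künneth map in the top row of the diagram is an isomorphism (summed over $i+j=n$), so the commuting diagram expresses $D_{z\ten 1+1\ten w}$ entirely in terms of $D_z$ and $D_w$. The main obstacle here is purely bookkeeping: keeping the total-complex signs consistent so that the cross terms in $\delta\circ S+S\circ \delta$ cancel cleanly and so that the induced map on cohomology matches the sign convention built into $(D_z\ten 1,\ 1\ten D_w)$; once these signs are pinned down, the proof reduces to a chain-level identification.
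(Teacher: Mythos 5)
Your proposal is correct and follows essentially the same route as the paper: the same total-complex differential and the same homotopy $\sigma(u\ten v)=(s(u)\ten v,\ (-1)^{|u|}u\ten t(v))$ realising multiplication by $z\ten 1+1\ten w$, with the vanishing of $D_{z\ten w}$ deduced from $z\ten w=(z\ten 1)(1\ten w)$ exactly as in Theorem \ref{degreeminusone2}. Your explicit cocycle-level verification of the commuting square is a slightly more detailed account of the step the paper summarises as ``the first statement follows.''
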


\begin{proof}
Note the following sign convention (as in statement (ii) of Theorem
\ref{th:main}): the second component $1\ten D_w$
of the left vertical map sends $\eta\ten\theta$ to 
$(-1)^i\eta\ten D_w(\theta)$, where $\eta\in$ $\Ext^i_A(U,U')$ and
$\theta\in$ $\Ext^j_B(V,V')$.

The assumptions on $z$ and $w$ imply that $z\ten 1$ and 
$1 \ten w$ annihilate the $A\tenk B$-modules $U\tenk V$ and 
$U'\tenk V'$. Let $(P, \delta)$ be a projective resolution of $U$ and
$(Q,\epsilon)$ a projective resolution of $V$. Since $A$ and $B$
are projective as $k$-modules, it follows that $P\tenk Q$
is a projective resolution of the $A\tenk B$-module $U\tenk V$. 
Note the signs in the differential $\delta\ten \epsilon$ 
of $P\tenk Q$; more precisely, the differential $\delta\ten\epsilon$
sends $u \ten v \in$ $P_i\tenk Q_j$ to 
$$(\delta_i(u)\ten v ,\  (-1)^iu \ten\epsilon_j(v)) $$
in $(P_{i-1}\tenk Q_j)\oplus (P_i\tenk Q_{j-1})$.  

We have a canonical isomorphism of cochain complexes
$$\Hom_A(P,U')\tenk \Hom_B(Q,V')
\cong \Hom_{A\tenk B}(P\tenk Q, U'\tenk V')$$
thanks to the assumptions that the involved algebras and modules
are finitely generated projective as $k$-modules.
Upon taking cohomology, this induces the horizontal maps
$$\Ext^i_A(U,U')\tenk \Ext^j_B(V,V')\to
\Ext^{i+j}_{A\tenk B}(U\tenk V, U'\tenk V')$$ 
in the statement.

Let $s$ be a homotopy on $P$ such that $s\circ\delta + \delta\circ s$ 
is equal to multiplication by $z$ on $P$. Similarly, let $t$ be a 
homotopy on $Q$ such that $t\circ \epsilon + \epsilon \circ t$ is equal 
to multiplication by $w$ on $Q$.  Define the homotopy $\sigma$ on 
$P\tenk Q$ by sending $u \ten v\in$ $P_i\tenk Q_j$ to 
$$\sigma(u\ten v) = (s_i(u)\ten v,\  (-1)^i u\ten t_j(v))$$
in $(P_{i+1}\tenk Q_i) \oplus (P_i \tenk Q_{j+1})$. 
The sign $(-1)^i$ is needed because of the above mentioned sign
convention for $1\ten D_w$. We need to show 
that this homotopy has the property that the chain endomorphism
of $P\tenk Q$ given by 
$$\sigma \circ (\delta \ten \epsilon) + (\delta\ten\epsilon)
\circ \sigma$$ 
is equal to multiplication by $z \ten 1 + 1 \ten w$ on  $P\tenk Q$.  

Let $u\ten v\in$ $P_i\tenk Q_j$. We calculate first the image of 
$u\ten v$ under $\sigma \circ (\delta\ten \epsilon)$.
The differential $\delta\ten\epsilon$
sends $u\ten v$ to the element 
$$(\delta_i(u)\ten v, (-1)^i u\ten \epsilon_j(v))$$
in $(P_{i-1}\tenk Q_j) \oplus (P_i\tenk Q_{j-1})$. 
The homotopy $\sigma$ sends this to the element
$$((-1)^{i-1}\delta_i(u)\ten t_j(v), 
s_{i-1}(\delta_i(u))\ten v +  u\ten t_{j-1}(\epsilon_j(v)),
(-1)^i s_i(u) \ten \epsilon_j(v))$$
in $(P_{i-1}\tenk Q_{j+1}) \oplus (P_i\tenk Q_j) \oplus 
(P_{i+1}\tenk Q_{j-1})$. 

We calculate next the image of $u\ten v$ under 
$(\delta\ten\epsilon) \circ \sigma$. 
The homotopy $\sigma$ sends $u \ten v$ to the element
$$(s_i(u) \ten v, (-1)^i u \ten t_j(v))\ $$
in $(P_{i+1}\tenk Q_j) \oplus (P_i\tenk Q_{j+1})$.
Applying the differential $\delta\ten \epsilon$ to this element yields
$$((-1)^{i}\delta_i(u)\ten t_j(v),\ 
\delta_{i+1}(s_i(u))\ten v + u\ten \epsilon_{j+1}(t_j(v)),
(-1)^{i+1}s_i(u) \ten \epsilon_j(v))$$
in $(P_{i-1}\tenk Q_{j+1}) \oplus (P_i\tenk Q_j) \oplus 
(P_{i+1}\tenk Q_{j-1})$. 
The sum of the images of $u\ten v$ under the two maps
$\sigma \circ (\delta\circ \epsilon)$ and
$(\delta\ten\epsilon) \circ \sigma$ is therefore equal to 
$$(0, ((z\ten 1) + (1 \ten w))(u\ten v), 0)$$
as claimed. 
The first statement follows. Since $z\ten w=$ $(z\ten 1)(1\ten w)$,
the second statement follows from Theorem \ref{degreeminusone2} (iii). 
\end{proof}

\begin{prop} \label{Danticommute}
Let $A$ be a $k$-algebra, let $z$, $w\in$ 
$Z(A)$, and let $U$, $V$ be $A$-modules. Suppose that $z$, $w$ 
annihilate $U$ and $V$. The following hold.
\begin{enumerate}
\item[{\rm (i)}] 
$D_z\circ D_z=0$.
\item[{\rm (ii)}] 
$D_w\circ D_z= -D_z\circ D_w$.
\end{enumerate}
\end{prop}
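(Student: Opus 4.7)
My plan is to prove both parts in parallel by lifting everything to the level of chain homotopies on a projective resolution $(P,\delta)$ of $U$. By Theorem~\ref{degreeminusone1}(i) I may choose graded $A$-linear maps $s, t\colon P \to P$ of degree $+1$ satisfying $\delta\circ s + s\circ\delta = z\cdot\mathrm{id}_P$ and $\delta\circ t + t\circ\delta = w\cdot\mathrm{id}_P$. At the level of the cochain complex $\Hom_A(P,V)$ the operator $D_z$ is induced by precomposition with $s$ and $D_w$ by precomposition with $t$, so $D_z\circ D_w + D_w\circ D_z$ is induced by precomposition with $\psi := t\circ s + s\circ t$, a graded endomorphism of $P$ of degree $+2$.

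The key computation will be that $\psi$ commutes with $\delta$. Using centrality of $z$ and $w$ (so they behave as scalars and commute with $s$ and $t$), a direct expansion gives
\[ \delta\circ(t\circ s) - (t\circ s)\circ\delta = (\delta\circ t)\circ s - t\circ(s\circ\delta) = (w - t\circ\delta)\circ s - t\circ(z - \delta\circ s) = ws - zt, \]
and symmetrically $\delta\circ(s\circ t) - (s\circ t)\circ\delta = zt - ws$. Adding, $\delta\circ\psi = \psi\circ\delta$, which says that $\psi$, viewed as a degree-$0$ chain map $P \to P[-2]$ (with the sign conventions of Remark~\ref{signconventions}), lifts a morphism in $\Ext^{-2}_A(U,U)$. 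Since the latter group vanishes, $\psi$ is null-homotopic: there is a graded map $h\colon P \to P$ of degree $+3$ with $\psi = \delta\circ h + h\circ\delta$.

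For a cocycle $f \in \Hom_A(P_n,V)$ one has $f\circ\delta = 0$, hence $f\circ\psi = (f\circ h)\circ\delta$, which is a coboundary in $\Hom_A(P,V)$ up to the sign in Remark~\ref{signconventions}. This proves (ii). For (i), I repeat the same calculation with $t = s$ and $w = z$: one finds $\delta\circ s^2 - s^2\circ\delta = (z - s\circ\delta)\circ s - s\circ(z - \delta\circ s) = 0$, so $s^2$ is likewise a null-homotopic chain endomorphism of $P$ of degree $+2$, and $D_z\circ D_z = 0$.

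The main obstacle is essentially conceptual rather than computational: recognising at the outset that the graded endomorphisms $ts+st$ and $s^2$ of $P$ are in fact chain maps (in the graded sense appropriate to degree $+2$), after which their null-homotopy is automatic from the vanishing of $\Ext^{-2}_A(U,U)$. The remaining work — invoking independence of $D_z$ from the choice of homotopy (Theorem~\ref{degreeminusone1}(iii)) to fix a single convenient $s$, and tracking signs through Remark~\ref{signconventions} to confirm that precomposition with a null-homotopic map of positive degree yields a coboundary — is routine bookkeeping.
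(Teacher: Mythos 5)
Your proof is correct, and for part (i) it coincides with the paper's argument: show that $s\circ s$ commutes with $\delta$ (using that the $A$-linear map $s$ commutes with the central element $z$), conclude that it is a chain map $P[2]\to P$, hence null-homotopic because $\Ext^{-2}_A(U,U)=0$, and hence induces zero after applying $\Hom_A(-,V)$. The only divergence is in part (ii): you verify directly that $t\circ s+s\circ t$ commutes with $\delta$ (your computation $\delta\circ(ts)-(ts)\circ\delta=ws-zt$ and its mirror image are correct, and their sum vanishes), whereas the paper obtains (ii) by polarization -- it applies (i) to the central element $z+w$, using the homotopy $s+t$ and the additivity $D_{z+w}=D_z+D_w$ from Theorem~\ref{degreeminusone2}(i), so that $0=D_{z+w}^2=D_z^2+D_w^2+D_zD_w+D_wD_z$. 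The two routes are essentially equivalent in content; the paper's is marginally shorter because it reuses (i) verbatim, while yours makes the chain-map property of the ``anticommutator'' $ts+st$ explicit without appealing to the additivity statement. Your handling of the remaining sign bookkeeping (via Remark~\ref{signconventions}) matches the level of detail in the paper's own proof.
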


\begin{proof}
Let $P=$ $(P_n)_{n\geq 0}$ together with a surjective
$A$-homomorphism $\pi \colon P_0\to U$ be a projective resolution of $U$, 
with differential $\delta=$ $(\delta_n \colon P_n\to P_{n-1})_{n\geq 1}$. 
As above, for notational convenience, we set $P_i=0$ for $i<0$ and 
$\delta_i=0$ for $i\leq 0$. 

Let $s$  be a homotopy on $P$ such that $s\circ\delta+\delta\circ s$
is equal to multiplication by $z$ on $P$. By the construction from
Theorem  \ref{degreeminusone1}, the map $D_z$ is induced
by the map sending $f\in$ $\Hom_A(P_n,V)$ to $f\circ s$.
Thus $D_z\circ D_z$ is induced by the map sending $f\in$ $\Hom_A(P_n,V)$ 
to $f\circ s\circ s$. In order to show $D_z\circ D_z=0$, we need to
show that if $f$ is a cocycle (that is, $f\circ \delta_{n+1}=0$), then
$f\circ s\circ s = 0$. For this it suffices to show that 
the graded degree $2$ map $s\circ s$ is a chain map from 
$P[2]$ to $P$. Indeed, any such chain map is homotopic to zero (as it 
represents an element in $\Ext^{-2}_A(U,U)=0$), hence, upon applying the 
contravariant functor $\Hom_A(-,V)$, it induces a cochain map 
$\Hom_A(P,V)\to$ $\Hom_A(P[2],V)$ which is still homotopic to zero and 
which therefore induces the zero map in cohomology.

Composing the chain map $s\circ\delta+\delta\circ s$ 
with $s$ in either order yields the equations (of graded endomorphisms of $P$ of degree 
$1$)
$$s\circ s\circ \delta + s\circ\delta\circ s = s\cdot z ,$$
$$s\circ \delta\circ s + \delta\circ s\circ s = z\cdot s .$$
The right sides of the two equations are equal, since
$s$ is a (graded) $A$-homomorphism, so commutes with the action of $z$.
Taking the difference of these two equations yields therefore
$$s\circ s\circ \delta - \delta\circ s\circ s = 0 .$$
This shows that $s\circ s$ is indeed a chain map 
$P[2]\to P$, which by the previous paragraph completes the proof of (i).
Statement (ii) follows from applying (i) to $z+w$ and using Theorem
\ref{degreeminusone2} (i). 
\end{proof}

Let $A$ be a $k$-algebra, $z\in$ $Z(A)$, and let $U$, $V$ be $A$-modules
which are annihilated by $z$.
The operator $D_z$ on $\Ext_A^*(U,V)$ can also be described using
an injective resolution $(I,\epsilon)$ of $V$ instead of a projective 
resolution $(P, \delta)$ of $U$. Let $p : P\to U$ and $i : V\to I$ be
quasi-isomorphisms, where $U$, $V$ are regarded as complexes concentrated
in degree $0$. Denote by $K(A)$ the homotopy cateegory of chain complexes
of $A$-modules. By standard facts (see e.g. 
\cite[Section 2.7]{Benson:1998b} or \cite[Section 2.7]{Weibel:1994a}), the
space $\Ext^*_A(U,V)$ can be identified with any of 
$$\xymatrix{\Hom_{K(A)}(P,V[n]) \ar[r]^{\cong}  & 
\Hom_{K(A)}(P, I[n]) & \ar[l]_{\cong} \Hom_{K(A)}(U, I[n]) ,} $$
where the isomorphisms are induced by composing with $i$ and 
precomposing with $p$. We reindex complexes as chain complexes,
if necessary (so in particular, an injective resolution of $V$ is
of the form $I_0\to I_{-1}\to I_{-2}\to\cdots$).

\begin{theorem} \label{degreeminusone3}
Let $A$ be a $k$-algebra, let $z\in$ $Z(A)$, and let $U$, $V$ be 
$A$-modules which are both annihilated by $z$. Let $(I,\epsilon)$
be an injective resolution of $V$ with quasi-isomorphism $i : V\to I$. 
Let $t$ be a homotopy on $I$ such
that $\epsilon\circ t+t\circ\epsilon$ is equal to multiplication by
$z$ on the terms of $I$. The graded $k$-linear map $t_\vee : 
\Hom_A(U,I) \to \Hom_A(U[1],I)$ sending $g\in$ $\Hom_A(U,I_{-n})$ to
$(-1)^n t_{-n}\circ g$ is a chain map, and the induced map in
cohomology is equal to $D_z$, where we identify the cohomology of
$\Hom_A(U,I)$ and $\Ext^*_A(U,V)$ using the isomorphisms preceding the
statement.   
\end{theorem}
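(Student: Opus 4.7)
The plan is first to verify directly that $t_\vee$ is a cochain map, and then to identify the induced cohomology operator with $D_z$ by comparing both descriptions through the Cartan--Eilenberg double complex $\Hom_A(P,I)$, where $(P,\delta)$ is a projective resolution of $U$ carrying a homotopy $s$ as in Theorem~\ref{degreeminusone1}.

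That $t_\vee$ is a cochain map is a direct calculation in the spirit of Theorem~\ref{degreeminusone1}(ii). For any $g\in\Hom_A(U,I_{-n})$, the $A$-linearity of $g$ combined with the hypothesis that $z$ annihilates $U$ gives $z_I\circ g=g\circ z_U=0$, so
\[ (\epsilon\circ t+t\circ\epsilon)\circ g=z_I\circ g=0. \]
Together with the sign $(-1)^n$ in the definition of $t_\vee$ and the sign conventions of Remark~\ref{signconventions}, this identity is exactly what is needed for $t_\vee$ to commute, up to the appropriate Koszul sign, with the differential on $\Hom_A(U,I)$.

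To identify $H^*(t_\vee)$ with $D_z$, let $T$ denote the total complex of the double complex $\Hom_A(P,I)$. Standard spectral sequence arguments, using that $P$ is a projective resolution and $I$ is an injective resolution, show that both natural maps
\[ i_*\colon \Hom_A(P,V)\to T,\qquad p^*\colon \Hom_A(U,I)\to T, \]
induced by $i\colon V\to I$ and by $p\colon P\to U$ respectively, are quasi-isomorphisms identifying $H^*(T)$ with $\Ext^*_A(U,V)$. Define a degree $-1$ operator $\sigma$ on $T$ by a formula of the shape
\[ \sigma(h)=\pm\, h\circ s_{m-1}\ \pm\ t_{-n}\circ h\qquad\text{for }h\in\Hom_A(P_m,I_{-n}), \]
with signs chosen in accordance with Remark~\ref{signconventions}. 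The algebraic point in showing that $\sigma$ is a cochain map is the identity $h\circ z_P=z_I\circ h$, which is the $A$-linearity of $h$: the contribution to $d\sigma+\sigma d$ coming from the $P$-direction equals $\pm h\circ(s\delta+\delta s)=\pm h\circ z_P$, the contribution from the $I$-direction equals $\pm(\epsilon t+t\epsilon)\circ h=\pm z_I\circ h$, and with the right signs these two contributions cancel.

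Because $P_{-1}=0$ and $I_1=0$ we have $s_{-1}=0$ and $t_0=0$, so the two \emph{cross} terms in $\sigma$ vanish on the images of $i_*$ and $p^*$ respectively. Concretely, for $f\in\Hom_A(P_m,V)$ the element $i_*(f)=i\circ f$ lives in bidegree $(m,0)$ and $\sigma(i_*(f))=\pm\, i_*(s^\vee(f))$; and for $g\in\Hom_A(U,I_{-n})$ the element $p^*(g)=g\circ p$ lives in bidegree $(0,-n)$ and $\sigma(p^*(g))=\pm\, p^*(t_\vee(g))$. Passing to cohomology, $H^*(\sigma)$ is therefore simultaneously equal to $D_z$ (through the isomorphism induced by $i_*$) and to $H^*(t_\vee)$ (through the isomorphism induced by $p^*$), which yields the theorem. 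The main obstacle is the sign bookkeeping required to verify that $\sigma$ is a cochain map and to reconcile the sign $(-1)^n$ in the definition of $t_\vee$ with the sign conventions used for $s^\vee$; the underlying algebraic content, namely $h\circ z_P=z_I\circ h$, is elementary.
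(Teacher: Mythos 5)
Your proposal is correct, and it takes a genuinely different route from the paper. The paper argues entirely at the level of representatives: it takes a cocycle $f\in\Hom_A(P_n,V)$ and a cocycle $g\in\Hom_A(U,I_{-n})$ representing the same class, chooses a homotopy $u$ between the chain maps $i[n]\circ f$ and $g\circ p\colon P\to I[n]$, and shows by direct substitution (using $t\circ\tilde f=0$, $\tilde g\circ s=0$, and the two relations $u\cdot z=u\circ(\delta s+s\delta)$, $z\cdot u=(\epsilon t+t\epsilon)\circ u$) that $\tilde f\circ s-(-1)^n t\circ\tilde g$ is contracted by $u\circ s$ and $t\circ u$. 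You instead build the operator once and globally on the total complex $T=\Hom_A^\bullet(P,I)$ and read off its two edge restrictions; the paper's $u$ is then just a primitive in $T$ witnessing that $i_*(f)$ and $p^*(g)$ are cohomologous, and your well-definedness of $H^*(\sigma)$ absorbs the paper's explicit contraction. What your approach buys is conceptual clarity and reusability (the same $\sigma$ handles all degrees and both identifications simultaneously, and the fact that $i_*$, $p^*$ are quasi-isomorphisms is exactly the "standard facts" the paper already invokes before the statement); what the paper's buys is that no auxiliary double complex or spectral sequence is needed and every sign is pinned down explicitly. Two small points you should tighten: (a) your computation of $d\sigma+\sigma d$ silently assumes that the cross terms $\epsilon\circ h\circ s$ and $t\circ h\circ\delta$ cancel in pairs — the cleanest way to see that everything works is to observe that $h\mapsto\pm h\circ s$ and $h\mapsto\pm t\circ h$ are, separately, the null-homotopies of the \emph{same} chain endomorphism of $T$ (multiplication by $z$, using centrality of $z$ and $A$-linearity of $h$), so their difference is automatically a chain map of degree $-1$; and (b) you should check that the sign $(-1)^n$ in the statement's definition of $t_\vee$ is exactly what makes $\sigma(p^*(g))=p^*(t_\vee(g))$ on the nose rather than up to a degree-dependent sign, since that sign is the one piece of the statement that is not forced by the bare existence of the construction.
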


\begin{proof}
Since $z$ annihilates $V$, the existence of a homotopy $t$ on $I$ 
such that $\epsilon\circ t+t\circ\epsilon$ is equal to multiplication by
$z$ is obvious. The verification that the assignment $g\mapsto (-1)^n
t\circ g$ is a chain map is analogous to the first part of the proof
of Theorem \ref{degreeminusone1}. (The sign $(-1)^n$ comes from the
fact that $g$ is regarded as a chain map $U\to$ $I[n]$, and since the 
differential of $I[n]$ is $(-1)^n\epsilon$, one needs to use the 
homotopy $(-1)^n t$ in order to obtain multiplication by $z$ as the chain 
map determined by this homotopy on $I[n]$.) 

Let $n$ be a non-negative integer.
Let $g\in$ $\Hom_A(U,I_{-n})$ be a cocycle; that is, $\epsilon\circ g=0$.
Note that this is equivalent to stating that $g : U\to I[n]$ is a 
chain map, and hence $\tilde g= g\circ p : P\to I[n]$ is a chain map, where as before
$P$ is a projective resolution of $U$ with differential $\delta$ and quasi-isomorphism
$p : P\to U$. 
Similarly, let $f\in$ $\Hom_A(P_n,V)$ be a cocycle; that is, 
$f\circ \delta=0$. As before, this means that $f : P\to V[n]$ is a chain 
map, and hence $\tilde f = i[n]\circ f : P\to I[n]$ is a chain map.

Assume now that $f$ and $g$ represent the same class in $\Ext^n_A(U,V)$.
This is equivalent to requiring that the chain maps $\tilde f$, $\tilde g$
from $P$ to $I[n]$ are homotopic. 
Thus there is a homotopy $u$ from $P$ to $I[n]$ such that
$$\tilde f - \tilde g = u\circ \delta + (-1)^n\epsilon\circ u , $$
where the sign $(-1)^n$ comes from the fact that the differential of
$I[n]$ is $(-1)^n\epsilon$. 

Let $s$ be a homotopy on $P$ such that $\delta\circ s+s\circ \delta$ is
equal to multiplication by $z$ on $P$. By the construction of $D_z$, the
image of the class of $f$ under $D_z$ is represented by $f\circ s$.
Note that $f\circ s : P\to V[n]$ is a chain map (this was noted
aleady in the proof of $D_z$ being well-defined: since $z$ annihilates
$V$, we have $0=f\cdot z=$ $f\circ\delta\circ s + f\circ s\circ\delta=$
$f\circ s\circ\delta$, where we use the assumption $f\circ\delta=0$,
and hence $f\circ s\circ\delta=0$). We need to show that $f\circ s$ and
$(-1)^n t\circ g$ represent the same class in $\Ext^{n-1}_A(U,V)$.
That is, we need to show that the chain maps $\tilde f \circ s$ and
$(-1)^nt\circ \tilde g$ from $P$ to $I[n-1]$ are homotopic, or equivalently, 
we need to show that their difference $\tilde f\circ s-(-1)^nt\circ \tilde g$
is homotopic to zero. 

Note that $f : P\to V[n]$, and hence also $\tilde f =$ $i[n] \circ f : 
P\to I[n]$, is a chain map which is zero in all degrees other than 
$n$. Since $I[n]$ is zero in all degrees bigger than $n$, it follows 
that $t\circ \tilde f=0$. Similarly, we have $\tilde g\circ s=0$. It 
follows that
\begin{align*}
\tilde f\circ s - (-1)^{n-1} t\circ\tilde g &= 
(\tilde f - \tilde g) \circ s + (-1)^{n-1} t\circ(\tilde f-\tilde g)\\
&=u\circ\delta\circ s + (-1)^n\epsilon\circ u \circ s + 
(-1)^{n-1} t\circ u\circ \delta + (-1)^{2n-1}t\circ\epsilon \circ u\\
&=u\circ\delta\circ s + (-1)^n\epsilon\circ u \circ s + 
(-1)^{n-1} t\circ u\circ \delta - t\circ\epsilon \circ u.
\end{align*}
Since $u\cdot z= u\circ (\delta\circ s+s\circ\delta)$ we have
$u\circ\delta\circ s= u\cdot z - u\circ s\circ\delta$. Similarly,
we have $t\circ\epsilon\circ u = z\cdot u - \epsilon\circ t\circ u$.
Inserting these two equations into the displayed equality and
cancelling $u\cdot z=$ $z\cdot u$ yields the expression
$$- (u\circ s\circ \delta + (-1)^{n-1} \epsilon\circ u \circ s) +
(-1)^{n-1} (t\circ u\circ\delta + (-1)^{n-1}\epsilon \circ t\circ u) .$$
The two summands in this equation are contractible chain maps
from $P$ to $I[n-1]$, via the homotopies $u\circ s$ and $t\circ u$,
respectively. This shows the result.
\end{proof}

\begin{prop} \label{D-functorial}
Let $A$ be a $k$-algebra, let $z\in$ $Z(A)$, and let $n$ be a
nonnegative integer. 

\begin{enumerate}
\item[{\rm (i)}]
The map $D_z : \Ext^n_A(U,V)\to$ $\Ext^{n-1}_A(U,V)$ is functorial 
in $A$-modules $U$ and in $V$ which are annihilated by $z$. 

\item[{\rm (ii)}]
For any $A$-module $U$ and any short exact sequence of $A$-modules 
$$\xymatrix{0\ar[r] & V \ar[r] & W \ar[r] & X \ar[r] & 0}$$
such that $z$ annihilates $U$, $V$, $W$, $X$, the map $D_z$
commutes with the connecting homomorphisms $\gamma^n :
\Ext^n_A(U,X)\to$ $\Ext^{n+1}_A(U, V)$; that is, we have
$$\gamma^{n-1}\circ D_z =  D_z \circ\gamma^n \colon 
\Ext^n_A(U,X)\to \Ext^n_A(U,V)\ .$$

\item[{\rm (iii)}]
For any short exact sequence of $A$-modules
$$\xymatrix{0\ar[r] & U \ar[r] & V \ar[r] & W \ar[r] & 0}$$
and any $A$-module $X$ such that $z$ annihilates $U$, $V$, $W$, $X$,
the map $D_z$ commutes with the connecting homomorphisms $\gamma^n :
\Ext^n_A(U,X)\to$ $\Ext^{n+1}_A(W, X)$.
that is, we have
$$\gamma^{n-1}\circ D_z =  D_z \circ\gamma^n \colon 
\Ext^n_A(U,X)\to \Ext^n_A(W,X) .$$

\end{enumerate} 
\end{prop}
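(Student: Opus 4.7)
My plan is to deduce all three statements from naturality arguments applied to the cochain-level constructions of Theorems~\ref{degreeminusone1} and~\ref{degreeminusone3}. Part~(i) breaks into two separate naturality statements, one for each variable. For functoriality in $V$, fix a projective resolution $(P,\delta)$ of $U$ together with a homotopy $s$ on $P$ satisfying $\delta\circ s + s\circ\delta = z\cdot\mathrm{id}_P$. A homomorphism $\beta\colon V\to V'$ of $A$-modules annihilated by $z$ induces by post-composition a cochain map $\beta_*\colon \Hom_A(P,V)\to \Hom_A(P,V')$, and the identity $\beta_*\circ s^\vee = s^\vee\circ \beta_*$ is immediate because $s^\vee$ is defined by pre-composition with $s$. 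For functoriality in the contravariant variable $U$, it is simplest to switch to the dual description of $D_z$ furnished by Theorem~\ref{degreeminusone3}: fix an injective resolution $(I,\epsilon)$ of $V$ with a homotopy $t$ on $I$, and observe that a homomorphism $\alpha\colon U'\to U$ induces by pre-composition a natural cochain map $\alpha^*\colon \Hom_A(U,I)\to \Hom_A(U',I)$, which commutes on the nose with the post-composition operator $t_\vee$.

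For part~(ii), retain the projective resolution $(P,\delta)$ of $U$ and the homotopy $s$. Since each $P_n$ is projective, the functor $\Hom_A(P_n,-)$ is exact, and applying $\Hom_A(P,-)$ to the short exact sequence $0\to V\to W\to X\to 0$ yields a short exact sequence of cochain complexes
\[
0\to \Hom_A(P,V)\to \Hom_A(P,W)\to \Hom_A(P,X)\to 0.
\]
The operator $s^\vee$ is natural in the coefficient module, hence defines a morphism from this short exact sequence of cochain complexes into its degree shift. The connecting homomorphism $\gamma^n$ is by construction the connecting map in the long exact sequence of cohomology of this sequence; its naturality with respect to the morphism $s^\vee$ therefore delivers $\gamma^{n-1}\circ D_z = D_z\circ \gamma^n$ on applying Theorem~\ref{degreeminusone1} to identify the cohomology of these complexes with the relevant $\Ext$ groups.

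Part~(iii) is entirely dual. Fix an injective resolution $(I,\epsilon)$ of $X$, with a homotopy $t$ on $I$ satisfying $\epsilon\circ t + t\circ\epsilon = z\cdot\mathrm{id}_I$. Since each $I_{-n}$ is injective, applying $\Hom_A(-,I)$ to the short exact sequence $0\to U\to V\to W\to 0$ gives a short exact sequence of cochain complexes
\[
0\to \Hom_A(W,I)\to \Hom_A(V,I)\to \Hom_A(U,I)\to 0.
\]
The operator $t_\vee$ of Theorem~\ref{degreeminusone3} is natural in the contravariant argument, so defines a morphism of this short exact sequence. Naturality of the connecting homomorphism in the associated long exact sequence, combined with Theorem~\ref{degreeminusone3} (to identify the induced degree $-1$ operator on cohomology with $D_z$), then gives the commutation with $\gamma^n$.

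The essential input is Theorem~\ref{degreeminusone3}, which renders the two contravariant statements (functoriality in $U$ in part~(i), and part~(iii)) immediate by symmetry with their covariant counterparts. Without that dual description, the main obstacle would be an explicit lifting-and-comparison argument between homotopies $s$ and $s'$ on two different projective resolutions, following the pattern of Theorem~\ref{degreeminusone1}(iii), with careful attention to the signs laid out in Remark~\ref{signconventions}; with Theorem~\ref{degreeminusone3} available, the whole proposition reduces to routine naturality of the snake-lemma construction applied to a natural transformation of short exact sequences of cochain complexes.
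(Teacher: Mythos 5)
Your proposal is correct, and for parts (i) and (iii) it follows the paper's proof exactly: functoriality in $V$ is immediate because $D_z$ is induced by precomposition with $s$, while functoriality in $U$ and the contravariant connecting homomorphism in (iii) are handled via the injective-resolution description of Theorem~\ref{degreeminusone3}, precisely as the paper does. The one place you genuinely diverge is part (ii): the paper runs an explicit cocycle-level computation, constructing $\gamma^n(\underline{f})$ as the class of $h$ with $i\circ h=(-1)^{n+1}g\circ\delta_{n+1}$ and then checking directly that $h\circ s$ represents both $D_z(\gamma(f))$ and $\gamma(D_z(f))$, using $g\circ(s\circ\delta+\delta\circ s)=0$. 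You instead observe that $s^\vee$ gives a morphism from the short exact sequence of cochain complexes $0\to\Hom_A(P,V)\to\Hom_A(P,W)\to\Hom_A(P,X)\to 0$ to its shift and invoke naturality of the connecting homomorphism. This is cleaner and equally valid, but it quietly relies on a sign check you do not perform: with the conventions of Remark~\ref{signconventions}, the complex $\Hom_A(P[1],V)$ coincides with the naive degree shift of $\Hom_A(P,V)$ (the sign coming from $P[1]$ having differential $-\delta$ cancels against the $(-1)^{n+1}$ convention evaluated one degree lower), so the connecting map of the shifted sequence is $\gamma^{n-1}$ on the nose rather than $-\gamma^{n-1}$. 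The paper's element-by-element argument makes this sign bookkeeping explicit, which is presumably why the authors chose it; your route buys conceptual economy at the cost of having to verify that identification once.
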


\begin{proof}
Let $P$ be a projective resolution of $U$.
Since the operator $D_z$ on $\Ext_A^n(U,V)$ is induced by precomposing 
$f\in$ $\Hom_A(P_n,V)$ with a homotopy $s$ on $P$, the functoriality 
in $V$ is obvious. Using an injective resolution $I$ of $V$ and 
Theorem \ref{degreeminusone3} yields the functoriality in $U$. This 
proves (i). 

Let $U$ be an $A$-module which is annihilated by $z$, and let as 
before $P$ be a projective resolution of $U$. Let
$$\xymatrix{0\ar[r] & V \ar[r]^{i} & W \ar[r]^{p} & X \ar[r] & 0}$$
be a short exact sequence of $A$-modules $V$, $W$, $X$ which are
annihilated by $z$. Since $P$ consists of projective $A$-modules,
applying $\Hom_A(P,-)$ yields a short exact sequence of cochain 
complexes
$$\xymatrix{0\ar[r] & \Hom_A(P,V) \ar[r] & \Hom_A(P,W) \ar[r] & 
\Hom_A(P,X) \ar[r] & 0 .} $$
The connecting homomorphism $\gamma$ associated to this sequence is
constructed as follows. Let $f\in$ $\Hom_A(P_n,X)$ be a cocycle; that 
is, $f\circ\delta_{n+1}=0$. This represents a class $\underline{f}$ in 
$\Ext^n_A(U,X)$. Let $g\in$ $\Hom_A(P_n,W)$ such that $p\circ g=f$. 
Then $g\circ\delta_{n+1}$ satisfies $p\circ g\circ\delta_{n+1}=$ 
$f\circ\delta_{n+1}=0$, so $g\circ\delta_{n+1}$ factors through $i$. 
Let $h\in$ $\Hom_A(P_{n+1}, V)$ such that 
$$i\circ h = (-1)^{n+1} g\circ\delta_{n+1} .$$ 
Then $h$ is a cocycle, and by our sign conventions from Remark
\ref{signconventions} regarding the differential of $\Hom_A(P,W)$, the 
class of $h$ in $\Ext^{n+1}_A(U,V)$ is $\gamma^n(\underline{f})$.  As 
before, we suppress subscripts and superscripts to $\delta$, $s$, 
$\gamma$, and write abusively $h=$ $\gamma(f)$. Now $D_z(\underline{f})$ 
is represented by $f\circ s$, and clearly $g\circ s$ lifts $f\circ s$ 
through $p$. Therefore, by the same construction as before, applied to 
$f\circ s$, the class of $\gamma(f\circ s)$ is represented by the map $m$ 
satisfying $i\circ m=$ $(-1)^n g\circ s\circ\delta$. Since 
$s\circ\delta+\delta\circ s$ is equal to multiplication by $z$ on $P$, 
and since $z$ annihilates $W$ and commutes with $g$, it follows that  
$g\circ(s\circ\delta+\delta\circ s)=$ $0$, and hence 
$$i\circ m=(-1)^n g\circ s \circ\delta= - (-1)^n g\circ\delta\circ s=
i\circ h\circ s .$$
Since $i$ is a monomorphism, this implies $m=$ $h\circ s$. The map $m$ 
represents the class of $\gamma(D_z(f))$, and $h\circ s$ represents the 
class of $D_z(\gamma(f))$. Statement (ii) follows. A similar argument, 
using an injective resolution of $X$ and Theorem \ref{degreeminusone3}, 
yields (iii).
\end{proof}

\begin{remark} \label{TateExt-Remark}
With the notation of Theorem \ref{degreeminusone1}, if $A$ is a 
finite-dimensional selfinjective algebra over a field $k$, then the 
construction principle of degree $-1$ operators in Theorem 
\ref{degreeminusone1} extends to Tate-Ext, by replacing a projective 
resolution of $U$ with a complete resolution of $U$. More precisely, 
let $(P,\delta)$ be a complete resolution of $U$; that is, $P$ is an 
acyclic chain complex of projective $A$-modules together with an
isomorphism $\mathrm{Im}(\delta_0)\cong$ $U$. Then 
$\widehat{\Ext}^n_A(U,V) \cong$ $H^n(\Hom_A(P,V))$ for all integers $n$; 
for $n$ positive this coincides with $\Ext^n_A(U,V)$. If $z\in$ $Z(A)$ 
annihilates $U$, then multiplication by $z$ on $P$ is a chain 
endomorphism which is homotopic to zero, and thus there is a homotopy 
$s$ on $P$ such that $s\circ\delta + \delta\circ s$ is equal to 
multiplication by $z$. (In fact, for this part of the construction, it 
suffices to assume that the endomorphism of $U$ given by multiplication 
with $z$ factors through a projective module). If $z$ also annihilates 
$V$, then just as in the proof of Theorem \ref{degreeminusone1} the 
correspondence sending $f\in$ $\Hom_A(P_n,V)$ to $f\circ s$ induces for
any integer $n$ an operator $\hat D_z : \widehat{\Ext}_A^n(U,V)\to$ 
$\widehat{\Ext}_A^{n-1}(U,V)$, which for $n\geq 2$ coincides with
the operator $D_z$. For $G$ a finite group, the Tate-Hochschild
cohomology $\widehat{\HH}^*(kG)$  of $kG$ admits a centralizer decomposition
analogous to that of $\HH^*(kG)$ in terms of Tate-Ext of centralizers
of group elements, and hence the above construction yields a degree $-1$
operator on $\widehat{\HH}^*(kG)$. It would be interesting to check that this
coincides with the extension of the BV-operator to $\widehat{\HH}^*(kG)$  
in work of  Liu, Wang, and Zhou~\cite{Liu/Wang/Zhou:2021a}. 
\end{remark}

\begin{remark} \label{TorRemark}
A construction analogous to that in Theorem \ref{degreeminusone1} exists for $\Tor$.
Let $A$ be a $k$-algebra, $U$ an $A$-module with  a projective resolution $(P, \delta)$, let
$W$ be a right $A$-module and  let $z\in Z(A)$ such that $z$ annihilates $U$ and $W$.
Then there is a homotopy $s : P\to P[-1]$ such that $s\circ \delta + \delta \circ s$ is
the graded chain endomorphism of $P$ given by multiplication with $z$. Since $z$
also annihilates  $W$, it follows that $\Id_W\ten s : W\tenA P \to W\tenA P[-1]$ is
a chain map. Taking homology yields a degree $1$ operator $\Tor^A_n(W,U)\to$
$\Tor^A_{n+1}(W,U)$, for all $n\geq 0$, which satisfies the formal properties
analogous to those developed for the operators $D_z$ in this section.
A similar construction, in which $z$ is an integer, has been used  for calculating
torsion  in loop space homology in Levi~\cite{Levi:1996b}.  
\end{remark}

\section{The BV operator in terms of homotopies on projective
resolutions}

Let $k$ be a commutative ring. For $G$ a finite group and an element 
$g\in$ $Z(G)$ we denote as before for any positive integer $n$ by 
$\Delta_g = I\circ B \colon H^n(G,k)\to$ $H^{n-1}(G,k)$ the map obtained 
from the long exact sequences \eqref{eq:Connes2}. The following result
shows that $\Delta_g$ can be obtained as a special case of the
construction described in Theorem \ref{degreeminusone1}, implying in 
particular that the component $\Delta_1$ of the BV operator $\Delta$ 
on the summand $H^*(G,k)$ in the centraliser decomposition of 
$\HH^*(kG)$ corresponding to the unit element $1$ of $G$ is zero.

\begin{theorem} \label{thm:Deltag2}
Let $G$ be a finite group and $g\in$ $Z(G)$. With the notation from 
Theorem \ref{degreeminusone1}, we have $\Delta_g =$ $D_{g-1}$. In 
particular, we have $\Delta_1 = D_0 = 0$.
\end{theorem}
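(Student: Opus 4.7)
The plan is to realise the map $(z_g)^*\colon H^n(G,k)\to H^n(\mathbb{Z}\times G,k)$ from Theorem \ref{thm:Deltag} at the chain level on a carefully chosen resolution, and read off the component along $\mu\in H^1(\bZ,k)$ as precisely the operator $D_{g-1}$ built from a homotopy of the type used in Theorem \ref{degreeminusone1}.

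First I would fix a projective resolution $(P,\delta)$ of $k$ over $kG$ and take the standard length-one free resolution $Q$ of $k$ over $k\bZ$, namely $Q_0=Q_1=k\bZ$ with differential $t-1$ where $t$ generates $\bZ$. Since $k\bZ$ and $kG$ are $k$-flat, the tensor product $Q\tenk P$ is a projective resolution of $k$ over $k[\bZ\times G]=k\bZ\tenk kG$. The map $z_g$ makes $P$ into a $k[\bZ\times G]$-module in which $t$ acts as $g$; I would construct a $k[\bZ\times G]$-equivariant chain map $\phi\colon Q\tenk P\to P$ (with $P$ carrying the pulled-back action) lifting the identity on $k$. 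The component $Q_0\tenk P_n\to P_n$ is forced, up to $kG$-automorphisms of $P$, to be $1\ten p\mapsto p$, while the component $Q_1\tenk P_{n-1}\to P_n$ is determined by a family of $kG$-homomorphisms $s_{n-1}\colon P_{n-1}\to P_n$, via $1\ten p\mapsto s_{n-1}(p)$.

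Next I would write out the chain-map condition $\phi\circ d=\delta\circ\phi$ for an element $1\ten p\in Q_1\tenk P_{n-1}$. Using the standard Leibniz rule for the differential on $Q\tenk P$, the left side is $(g-1)p-s_{n-2}(\delta p)$ while the right side is $\delta(s_{n-1}(p))$. So $\phi$ is a chain map precisely when $\delta\circ s+s\circ\delta$ equals multiplication by $g-1$ on $P$, which is exactly the defining property in Theorem \ref{degreeminusone1} applied to the central element $g-1\in Z(kG)$ (which annihilates the trivial module). Since such $s$ always exists, such a $\phi$ exists; conversely, any such $\phi$ yields such a homotopy.

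To finish, I would pull back a cocycle $f\in\Hom_{kG}(P_n,k)$ representing $x\in H^n(G,k)$. Then $f\circ\phi_n$ restricts to $f$ on $Q_0\tenk P_n$ and to $f\circ s_{n-1}$ on $Q_1\tenk P_{n-1}$; under the K\"unneth identification $H^n(\bZ\times G,k)\cong H^n(G,k)\oplus\mu\cdot H^{n-1}(G,k)$ fixed in Section 2, this reads
\[
(z_g)^*(x)=x+\mu\cdot[f\circ s_{n-1}]=x+\mu\cdot D_{g-1}(x).
\]
Comparing with $(z_g)^*=(1,\Delta_g)$ from Theorem \ref{thm:Deltag} gives $\Delta_g=D_{g-1}$. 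The special case $g=1$ gives $\Delta_1=D_0=0$ by Theorem \ref{degreeminusone1}(iii).

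The main obstacle is bookkeeping: matching the sign conventions of Remark \ref{signconventions} (for $\Hom_A(P,V)$ and for shifts $P[1]$) with the signs in the differential of $Q\tenk P$ and in the K\"unneth identification using $\mu$. Provided these match coherently, the argument is a direct comparison of cocycles and produces no unexpected sign.
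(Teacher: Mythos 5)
Your proposal is correct and is essentially the paper's own argument: the paper's Theorem \ref{thm:Pqi1} is precisely your equivalence between $k(\bZ\times G)$-chain maps $P_\bZ\tenk P_G\to z_g^*(P_G)$ lifting the identity and homotopies $s$ with $\delta\circ s+s\circ\delta=g-1$, and the final comparison via Proposition \ref{prop:HZG} and Theorem \ref{thm:Deltag} matches your reading-off of the $\mu$-component. The only part you defer to ``bookkeeping'' --- that the split identification of $H^n(\bZ\times G,k)$ coming from the two-term resolution agrees with the K\"unneth identification via $\mu$ used in Theorem \ref{thm:Deltag} --- is exactly what the paper checks in Proposition \ref{prop:HZG}, so nothing essential is missing.
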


In order to show this, we will make use of Theorem \ref{thm:Deltag}.
As before, we denote by
$$z_g \colon \bZ \times G \to G$$
the group homomorphism sending $(n,h)$ to $g^nh$, where $n\in$ $\bZ$
and $h\in$ $G$. Note that $z_1$ is the canonical projection onto
the second component of $\bZ\times G$.
In order to describe the map induced by $z_g$ on
cohomology, we choose a projective resolution $P_G$ of $k$ as a $kG$-module and 
a projective resolution of $P_\bZ$ of $k$ as a 
$k\bZ$-module. Here $k\bZ$ is the group algebra over $k$ of the infinite
cyclic group $(\bZ, +)$. We will need to describe quasi-isomorphisms
$P_\bZ\tenk P_G\to$ $z_g^*(P_G)$ as complexes of 
$k(\bZ\times G)$-modules. 

\medskip
Identify $k\bZ=$ $k[u,u^{-1}]$ for some indeterminate $u$ via
the unique algebra isomorphism sending $1_\bZ$ to $u$. We choose for
$P_\bZ$ the two-term complex (in degrees $1$ and $0$) of the form
$$\xymatrix{k[u,u^{-1}] \ar[rr]^{u-1} & & k[u,u^{-1}] ,  }$$
where the superscript $u-1$ is the map given multiplication with $u-1$.
This is a projective resolution of $k$ as a $k[u,u^{-1}]$-module, 
together with the augmentation map $k[u, u^{-1}]\to k$ sending $u$ to 
$1$. Note that pairs consisting of an infinite cyclic group and a generator
are unique up to unique isomorphism. Choosing a generator is 
equivalent to choosing a projective resolution of the form above. 

We will make use of the following special case of the 
Tensor-Hom-adjunction. We adopt the following shorthand: for any
$kG$-module $M$ we write
$$M[u,u^{-1}] = k[u,u^{-1}]\tenk M  .$$

\begin{lemma} \label{tensorhomspecial}
With the notation above, let $M$, $N$ be $kG$-modules. We have
a natural isomorphism of $k$-modules
$$\Hom_{k(\bZ\times G)}(M[u,u^{-1}], z_g^*(N))\cong
\Hom_{kG}(M,N)$$
sending a $k(\bZ\times G)$-homomorphism $f \colon M[u,u^{-1}]\to$ 
$z_g^*(N)$ to the $kG$-homomorphism $M\to $ $N$ given by
$m\mapsto$ $f(1\otimes m)$ for all $m\in$ $M$.
\end{lemma}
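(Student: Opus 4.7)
The plan is to recognize this lemma as an instance of Frobenius reciprocity (tensor-hom adjunction) for induction from $G$ to $\bZ \times G$, combined with an observation that restriction along $z_g$ trivializes when further restricted to $G$.

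First, I would identify the left-hand module. Since $k(\bZ\times G) \cong k[u,u^{-1}] \tenk kG$ as $k$-algebras, we have
\[ k(\bZ\times G) \ten_{kG} M = k[u,u^{-1}] \tenk kG \ten_{kG} M \cong k[u,u^{-1}] \tenk M = M[u,u^{-1}] \]
as $k(\bZ\times G)$-modules, where $G$ is embedded in $\bZ\times G$ as $\{0\}\times G$. In other words, $M[u,u^{-1}] = \Ind_G^{\bZ\times G}(M)$. Thus by the standard adjunction between induction and restriction,
\[ \Hom_{k(\bZ\times G)}(M[u,u^{-1}], z_g^*(N)) \cong \Hom_{kG}(M, \Res_G^{\bZ\times G}(z_g^*(N))) . \]

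Next, I would compute the restriction on the right side. The composite
\[ G = \{0\}\times G \hookrightarrow \bZ\times G \xrightarrow{z_g} G \]
sends $h$ to $g^0 h = h$, so it is the identity on $G$. Consequently $\Res_G^{\bZ\times G}(z_g^*(N)) = N$ as a $kG$-module, giving the claimed isomorphism.

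Finally, I would unwind the adjunction to verify the explicit formula. The forward map $f\mapsto(m\mapsto f(1\ten m))$ is $kG$-linear because $f((0,h)(1\ten m)) = (0,h)\cdot_{z_g} f(1\ten m) = h\cdot f(1\ten m)$. For the inverse, given a $kG$-linear $\varphi \colon M\to N$, one defines $\tilde\varphi(u^n\ten m) = g^n\varphi(m)$ and checks $k(\bZ\times G)$-linearity: the $u$-action is compatible since $z_g(1,1)=g$, and the $G$-action is compatible precisely because $g\in Z(G)$ so $g^n h = h g^n$. Naturality in $M$ and $N$ is immediate from the formula. There is no real obstacle here; the lemma is essentially a bookkeeping statement about Frobenius reciprocity, with the slight subtlety being the observation that $z_g$ restricted to the complementary subgroup $\{0\}\times G$ is the identity, which is the key point that makes the adjunction collapse to the clean form stated.
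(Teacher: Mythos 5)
Your proof is correct and follows exactly the route the paper intends: the lemma is introduced there as ``the following special case of the Tensor-Hom-adjunction'' and is stated without proof, and your argument simply fleshes out that adjunction, including the key observation that $z_g$ restricted to $\{0\}\times G$ is the identity and the verification (using $g\in Z(G)$) that the inverse map $u^n\ten m\mapsto g^n\varphi(m)$ is $k(\bZ\times G)$-linear.
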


Let $P_G=$ $(P_n)_{n\geq 0}$ be a projective resolution of the 
trivial $kG$-module, with differential $(\delta_n)_{n\geq 1}$.
We adopt the convention $P_{-1}=\{0\}$ and $\delta_0=0$. 
Note that the $k(\bZ\times G)$-module structure of 
$z^*_g(P_n)$ is given via $u^i \otimes h$ acting as left 
multiplication by $g^ih$ on $P_n$, where $n\geq$ $0$. 
The degree $n$ term of $P_\bZ \tenk P_G$ is equal to 
$$P_n[u,u^{-1}] \oplus P_{n-1}[u,u^{-1}]$$
for any $n\geq 0$.
Denote by $\eta =$ $(\eta_n)_{n\geq 1}$ the differential of 
$P_\bZ \tenk P_G$. Use the same letter $\delta_n$ for the obvious
extension $\Id\otimes \delta_n$ of $\delta_n$ to $P_n[u, u^{-1}]$. 
The differential $\eta$ is given in degree $n\geq 1$ by
\begin{equation} \label{eq:ZGdiff}
\xymatrix{ \eta_n  : & 
 P_n[u,u^{-1}] \oplus P_{n-1}[u,u^{-1}]  
\ar[rrr]^{\left(\begin{smallmatrix} \delta_n & u-1 \\ 
0 & -\delta_{n-1} \end{smallmatrix}\right)} 
& & & P_{n-1}[u,u^{-1}] \oplus P_{n-2}[u,u^{-1}] . } 
\end{equation} 
We describe the identification $H^n(\bZ\times G,k)=$ $H^n(G,k)\oplus
H^{n-1}(G,k)$ in Theorem \ref{thm:Deltag} as follows.

\begin{prop} \label{prop:HZG}
With the notation above, the canonical split exact sequences 
$$\xymatrix{0 \ar[r] & P_n[u,u^{-1}] \ar[r]
&  P_n[u,u^{-1}] \oplus P_{n-1}[u,u^{-1}] \ar[r]
&  P_{n-1}[u,u^{-1}]   \ar[r] & 0}$$
define a degreewise split short exact sequence of chain complexes
of $k(\bZ\times G)$-modules
$$\xymatrix{0 \ar[r] & k[u,u^{-1}]\tenk P_G \ar[r] 
& P_\bZ\tenk P_G \ar[r] & k[u,u^{-1}]\tenk P_G[1] \ar[r] & 0}\ .$$ 
Applying $\Hom_{k(\bZ\times G)}(-,k)$, with the appropriate signs for 
the differentials, yields a short exact sequence of cochain complexes 
of $k$-modules
$$\xymatrix{0 \ar[r] & \Hom_{kG}(P_G[1],k) \ar[r] 
& \Hom_{k(\bZ\times G)}(P_\bZ\tenk P_G,k) \ar[r] 
& \Hom_{kG}(P_G,k) \ar[r] & 0}$$ 
which splits canonically, and hence yields a canonical identification
$$H^n(\bZ\times G, k) = H^{n}(G,k) \oplus H^{n-1}(G,k) .$$
This is the identification in Theorem \ref{thm:Deltag}.
\end{prop}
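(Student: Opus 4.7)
My plan is to proceed in four steps, with the bulk of the work being bookkeeping of signs and a comparison with the \mbox{K\"unneth} identification used in Theorem \ref{thm:Deltag}.

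First, I would verify that $P_\bZ \tenk P_G$ is a projective resolution of the trivial $k(\bZ\times G)$-module $k$, and read off the claimed short exact sequence of complexes directly from the matrix form \eqref{eq:ZGdiff} of its differential. Because $P_\bZ$ is concentrated in degrees $0$ and $1$, the degree-$n$ term decomposes canonically as $P_n[u,u^{-1}] \oplus P_{n-1}[u,u^{-1}]$, and the upper-triangular shape of $\eta_n$ shows immediately that $k[u,u^{-1}]\tenk P_G$ is a subcomplex (the first summand is preserved by $\eta_n$) while the quotient is $k[u,u^{-1}] \tenk P_G[1]$, using that the induced differential on the second factor is $-\delta_{n-1}$, which matches the shift convention of Remark \ref{signconventions}. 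Splitness in each degree is built in.

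Second, I would apply $\Hom_{k(\bZ\times G)}(-,k)$ and use Lemma \ref{tensorhomspecial} with $N = z_g^*(k) = k$ (since $g$ acts trivially on $k$) to rewrite each term. Concretely, $\Hom_{k(\bZ\times G)}(P_n[u,u^{-1}], k) \cong \Hom_{kG}(P_n,k)$ naturally, and similarly for the shifted complex. Because the short exact sequence of complexes was degreewise split, its image under the contravariant Hom functor remains a degreewise split short exact sequence of cochain complexes, and the connecting homomorphism in the associated long exact cohomology sequence vanishes. This yields the canonical direct sum decomposition
\[ H^n(\bZ\times G, k) \cong H^n(G,k) \oplus H^{n-1}(G,k). \]

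Third, and this is where the care is needed, I would check that this identification agrees with the one from Section 2 obtained via the K\"unneth formula using $\mu \in H^1(\bZ,k)$. For this, observe that the resolution $P_\bZ$ represents $H^1(\bZ,k) = \Hom(\bZ,k)$ by assigning to a cocycle $f\colon k[u,u^{-1}] \to k$ the element $f(1)\in k$, and this isomorphism sends the class of the cocycle corresponding to the identity to $\mu$, since our chosen resolution is aligned with the generator $1 \in \bZ$. Composing with the K\"unneth isomorphism between $H^*(\bZ\times G,k)$ and $H^*(\bZ,k)\tenk H^*(G,k)$ (the latter being free on $1$ and $\mu$ over $H^*(G,k)$) then shows that the splitting arising from our short exact sequence is precisely the splitting used in Theorem \ref{thm:Deltag}.

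The main obstacle will be this last compatibility check: tracking the signs and the placement of $\mu$ through the identifications (including the shift convention for $P_G[1]$) so that the projection onto the second summand $H^{n-1}(G,k)$ really is the coefficient of $\mu$ rather than some twist of it. Once that is pinned down, the proposition follows.
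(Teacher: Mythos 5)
Your first step (reading the degreewise split short exact sequence off the upper--triangular form \eqref{eq:ZGdiff}) and your use of Lemma \ref{tensorhomspecial} match the paper. The genuine gap is in your second step, where you deduce the direct sum decomposition of $H^n(\bZ\times G,k)$ from the claim that ``the connecting homomorphism in the associated long exact cohomology sequence vanishes'' \emph{because} the short exact sequence of cochain complexes is degreewise split. Degreewise splitness never forces the connecting homomorphism to vanish: every mapping-cone sequence $0\to A\to \mathrm{cone}(f)\to B[1]\to 0$ is degreewise split, and its connecting map is (up to sign) induced by $f$ itself. Indeed, the chain-level sequence you start from is degreewise split but does \emph{not} split as a sequence of complexes --- the off-diagonal entry $u-1$ in \eqref{eq:ZGdiff} is precisely the obstruction. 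So as written your argument proves nothing about the connecting map, and the decomposition of $H^n(\bZ\times G,k)$ does not yet follow.

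What actually makes the cochain sequence split as a sequence of complexes (and hence kills the connecting map) is a specific feature of the coefficient module: $u$ acts as the identity on the trivial module $k$, so for any $k(\bZ\times G)$-homomorphism $f\colon P_n[u,u^{-1}]\to k$ one has $f((u-1)\cdot v) = (u-1)f(v)=0$. Hence the off-diagonal entry $u-1$ of $\eta_n$ becomes zero after applying $\Hom_{k(\bZ\times G)}(-,k)$, and the resulting cochain complex is literally the direct sum of $\Hom_{kG}(P_G[1],k)$ and $\Hom_{kG}(P_G,k)$ with block-diagonal differential. This observation is the heart of the paper's proof and is the step your proposal is missing. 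Your third step --- matching the resulting splitting with the K\"unneth identification via $\mu$ --- is the right compatibility to check and is in the same spirit as the paper, but you only announce it as ``the main obstacle''; the paper completes it by noting that $\mu$ is represented by the $1$-cocycle $k[u,u^{-1}]\to k$ sending $u$ to $1$, and that the inclusion $\Hom_{kG}(P_G[1],k)\to \Hom_{k(\bZ\times G)}(P_\bZ\tenk P_G,k)$ sends a class $x\in H^{n-1}(G,k)$ to the image of $\mu\ten x$ in $H^n(\bZ\times G,k)$.
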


\begin{proof}
The exact sequence of chain complexes of $k(\bZ\times G)$-modules is a 
special case of a tensor product of the chain complex $P_G$ 
with a two-term chain complex and easily verified. 
The exact sequence of cochain complexes of $k$-modules is obtained by 
applying the contraviariant functor $\Hom_{k(\bZ\times G)}(-,k)$ to the 
previous sequence and then using the canonical adjunctions 
$$\Hom_{k(\bZ\times G)}(P_n[u,u^{-1}],k) \cong \Hom_{kG}(P_n,k)$$
from Lemma \ref{tensorhomspecial}.
The fact that this sequence splits canonically follows from the
observation that multiplication by $u-1$ has image in the
kernel of any $k(\bZ\times G)$-homomorphism $P_n[u,u^{-1}] \to k$
and so the non-diagonal entry $u-1$ in the differential 
\eqref{eq:ZGdiff} of 
$P_\bZ\tenk P_G$ becomes zero upon applying the functor 
$\Hom_{k(\bZ\times G)}(-,k)$. To see that this is the identification
in Theorem  \ref{thm:Deltag}, consider the class $\mu$ in
$H^1(\bZ,k)$ corresponding to the group homomorphism $\bZ\to k$ sending 
$1_\bZ$ to $1_k$. This is the class of the $1$-cocycle (abusively still 
denoted by the same letter) $\mu : k[u,u^{-1}]\to k$ sending $u$ to 
$1$. The explicit description of the maps in the statement implies that 
upon taking cohomology in degree $n$, the map $\Hom_{kG}(P_G[1],k) \to$
$\Hom_{k(\bZ\times G)}(P_\bZ\tenk P_G,k)$ induces a map which sends 
$x\in$ $H^{n-1}(G,k)$ to the image of $\mu \ten x$ in 
$H^n(\bZ\times G,k)$, as required. 
\end{proof}

For $n\geq 0$, denote by 
$$e_n \colon P_n[u,u^{-1}] \to z_g^*(P_n)$$
the $k(\bZ\times G)$-homomorphism defined by $e_n(u^i\otimes v)=$
$g^i v$ for all $i\in$ $\bZ$ and $v\in$ $P_n$. Equivalently, $e_n$
corresponds to the identity on $P_n$ under the adjunction from
Lemma \ref{tensorhomspecial}. The following theorem parametrises
homotopies in Theorem \ref{degreeminusone1} in terms of certain
quasi-isomorphisms $P_\bZ\tenk P_G\to$ $z_g^*(P_G)$ lifting the
identity on $k$. 

\begin{theorem} \label{thm:Pqi1} 
With the notation above, for any $n\geq -1$ let 
$f_n \colon P_n[u,u^{-1}]\to$
$z_g^*(P_{n+1})$ be a $k(\bZ\times G)$-homomorphism and let
$s_n \colon P_n\to$ $P_{n+1}$ be the corresponding $kG$-homomorphism 
sending $a\in$ $P_n$ to $f_n(1\otimes a)$. The following are equivalent.
\begin{enumerate}
\item The graded $k(\bZ\times G)$-homomorphism 
$$(e_n, f_{n-1})_{n\geq 0} \colon P_\bZ\tenk P_G\to z_g^*(P_G)$$
is a quasi-isomorphism of chain complexes which lifts the identity 
on $k$.
\item The graded $kG$-homomorphism 
$$(s_{n-1})_{n\geq 0} \colon P_G[1]\to P_G$$
is a homotopy with the property that the chain map
$\delta\circ s + s\circ \delta$ is equal to the endomorphism of $P_G$ 
given by left multiplication with $g-1$.
\end{enumerate}
\end{theorem}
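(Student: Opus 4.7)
The plan is to reduce the entire equivalence to a direct matrix computation, made possible by the adjunction of Lemma \ref{tensorhomspecial}. Using that adjunction, the data of a $k(\bZ\times G)$-homomorphism $f_n \colon P_n[u,u^{-1}] \to z_g^*(P_{n+1})$ is the same as the data of the $kG$-homomorphism $s_n\colon P_n\to P_{n+1}$ given by $s_n(a)=f_n(1\otimes a)$; the correspondence is just $f_n(u^i\otimes a)= g^i s_n(a)$. Thus the correspondence in the statement is bijective at the level of graded maps, and the only thing to check is that ``chain map + lifts the identity on $k$'' on the left translates into the homotopy identity on the right.

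For the chain map condition, I would compute both $(e_{n-1},f_{n-2})\circ\eta_n$ and $\delta_n\circ (e_n,f_{n-1})$ on a typical element $(u^i\otimes v,\,u^j\otimes w)\in P_n[u,u^{-1}]\oplus P_{n-1}[u,u^{-1}]$. Using the matrix form \eqref{eq:ZGdiff} of $\eta_n$, the fact that $e_m$ sends $u^i\otimes x$ to $g^i x$, and $f_{m}(u^j\otimes y) = g^j s_m(y)$ (since $u$ acts on $z_g^*(P_{m+1})$ as multiplication by $g$), the terms $g^i\delta_n(v)$ appear on both sides and cancel. What remains, after dividing by $g^j$, is exactly the identity
\[
\delta_n\circ s_{n-1} + s_{n-2}\circ \delta_{n-1} = (g-1)\cdot \mathrm{Id}_{P_{n-1}}
\]
for all $n\geq 1$. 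Reading this for all $n$ (with the convention $s_{-1}=0$, which corresponds to $f_{-1}=0$ under the adjunction) is precisely the statement that $\delta\circ s + s\circ\delta$ equals multiplication by $g-1$ on $P_G$. This is the main computation, and it is essentially bookkeeping; the only point of care is the sign in the lower-right entry $-\delta_{n-1}$ of the matrix $\eta_n$, which is what makes the signs in the homotopy identity come out correctly.

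Next I would verify that under these equivalent conditions, the chain map automatically lifts the identity on $k$. The augmentation of $P_\bZ\tenk P_G$ sends $u^i\otimes v\in k[u,u^{-1}]\tenk P_0$ to $\epsilon(v)\in k$ (since $u\mapsto 1$), and composing $e_0$ with the augmentation of $z_g^*(P_G)$ sends $u^i\otimes v$ to $\epsilon(g^iv)=\epsilon(v)$, since $k$ is the trivial module. So the augmentation square commutes with no further condition on $s$.

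Finally I would observe that a chain map between two complexes, both of whose homology is concentrated in degree $0$ and equal to $k$, is a quasi-isomorphism as soon as it induces the identity there. The complex $P_\bZ\tenk P_G$ is a projective resolution of $k$ as a $k(\bZ\times G)$-module, and the underlying complex of $z_g^*(P_G)$ coincides with $P_G$, so its homology is also $k$ concentrated in degree $0$. Hence once we have a chain map lifting the identity on $k$, it is automatically a quasi-isomorphism, and the equivalence (i)$\Leftrightarrow$(ii) follows. The only genuine obstacle is the sign-sensitive matrix computation in the second step; the rest is formal.
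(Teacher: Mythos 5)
Your proof is correct and follows essentially the same route as the paper's: the matrix computation of the chain-map condition against the differential \eqref{eq:ZGdiff}, the observation that the first component holds automatically, and the adjunction of Lemma \ref{tensorhomspecial} converting the second component into the homotopy identity $\delta\circ s + s\circ\delta = (g-1)\cdot\mathrm{Id}$. Your explicit element-level verification of the augmentation square and of the quasi-isomorphism property (both complexes being resolutions of $k$) just spells out what the paper dispatches in one sentence.
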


\begin{proof}
In degree $0$, the map $e_0$ clearly lifts the identity on $k$, so
we need to show that $(e_n, f_{n-1})_{n\geq 0}$ is a chain map
if and only if $s\circ\delta + \delta\circ s$ is equal to 
multiplication by $g-1$. By the definition of the differential of
$P_\bZ\tenk P_Q$, we have that $(e_n, f_{n-1})_{n\geq 0}$ is
a chain map if and only if for any $n\geq 1$ we have
$$(e_{n-1}, f_{n-2}) \circ
\begin{pmatrix} \delta_n & u-1 \\ 0 & -\delta_{n-1} \end{pmatrix}
 = \delta_n \circ (e_n, f_{n-1}), $$
where we use as before the same letter $\delta_n$ for the extension 
$\Id\otimes \delta_n$ of $\delta_n$ to $P_n[u,u^{-1}]$. This is 
equivalent to
$$(e_{n-1}\circ\delta_n, e_{n-1}\circ (u-1)-f_{n-2}\circ\delta_{n-1})=
(\delta_n\circ e_n, \delta_n\circ f_{n-1}) ,$$
where we have used the notation $(u-1)$ for the map given by
multiplication with $u-1$. In the first component, this holds
automatically since $(e_n)_{n\geq 0}$ is a chain map. Thus the 
previous condition is equivalent to
$$e_{n-1}\circ (u-1) = 
\delta_n\circ f_{n-1} + f_{n-2}\circ \delta_{n-1} .$$
Note that the left side is equal to $(g-1)\circ e_{n-1}$, where $(g-1)$ 
denotes the map given by multiplication with $g-1$. Through the obvious 
versions of the adjunction from Lemma \ref{tensorhomspecial}, this is 
equivalent to the statement that $\delta\circ s+s\circ\delta$ is equal 
to left multiplication by $g-1$ on $P_G$ as stated.
\end{proof}

\begin{proof}[{Proof of Theorem \ref{thm:Deltag2}}]
The fact that $s^\vee$ is a cochain map is the special case of
Theorem \ref{degreeminusone1}, applied to $A=kG$, $U=V=k$,
and $z=$ $g-1$. Thus $H^n(s^\vee)=$ $D_{g-1}$, for $n\geq$ $0$. 
By Theorem \ref{thm:Pqi1} (and with the notation of that theorem)
the graded map $(s_{n-1})_{n\geq 0} \colon P_G[1]\to $ $P_G$ induces
a quasi-isomorphism
$$(e_n, f_{n-1})_{n\geq 0} \colon P_\bZ\otimes_k P_G\to z_g^*(P_G)$$
which lifts the identity on $k$. Applying the functor
$\Hom_{k(\bZ\times G)}(-,k)$, with the appropriate signs for the
differentials, and making use of the adjunction
Lemma \ref{tensorhomspecial}, yields a cochain map
$$\Hom_{kG}(P_G,k) \to \Hom_{k(\bZ\times G)}(P_\bZ \otimes_k P_G,k) .$$
Taking cohomology, and using the identification in Proposition 
\ref{prop:HZG}, yields for any $n\geq 0$ a map 
$$H^n(G,k) \to H^n(\bZ\times G, k)=H^n(G,k)\oplus H^{n-1}(G,k) .$$
By construction, the second component of this map is induced by
$s^\vee$, hence equal to $D_{g-1}$. By Theorem \ref{th:(1,Delta)},
the second component is also equal to $\Delta_g$ as stated.
\end{proof}

The following result is Theorem \ref{th:main} (ii). 

\begin{prop} \label{kunneth-prop2}
Let $G$, $H$ be finite groups, let $g\in$ $Z(G)$ and $h\in$ $Z(H)$.
Let $i$, $j$ be nonnegative integers, let $x\in$ $H^i(G,k)$ and
$y\in$ $H^j(H,k)$. Identify $k(G\times H)\cong$ $kG\tenk kH$ and
identify $x\ten y$ with its canonical image in $H^{i+j}(G\times H,k)$.
Then $\Delta_{(g,h)}(x\ten y)$ is equal to the canonical image of 
$$\Delta_g(x) \ten y + (-1)^i x\ten \Delta_h(y)\ $$
in $H^{i+j-1}(G\times H, k)$. 
\end{prop}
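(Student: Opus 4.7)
My plan is to reduce this Künneth-type statement for $\Delta$ to the Künneth formula for the operators $D_z$ from Proposition~\ref{kunneth-prop1}, using the identification $\Delta_g = D_{g-1}$ from Theorem~\ref{thm:Deltag2}.

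First, identifying $k(G\times H)$ with $kG\tenk kH$, I would apply Theorem~\ref{thm:Deltag2} three times, obtaining
\[ \Delta_g = D_{g-1},\qquad \Delta_h = D_{h-1},\qquad \Delta_{(g,h)} = D_{(g,h)-1}, \]
where on the right these operators are computed for the algebras $kG$, $kH$, and $kG\tenk kH$ respectively, and all three central elements annihilate the relevant copies of the trivial module $k$. The key algebraic manipulation is then
\[ (g,h) - 1 \;=\; (g-1)\ten 1 \;+\; 1\ten (h-1) \;+\; (g-1)\ten (h-1) \]
inside $Z(kG\tenk kH)$, which is a direct verification on expanding the right-hand side.

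By the additivity statement Theorem~\ref{degreeminusone2}(i), applied to the three summands above (each of which lies in the centre and annihilates the trivial $k(G\times H)$-module $k$), we get
\[ D_{(g,h)-1} \;=\; D_{(g-1)\ten 1} + D_{1\ten (h-1)} + D_{(g-1)\ten (h-1)}. \]
The last term vanishes by the final clause of Proposition~\ref{kunneth-prop1}, which asserts $D_{z\ten w}=0$ whenever $z$ annihilates $U,U'$ and $w$ annihilates $V,V'$. Applying the main Künneth diagram of Proposition~\ref{kunneth-prop1} with $A=kG$, $B=kH$, $U=U'=V=V'=k$, $z=g-1$, $w=h-1$, evaluating on $x\ten y$ yields
\[ D_{(g-1)\ten 1 + 1\ten (h-1)}(x\ten y) \;=\; D_{g-1}(x)\ten y + (-1)^i\, x \ten D_{h-1}(y), \]
with the sign $(-1)^i$ being precisely the one dictated by the convention for the second component $1\ten D_w$ stated there. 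Substituting back the identifications $D_{g-1}=\Delta_g$, $D_{h-1}=\Delta_h$, $D_{(g,h)-1}=\Delta_{(g,h)}$ gives the claim.

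The main obstacle is purely bookkeeping: one must make sure the three tensor decomposition terms all satisfy the hypotheses of Theorem~\ref{degreeminusone2} and Proposition~\ref{kunneth-prop1} (central, and annihilate the trivial module on both sides), and that the sign produced by the Künneth operator $1\ten D_w$ matches the sign $(-1)^i$ required in the statement. Both are immediate once set up, so no serious calculation remains; the content is entirely in Theorem~\ref{thm:Deltag2} and Proposition~\ref{kunneth-prop1}, which have already done the cochain-level work for us.
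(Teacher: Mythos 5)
Your proposal is correct and follows essentially the same route as the paper: both reduce to Proposition~\ref{kunneth-prop1} via the identification $\Delta_g=D_{g-1}$, exploit the identity relating $(g\ten h)-(1\ten 1)$ to $(g-1)\ten 1$, $1\ten(h-1)$ and $(g-1)\ten(h-1)$, kill the cross term by the final clause of Proposition~\ref{kunneth-prop1}, and conclude by additivity of $D$ together with the K\"unneth diagram. The only cosmetic difference is that you expand $(g,h)-1$ as a sum of three terms while the paper solves the same identity for $(g-1)\ten(h-1)$; the content is identical.
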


\begin{proof}
We apply Proposition \ref{kunneth-prop1} to the case $A=kG$, $U=U'=k$, 
$B=kH$, $V=V'=k$, $z=g-1$, $w=h-1$. We have
$$\Delta_g(x) \ten y + (-1)^i x\ten \Delta_h(y) =
D_{g-1}(x) \ten y + (-1)^i x\ten D_{h-1}(y) . $$
By Proposition \ref{kunneth-prop1}, the image in 
$H^{i+j}(G\times H, k)$ of this element is equal to
$$D_{(g-1)\ten 1 + 1\ten (h-1)}(x \ten y) .$$
It follows from the last statement in Proposition \ref{kunneth-prop1}
that $D_{(g-1)\ten (h-1)}=0$.  Since 
$$(g-1)\ten (h-1) = 
(g\ten h)-(1\ten 1) - ((g-1)\ten 1) - (1\ten (h-1)))$$
this implies that 
$$0 = D_{(g-1)\ten (h-1)}= D_{(g \ten h)-(1\ten 1)} -
D_{(g-1)\ten 1} - D_{1\ten (h-1)}$$
or equivalently, 
$$D_{(g \ten h)-(1\ten 1)} = D_{(g-1)\ten 1} + D_{1\ten (h-1)} =
D_{(g-1)\ten 1 + 1\ten (h-1)} . $$
The left side in the last equation is $\Delta_{(g,h)}$, whence the 
result. 
\end{proof}

The following proposition implies the statements (v) and (iv) in 
Theorem \ref{th:main}.

\begin{prop} \label{Delta-g-additive}
Let $G$ be a finite group and $g$, $h\in$ $Z(G)$. 
\begin{enumerate}
\item[{\rm (i)}]
We have $\Delta_{gh} = \Delta_g+\Delta_{h}$.
\item[{\rm (ii)}]
We have $\Delta_{g^m}=m\Delta_g$ for any
positive integer $m$. 
\item[{\rm (iii)}]
If the order of $g$ is invertible in $k$, then $\Delta_g=0$. In 
particular, if $k$ is a field of prime characteristic $p$ and $g$ a 
$p'$-element in $Z(G)$, then $\Delta_g=0$.
\item[{\rm (iv)}]
If $k$ is a field of prime characteristic $p$, then $\Delta_g=$
$\Delta_{g_p}$, where $g_p$ is the $p$-part of $g$.
\item[{\rm (v)}]
If $k$ is a field of prime characteristic $p$, denoting by $P$ a
Sylow $p$-subgroup of $G$, we have $g_p\in$ $Z(P)$, and for any
positive integer $n$ the diagram 
\[ \xymatrix{H^n(G,k)\ \ar@{>->}[r]^{\Res_{G,P}} \ar[d]_{\Delta_g} & 
H^n(P,k) \ar[d]^{\Delta_{g_p}} \\
H^{n-1}(G,k)\ \ar@{>->}[r]^{\Res_{G,P}} & H^{n-1}(P,k)} \]
is commutative with injective horizontal maps.
\end{enumerate}
\end{prop}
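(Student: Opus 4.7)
The plan is to deduce all five statements from the identification $\Delta_g=D_{g-1}$ in Theorem \ref{thm:Deltag2}, combined with the formal algebraic properties of the $D_z$ operators in Theorem \ref{degreeminusone2} and the naturality already proved in Corollary \ref{co:derivation}. Nothing here requires returning to the topological side.

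For (i), I would expand
\[
gh-1 = (g-1) + (h-1) + (g-1)(h-1)
\]
in $Z(kG)$. Both $g-1$ and $h-1$ annihilate the trivial $kG$-module $k$, so their product is annihilated by either factor, and Theorem \ref{degreeminusone2} (iii) gives $D_{(g-1)(h-1)}=0$. Then Theorem \ref{degreeminusone2} (i) turns the above decomposition into $D_{gh-1} = D_{g-1} + D_{h-1}$, which via Theorem \ref{thm:Deltag2} is exactly $\Delta_{gh}=\Delta_g+\Delta_h$. Statement (ii) is then immediate by induction on $m$.

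For (iii), let $n$ be the order of $g$ and assume $n$ is invertible in $k$. Then $g^n=1$, and by (ii) together with $\Delta_1 = D_0 = 0$ (Theorem \ref{thm:Deltag2}) we get $n\Delta_g=\Delta_{g^n}=0$; invertibility of $n$ forces $\Delta_g=0$. The $p'$-element case over a field of characteristic $p$ is the obvious specialisation. For (iv), write $g=g_pg_{p'}$ with both factors in $Z(G)$ (since $g$ is central, so are its $p$- and $p'$-parts, and they commute); then (i) gives $\Delta_g = \Delta_{g_p}+\Delta_{g_{p'}}$, and (iii) kills the second summand.

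For (v), the central $p$-subgroup $\langle g_p\rangle$ of $G$ is contained in every Sylow $p$-subgroup, so $g_p\in P\cap Z(G)\subseteq Z(P)$, and in particular $\Delta_{g_p}$ makes sense on both $H^*(G,k)$ and $H^*(P,k)$. Combining (iv) on the $G$-side with the naturality under the inclusion $P\hookrightarrow G$ applied to the element $g_p$ (Corollary \ref{co:derivation} (iii)) yields
\[
\Res_{G,P}\circ\Delta_g \;=\; \Res_{G,P}\circ\Delta_{g_p} \;=\; \Delta_{g_p}\circ\Res_{G,P},
\]
giving commutativity of the diagram. Injectivity of $\Res_{G,P}$ is the classical fact that, in characteristic $p$, the composite $\frac{1}{[G:P]}\Tr_{P,G}\circ\Res_{G,P}$ is the identity on $H^*(G,k)$, since $[G:P]$ is invertible in $k$.

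There is no serious obstacle in this proof: every part is a formal consequence of earlier results. The only point worth flagging is that for (v) one really needs (iv) first, because restriction does not a priori intertwine $\Delta_g$ (on the $G$-side) with $\Delta_{g_p}$ (on the $P$-side) — one has to first replace $\Delta_g$ on $H^*(G,k)$ by $\Delta_{g_p}$ before invoking naturality along $P\hookrightarrow G$.
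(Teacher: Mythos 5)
Your proof is correct and follows essentially the same route as the paper's: the identity $(g-1)(h-1)=(gh-1)-(g-1)-(h-1)$ combined with Theorem \ref{degreeminusone2} (i), (iii) and the identification $\Delta_g=D_{g-1}$ for (i)--(iv), and then Corollary \ref{co:derivation} (iii) plus the standard transfer argument for (v). Your closing remark that (iv) must precede (v) is also exactly the logical order the paper uses.
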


\begin{proof}
The identity $(g-1)(h-1)=$ $(gh-1)-(g-1)-(h-1)$ implies that 
$D_{(g-1)(h-1)}=$ $D_{gh-1}-D_{g-1}-D_{h-1}$. By Theorem
\ref{degreeminusone2} (iii) we have $D_{(g-1)(h-1)}=0$, and hence 
$D_{gh-1}=$ $D_{g-1} + D_{h-1}$. By Theorem \ref{thm:Deltag2} this 
yields the equality stated in (i), and (ii) is an immediate consequence 
of (i). By Theorem \ref{thm:Deltag2} we have $\Delta_1=0$. Thus (iii) 
follows from (ii). Assume that $k$ is a field of prime characterisic 
$p$. Note that since $g$ is central, so are the $p$-part $g_p$ and the 
$p'$-part $g_{p'}$ of $g$. By (i) we have $\Delta_g=$ $\Delta_{g_p}+$ 
$\Delta_{g_{p'}}$, and by (iii) this is equal to $\Delta_{g_p}$, whence 
(iv). Since $g$, and hence also $g_p$, is central, it follows that $g_p$
is contained in any Sylow $p$-subgroup of $G$. Statement (v) follows 
from (iv), the compatibility of $\Delta_{g_p}$ with restriction from 
$G$ to $P$ by Corollary \ref{co:derivation} (iii), together with the 
standard fact that the restriction map $H^*(G,k)\to$ $H^*(P,k)$ is 
injective.
\end{proof}

\section{The BV operator on the bar resolution} \label{bar-res-section}

Let $k$ be a commutative ring, $G$ a finite group and $g\in$ $Z(G)$.
The purpose of this section is to calculate explicitly a homotopy
$s$ on the bar resolution of the trivial $kG$-module as in Theorem 
\ref{thm:Deltag2}.
This will be used in the proof of Theorem \ref{th:DeltaH2}. 
As before, we denote by $z_g \colon \bZ\times G\to$ $G$ the group 
homomorphism sending $(m,h)$ to $g^m h$.

\medskip
We choose for $P_G$ the projective resolution of the trivial $kG$-module
which in degree $n$ term is equal to $kG^{n+1}$, where $n\geq$ $0$ and 
where $G^{n+1}$ is the direct product of $n+1$ copies of $G$, with 
differential $\delta_n$ given for $n\geq$ $1$ by
$$\delta_n(a_0,a_1,..,a_n) = 
\sum_{i=1}^{n-1} (-1)^i (a_0,..,a_ia_{i+1},.., a_n)
+(-1)^n (a_0,a_1,..,a_{n-1})$$
where the $a_i$ are elements in $G$. 
The left $kG$-module structure on the terms $kG^{n+1}$ is given by left 
multiplication with $G$ on the first copy of $G$. (This is the 
resolution obtained from tensoring the Hochschild resolution of $kG$ by 
$-\otimes_{kG} k$.) In particular, in degree $1$, we have
$$\delta_1(a_0,a_1) = a_0a_1-a_0 .$$
As earlier, we identify $k\bZ=k[u,u^{-1}]$ and $k(\bZ\times G)=$ $k[u,u^{-1}]\tenk kG$. 
The $k(\bZ\times G)$-module structure of $z^*_g(kG^{n+1})$
is given by the action of $u^i \otimes h$ acting as left multiplication
by $g^i h$ on the first copy of $G$ in $G^{n+1}$. 

\begin{theorem} \label{thm:Deltag3}
With the notation above, for any $n\geq$ $1$ denote by 
$s_{n-1} \colon kG^n\to$ $kG^{n+1}$ the $kG$-homomorphism defined by
$$s_{n-1}(a_0,a_1,..,a_{n-1}) = 
\sum_{i=0}^{n-1} (-1)^i (a_0,..,a_i, g, a_{i+1},..,a_{n-1}) ,$$
where $a_i\in$ $G$ for $0\leq i\leq n-1$. Applying the functor
$\Hom_{kG}(-,k)$ to the graded map 
$$s= (s_{n-1})_{n\geq 1} \colon P_G[1] \to P_G$$
yields a map of  cochain complexes of $k$-modules
$$s^\vee \colon \Hom_{kG}(P_G,k) \to \Hom_{kG}(P_G[1],k)$$
such that, for any $n\geq$ $1$, we have
$$\Delta_g = H^n(s^\vee) \colon H^n(G,k) \to H^{n-1}(G,k) .$$
\end{theorem}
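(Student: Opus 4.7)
By Theorem \ref{thm:Deltag2} we have $\Delta_g = D_{g-1}$, and by Theorem \ref{degreeminusone1} the operator $D_{g-1}$ on $\Ext^*_{kG}(k,k)= H^*(G,k)$ is computed, via the functor $\Hom_{kG}(-,k)$, from any homotopy $s$ on the projective resolution $P_G$ satisfying
\[
\delta_{n+1}\circ s_n + s_{n-1}\circ \delta_n = (g-1)\cdot \mathrm{id}_{P_n}
\qquad (n\geq 0),
\]
the result being independent of the choice of $s$. Hence the theorem reduces to verifying this identity for the explicit $s$ given in the statement.

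The verification is a direct expansion on $(a_0, \ldots, a_n)\in G^{n+1}=P_n$. The composition $\delta_{n+1}\circ s_n$ is a double sum indexed by the insertion position $i\in\{0, \ldots, n\}$ (from $s_n$) and the face $j\in\{0, \ldots, n+1\}$ of $\delta_{n+1}$ (with $j=n+1$ interpreted as the ``drop last entry'' face of sign $(-1)^{n+1}$), carrying signs $(-1)^i$ and $(-1)^j$. The composition $s_{n-1}\circ\delta_n$ is similarly a double sum in which $\delta_n$ acts first and then $g$ is inserted. I would group the terms into two families and analyse them as follows. First, the \emph{off-diagonal} terms of $\delta_{n+1}\circ s_n$---those where $\delta_{n+1}$ acts at least two positions away from the inserted $g$, or drops the last entry $a_n$ without dropping $g$---are in explicit bijection with the terms of $s_{n-1}\circ\delta_n$ via the obvious matching of output tuples; a sign comparison, shifting the insertion position by $1$ whenever the collapse occurs to the left of $g$, shows these pairs appear with opposite signs and cancel. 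Second, the \emph{$g$-neighbour} terms, in which $\delta_{n+1}$ combines the inserted $g$ with its right neighbour $a_{i+1}$ (face $j=i+1$, $0\leq i\leq n-1$), pair with those where $g$ is combined with its left neighbour $a_{i'}$ (face $j=i'$, $1\leq i'\leq n$); setting $i'=i+1$ and using centrality of $g$ (so that $g a_{i+1}=a_{i+1}g$) yields identical tuples with opposite signs that cancel telescopically.

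What remains are two boundary contributions from $\delta_{n+1}\circ s_n$: the $(i,j)=(0,0)$ term, combining $a_0$ with the inserted $g$ at its right to produce $+(a_0 g, a_1, \ldots, a_n)=g\cdot(a_0, \ldots, a_n)$; and the $i=n$ drop-last contribution applied to $(a_0, \ldots, a_n, g)$, which drops the terminal $g$ with total sign $(-1)^n\cdot(-1)^{n+1}=-1$, producing $-(a_0, \ldots, a_n)$. Their sum is exactly $(g-1)(a_0, \ldots, a_n)$, establishing the homotopy identity; combined with the reduction in the first paragraph, this proves the theorem.

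The main obstacle is the careful bookkeeping of signs in the off-diagonal and telescoping cancellations, which are routine but intricate. A sanity check in degree $n=1$, where $\delta_2\circ s_1 + s_0\circ\delta_1 = (g-1)\cdot\mathrm{id}$ on $kG^2$ is verified by a short direct computation, confirms the pattern. A conceptually cleaner reformulation, carrying the same content, would invoke Theorem \ref{thm:Pqi1}: one checks that the associated $k(\bZ\times G)$-linear map $P_\bZ\tenk P_G\to z_g^*(P_G)$ built from the $s_n$ is a quasi-isomorphism lifting the identity on $k$, which by that theorem is equivalent to the homotopy identity above.
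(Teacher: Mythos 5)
Your proposal is correct and follows essentially the same route as the paper: the paper likewise reduces the theorem to checking that $\delta\circ s+s\circ\delta$ is multiplication by $g-1$ (via Theorem \ref{thm:Pqi1}, which is the equivalence underlying your appeal to Theorems \ref{thm:Deltag2} and \ref{degreeminusone1}), and then carries out the same direct expansion on the bar resolution in Theorem \ref{thm:Pqi2}. Your identification of the cancelling pairs and of the two surviving boundary terms $+(a_0g,a_1,\ldots,a_n)$ and $-(a_0,\ldots,a_n)$ matches the paper's computation, including the use of centrality of $g$ for the neighbour terms.
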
 

\begin{proof}
By Theorem \ref{thm:Pqi1}, it suffices to show that $s$ is a
homotopy such that $\delta\circ s+ s\circ \delta$ is the chain
map given by multiplication with $g-1$.
This is the content of the next theorem, whence the result.
\end{proof}

As in the previous section, for $n\geq 0$, denote by 
$$e_n \colon kG^{n+1}[u,u^{-1}] \to z_g^*(kG^{n+1})$$
the $k(\bZ\times G)$-homomorphism defined by $e_n(u^m\otimes h)=$
$g^m h$ for all $m\in$ $\bZ$ and $h\in$ $G$. 
For $n\geq$ $1$ denote by 
$$ f_{n-1} \colon kG^{n}[u, u^{-1}] \to z_g^*(kG^{n+1}) $$
the $k(\bZ\times G)$-homomorphism given by
$$f_{n-1}(u^m\otimes (a_0,a_1,..,a_{n-1})) = \sum_{j=0}^{n-1} 
(-1)^j (g^{m}a_0,.., a_j, g, a_{j+1},..,a_{n-1}) , $$
with $m\in\bZ$ and the $a_j$ in $G$. Through an adjunction as in 
Lemma \ref{tensorhomspecial}, the map $f_{n-1}$ corresponds to the 
$kG$-homomorphism $s_{n-1} \colon kG^n\to$ $kG^{n+1}$ defined in the previous 
theorem.

\begin{theorem} \label{thm:Pqi2}
With the notation above, the following hold.

\begin{enumerate}

\item 
The graded $k(\bZ\times G)$-homomorphism 
$(e_n, f_{n-1})_{n\geq 0} \colon P_\bZ\tenk P_G\to z_g^*(P_G)$
is a quasi-isomorphism which lifts the identity on $k$.

\item 
The graded $kG$-homomorphism $(s_{n-1})_{n\geq 1} \colon P_G[1]\to P_G$ is a 
homotopy with the property that $\delta\circ s + s\circ \delta$ is equal
to the endomorphism of $P_G$ given by left multiplication with $g-1$.

\end{enumerate}
\end{theorem}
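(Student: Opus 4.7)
By Theorem \ref{thm:Pqi1}, statements (1) and (2) are equivalent, so it suffices to verify (2), namely that the graded map $s = (s_{n-1})_{n \geq 1}$ defined in Theorem \ref{thm:Deltag3} satisfies
\[ (\delta \circ s + s \circ \delta)(a_0, a_1, \ldots, a_{n-1}) = (g-1)\cdot(a_0, a_1, \ldots, a_{n-1}) = (g a_0, a_1, \ldots, a_{n-1}) - (a_0, a_1, \ldots, a_{n-1}) \]
for every $n \geq 1$ and every tuple $(a_0, \ldots, a_{n-1}) \in G^n$. This is a direct bookkeeping calculation with the bar differential, and I expect nothing beyond careful sign tracking and the centrality of $g$.

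The first step is to expand $\delta_{n+1} s_{n-1}(a_0, \ldots, a_{n-1})$ by applying $\delta_{n+1}$ term-by-term to the sum $\sum_{i=0}^{n-1}(-1)^i(a_0, \ldots, a_i, g, a_{i+1}, \ldots, a_{n-1})$. Each summand produces contributions of three kinds: adjacent products $a_j a_{j+1}$ not touching $g$; the two ``$g$-merges'' $a_i\cdot g$ on the left and $g\cdot a_{i+1}$ on the right; and the final coordinate-drop. The second step is to expand $s_{n-2}\delta_n(a_0, \ldots, a_{n-1})$ in the analogous way: apply $\delta_n$ to obtain the adjacent-merge terms and the coordinate-drop, then insert $g$ in each possible position using $s_{n-2}$.

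The main combinatorial step is the matching of terms. For each pair $(i, j)$ with $j \neq i, i+1$, the contribution ``merge $a_j$ with $a_{j+1}$'' inside the $i$-th summand of $\delta \circ s$ cancels with its counterpart in $s \circ \delta$, where the merge is performed first and $g$ is inserted afterwards; the cancellation is obtained by comparing the sign $(-1)^i$ from $s$ with the position-dependent sign from $\delta$, taking into account the index shift caused by the $g$-insertion. The coordinate-drop contributions cancel similarly except at the boundary. Among the ``$g$-merge'' terms, the left-merge $a_i g$ from the $i$-th summand cancels the right-merge $g a_{i-1}$ from the $(i-1)$-st summand, using the centrality of $g$ (so that $a_i g = g a_i$); this is a telescoping sum in $i$. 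The only survivors are the left-merge at $i = 0$, which yields $(g a_0, a_1, \ldots, a_{n-1})$, and the surviving boundary piece at the top summand, which collapses to $-(a_0, \ldots, a_{n-1})$, giving precisely $(g-1)\cdot(a_0, \ldots, a_{n-1})$.

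The main obstacle is purely organisational: the many signs and index shifts must all line up. To pin down the signs and guard against off-by-one errors I would first verify the base case $n = 1$ by hand: since $s_{-1} = 0$, we have $(\delta \circ s + s \circ \delta)(a_0) = \delta_1(a_0, g) = a_0 g - a_0$, which equals $g a_0 - a_0 = (g-1)\cdot a_0$ by centrality of $g$. This anchors the sign conventions and isolates the role of centrality, after which the general telescoping argument above can be carried out uniformly in $n$.
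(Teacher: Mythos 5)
Your proposal follows the paper's proof exactly: invoke Theorem \ref{thm:Pqi1} to reduce to statement (2), then verify that $\delta\circ s+s\circ\delta$ is multiplication by $g-1$ by expanding both sides on the bar resolution and matching terms, with the same classification of summands (adjacent merges cancelling against $s\circ\delta$, the $g$-merges telescoping via centrality of $g$, and the coordinate drops) and the same two surviving terms $(ga_0,a_1,\dots,a_{n-1})$ and $-(a_0,\dots,a_{n-1})$. The only blemish is a small index slip --- the left-merge $a_ig$ in the $i$-th summand cancels the right-merge $ga_i$ (not $ga_{i-1}$) arising from the $(i-1)$-st summand --- which does not affect the argument.
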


\begin{proof}
By Theorem \ref{thm:Pqi1}, the two statements are equivalent.
We prove the second statement. That is, for $n\geq 1$ and $x=$ 
$(a_0,a_1,..,a_{n-1}) \in$ $P_{n-1}$, we need to prove the equality
$$\delta_n(s_{n-1}(x)) = (g-1)x - s_{n-2}(\delta_{n-1}(x)) . $$
We start with the left side. We have
$$\delta_n(s_{n-1}(x)) = \sum_{j=0}^{n-1} \ 
(-1)^j \delta_n(a_0,..,a_j,g ,a_{j+1},..,a_{n-1}) .$$
We need to calculate the summands
$$\delta_n(a_0,..,a_j, g, a_{j+1},..,a_{n-1})\ .$$
The definition of $\delta_n$ yields an alternating sum over an index
$i$ running from $0$ to $n-1$, which we will need to break up according 
to wether $0\leq i<j$, $i=j$, $i=j+1$, $j+1<i\leq n-1$. We have

\begin{align*}
\delta_n(a_0,..,a_j,g,a_{j+1},..,a_{n-1}) 
 &= \sum_{i=0}^{j-1} 
(-1)^i (a_0,..,a_ia_{i+1},.., a_j, g, a_{j+1},.., a_{n-1}) \\
 &\qquad + (-1)^j (a_0,..,a_jg, a_{j+1},..,a_{n-1}) \\
 &\qquad + (-1)^{j+1} (a_0,..,a_j,ga_{j+1},..,a_{n-1}) \\
 &\qquad + \sum_{i=j+2}^{n-1} (-1)^j 
(a_0,..,a_j,g,a_{j+1},..,a_{i-1}a_i,..,a_{n-2}) \\
 &\qquad + (-1)^n(a_0,..,a_j,g, a_{j+1},..,a_{n-2}).
\end{align*}

For $j=0$ or $j=n-1$ or $j=n-2$ this formula takes a slightly different 
form. If $j=0$, then the first sum is empty (hence zero by convention). 
The fourth term (that is, the sum indexed $\sum_{i=j+2}^{n-1}$) is
empty if $j$ is one of $n-1$, $n-2$, so zero. In addition, if $j=n-1$, 
then the third term does not appear (because the component $ga_{j+1}$ 
is not defined in that case) and the last summand takes the form 
$(-1)^n(a_0,..,a_{n-1})$, and if $j=n-2$, then the last summand takes 
the form  $(-1)^n(a_0,..,a_{n-2}, g)$. It follows that

\begin{align*}
\delta_n(s_{n-1}(x)) 
&= \sum_{j=1}^{n-1} \sum_{i=0}^{j-1} (-1)^{i+j}
(a_0,..,a_ia_{i+1},.., a_j, g, a_{j+1},..,a_{n-1}) \\
 & \qquad + \sum_{j=0}^{n-1} (a_0,..,a_jg, a_{j+1},..,a_{n-1}) \\
 & \qquad - \sum_{j=0}^{n-2} (a_0,..,a_j, ga_{j+1},..,a_{n-1}) \\
 & \qquad + \sum_{j=0}^{n-3} \sum_{i=j+2}^{n-1} 
(-1)^{i+j} (a_0,..,a_j, g, a_{j+1},.., a_{i-1}a_i, ..,a_{n-1}) \\
 & \qquad + \sum_{j=0}^{n-1} (-1)^{n+j} 
(a_0,..,a_j,g, a_{j+1},..,a_{n-2}).
\end{align*}

Note that in the last term the summand for $j=n-1$ takes the form
$-(a_0,a_1,...,a_{n-1})$. 
Since $g$ is central, the third sum cancels against the second sum for 
$j\geq$ $1$. Also, we may reindex the second double sum and replace 
$i$ by $i-1$. This yields

\begin{align*}
\delta_n(f_n(x)) 
&= \sum_{j=1}^{n-1} \sum_{i=0}^{j-1} (-1)^{i+j}
(a_0,..,a_ia_{i+1},.., a_j, g, a_{j+1},..,a_{n-1}) \\
 & \qquad + (a_0g,a_1,...,a_{n-1}) \\
 & \qquad + \sum_{j=0}^{n-3} \sum_{i=j+1}^{n-2} 
(-1)^{i+j+1} (a_0,..,a_j, g, a_{j+1},.., a_{i}a_{i+1}, ..,a_{n-1}) \\
 & \qquad + \sum_{j=0}^{n-1} (-1)^{n+j} 
(a_0,..,a_j, g, a_{j+1},..,a_{n-2}),
\end{align*}
where as before the summand for $j=n-1$ in the last sum is $-(a_0,a_1,...,a_{n-1})$.
We need to show that this is equal to the expression

\begin{align*} (g-1)x - s_{n-2}(\delta_{n-1}(x))
&= (ga_0,a_1,..,a_{n-1})-(a_0,..,a_{n-1}) \\
& \qquad 
- \sum_{i=0}^{n-2} (-1)^i s_{n-2}(a_0,..,a_ia_{i+1},..,a_{n-1})\\
& \qquad - (-1)^{n-1} s_{n-2}(a_0,..,a_{n-2}).
\end{align*}

Expanding $s_{n-2}$ and adjusting indexing yields that this is equal to 

\begin{align*}
& (ga_0,a_1,..,a_{n-1})  -(a_0,..,a_{n-1}) \\
& \qquad + \sum_{i=0}^{n-2}\sum_{j=0}^{i-1} (-1)^{i+j+1} 
(a_0,..,a_j, g, a_{j+1},.., a_ia_{i+1},.., a_{n-1}) \\
& \qquad +\sum_{i=0}^{n-2}\sum_{j=i+1}^{n-1} (-1)^{i+j} 
(a_0,..,a_ia_{i+1},..,a_{j}, g, a_{j+1},..,a_{n-1})\\
& \qquad + \sum_{j=0}^{n-2} (-1)^{n+j} 
(a_0,..,a_j, g, a_{j+1},..,a_{n-2}).
\end{align*}

We need to match all summands to those for the expression
of $\delta_n(s_{n-1}(x))$. The first summand cancels against the
second summand in $\delta_n(s_{n-1}(x))$. The second summand cancels
agains the summand for $j=n-1$ in the fourth (and last) sum of
$\delta_n(s_{n-1}(x))$. The last sum cancels against the remaining
summands of the fourth sum in $\delta_n(s_{n-1}(x))$.
The first double sum cancels against the second double sum in
$\delta_n(s_{n-1}(x))$ because both can be written as sum indexed by
pairs $(i,j)$ with $0\leq i<j\leq n-2$. Similarly, 
the remaining double sum cancels against
the first double sum in $\delta_n(s_{n-1}(x))$ because both can be
written as a sum indexed by pairs $(i,j)$ with $0\leq i<j \leq n-1$.
\end{proof}

\section{The BV operator in degree $2$}\label{appSection}

For convenience, we restate Theorem \ref{th:main} (xi). 
For $x$, $y$ elements in a group $G$ we write $[x,y]$ for 
$xyx^{-1}y^{-1}$, and we recall that $[x,yz]=[x,y]\,y[x,z]y^{-1}$. 

\begin{theorem}\label{th:DeltaH2}
Let $G$ be a finite group, $g\in Z(G)$, and $x\in$ $H^2(G,k)$. 
Suppose that $x$ corresponds to a central extension
\[ 1 \to k^+ \to  K \to G \to 1. \]
For any  $h\in G$, choose an inverse image $\hat h \in K$. 
Then identifying $H^1(G,k)$ with $\Hom(G,k)$, we have
\[ [\hat g,\hat h] = \Delta_g(x)(h) \in k^+. \]
\end{theorem}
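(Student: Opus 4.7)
The plan is to compute $\Delta_g(x)$ concretely using the explicit homotopy on the bar resolution provided by Theorem \ref{thm:Deltag3}, match the result against the commutator $[\hat g,\hat h]$ computed directly from the extension, and check that the two agree.

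First I would fix a set-theoretic section $h\mapsto \hat h$ of the projection $K\to G$ and let $\alpha\colon G\times G\to k^+$ be the normalised $2$-cocycle defined by $\hat{h_1}\hat{h_2}=\alpha(h_1,h_2)\,\widehat{h_1h_2}$, so that the class $x$ is represented by the $kG$-linear cochain $f\in\Hom_{kG}(kG^3,k)$ determined by $f(1,h_1,h_2)=\alpha(h_1,h_2)$. (One verifies in the usual way that the cocycle condition for the bar differential $\delta_3$ coincides with the associativity condition coming from $(\hat{h_1}\hat{h_2})\hat{h_3}=\hat{h_1}(\hat{h_2}\hat{h_3})$.)

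Next, I would apply the formula of Theorem \ref{thm:Deltag3} in degree $n=2$, using
\[ s_1(a_0,a_1)=(a_0,g,a_1)-(a_0,a_1,g). \]
This gives $\Delta_g(x)$ as the class of the $kG$-linear cochain $f\circ s_1\colon kG^2\to k$. Evaluating at $(1,h)$ and using $kG$-linearity together with the trivial action on $k$, I obtain
\[ \Delta_g(x)(h) = f(1,g,h) - f(1,h,g) = \alpha(g,h)-\alpha(h,g)\in k^+, \]
so $\Delta_g(x)\in H^1(G,k)=\Hom(G,k)$ is the map $h\mapsto \alpha(g,h)-\alpha(h,g)$.

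Finally, I would compute $[\hat g,\hat h]$ directly inside $K$. Centrality of $g$ in $G$ gives $gh=hg$, hence $\widehat{gh}=\widehat{hg}$. Using $\hat g\hat h=\alpha(g,h)\widehat{gh}$ and $\hat h\hat g=\alpha(h,g)\widehat{hg}$, and the fact that $k^+\subseteq Z(K)$, I get
\[ [\hat g,\hat h]=\hat g\hat h(\hat h\hat g)^{-1}=\alpha(g,h)\alpha(h,g)^{-1}, \]
which, upon passing from multiplicative notation in $k^+\subset K$ to additive notation in $k^+$, becomes precisely $\alpha(g,h)-\alpha(h,g)$. Comparing with the previous step completes the proof.

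The only real subtlety I expect to encounter is bookkeeping: matching the sign conventions of the bar differential as stated in Section \ref{bar-res-section} with the conventional $2$-cocycle arising from the central extension, and being careful when switching between the multiplicative group law in $K$ and the additive group law in $k^+$. Once these conventions are fixed consistently, each of the two calculations above is short and direct.
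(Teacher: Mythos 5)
Your proposal is correct and follows essentially the same route as the paper: both use the explicit homotopy $s_1(a_0,a_1)=(a_0,g,a_1)-(a_0,a_1,g)$ from Theorem \ref{thm:Deltag3} to compute $\Delta_g(x)(h)=\alpha(g,h)-\alpha(h,g)$ and then identify this difference with the commutator $[\hat g,\hat h]$. The only (harmless) divergence is that you compute the commutator directly from $\widehat{gh}=\widehat{hg}$ using the centrality of $g$, which lets you skip the paper's reduction to the abelian subgroup $\langle g,h\rangle$ and its appeal to Lemma \ref{lem:2cocycleswitch}.
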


We combine Theorem \ref{thm:Deltag3} and the following Lemma to give 
a proof of Theorem \ref{th:DeltaH2}.

\begin{lemma} \label{lem:2cocycleswitch}
Let $1\to Z\to \hat G\to G\to 1$ be a central extension of an
abelian group $G$. For $g\in$ $G$ choose an inverse image $\hat g$ of 
$g$ in $\hat G$ such that $\hat{1}_G=$ $1_{\hat G}$. Let 
$\alpha \colon G\times G\to$ $Z$ be the $2$-cocycle defined by 
$$\alpha(g,h)= \hat g\hat h \widehat{gh}^{-1}$$
for $g$, $h\in G$. Let $\beta, \gamma \colon G\times G\to$ $Z$ be
the maps defined by
\begin{gather*}
\beta(g,h) = \alpha(h,g),\\
\gamma(g,h) = [\hat g, \hat h] = 
\hat g\hat h \hat g^{-1}\hat h^{-1}
\end{gather*}
for $g$, $h\in G$. Then $\beta$, $\gamma$ are $2$-cocycles, and
we have $\alpha\beta^{-1} = \gamma$.
\end{lemma}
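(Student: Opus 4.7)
The key observation driving the proof is that when $G$ is abelian, the elements $\hat g \hat h$ and $\hat h\hat g$ of $\hat G$ both project to $gh=hg$ in $G$, so their quotient lies in $Z$ and can be read off directly from the cocycle $\alpha$.

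The plan is to start from the defining equation $\hat g \hat h = \alpha(g,h)\widehat{gh}$ (which merely rewrites the definition $\alpha(g,h)=\hat g\hat h\widehat{gh}{}^{-1}$), and similarly $\hat h\hat g=\alpha(h,g)\widehat{hg}=\alpha(h,g)\widehat{gh}$, where the second equality uses that $G$ is abelian so $hg=gh$. Multiplying the first equation on the right by the inverse of the second then gives
\[
\hat g\hat h\,(\hat h\hat g)^{-1} \;=\; \alpha(g,h)\,\alpha(h,g)^{-1}
\]
in $\hat G$. The left side is exactly the commutator $[\hat g,\hat h]=\gamma(g,h)$, while the right side is $\alpha(g,h)\beta(g,h)^{-1}$. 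This is the desired identity $\alpha\beta^{-1}=\gamma$, and is the only genuine computation in the proof.

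It remains to verify that $\beta$ and $\gamma$ are $2$-cocycles. For $\gamma$ I would argue directly that it is bimultiplicative: since $Z\subseteq Z(\hat G)$, the standard commutator identities $[\hat x\hat y,\hat z]=[\hat x,\hat z][\hat y,\hat z]$ and $[\hat x,\hat y\hat z]=[\hat x,\hat y][\hat x,\hat z]$ collapse because the twisting factors are central. Independence from the choice of lift follows from the same centrality, so $\gamma\colon G\times G\to Z$ is a well-defined bimultiplicative map, and any such map (with $G$ acting trivially on $Z$) satisfies the $2$-cocycle condition by a routine expansion of both sides. For $\beta$, one can either verify the cocycle condition by substituting $(g,h,k)\mapsto(k,h,g)$ into the cocycle identity for $\alpha$ and invoking abelianness of $G$ to rewrite $\alpha(k,hg)=\alpha(k,gh)$ and $\alpha(hk,g)=\alpha(kh,g)$; or, more elegantly, simply note that $\beta=\alpha\gamma^{-1}$ is a product of cocycles and hence a cocycle itself.

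There is no serious obstacle: the statement is really an exercise in unwinding definitions, with the single nontrivial input being that $\widehat{gh}=\widehat{hg}$ when $G$ is abelian. The only point that requires mild care is keeping the multiplicative (non-abelian) notation for $\hat G$ straight while $G$ itself is abelian, so that the cancellation $\widehat{gh}\,\widehat{hg}{}^{-1}=1$ is applied correctly rather than being absorbed silently.
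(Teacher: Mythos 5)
Your proof is correct; the paper itself dismisses this lemma with ``All parts are trivial verifications,'' and your argument supplies exactly the intended unwinding of definitions, with the one genuinely needed input ($\widehat{hg}=\widehat{gh}$ because $G$ is abelian, so $\hat g\hat h(\hat h\hat g)^{-1}=\alpha(g,h)\alpha(h,g)^{-1}$ lands in $Z$ and equals $[\hat g,\hat h]$) handled correctly. The observations that $\gamma$ is lift-independent and bimultiplicative by centrality of $Z$, and that $\beta=\alpha\gamma^{-1}$ is then automatically a cocycle, close the remaining claims cleanly.
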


\begin{proof}
All parts are trivial verifications.
\end{proof}

\begin{proof}[{Proof of Theorem \ref{th:DeltaH2}}]
By  Theorem~\ref{th:main}\,(iii),  $\Delta_g$ commutes with restriction to  subgroups of $G$ 
containing $g$. Hence, it suffices to consider the case where
$G=\langle g,h\rangle$. Therefore, in order to prove the formula for 
$\Delta_g(x)(h)$ we may assume that $G$ is abelian. 
Note that with the notation of Theorem \ref{thm:Deltag3} we have 
$$s_1(a_0,a_1) = (a_0,g, a_1) - (a_0,a_1,g)$$
and hence its dual $(s_1)^\vee$ sends a $kG$-homomorphism
$\zeta \colon kG^3\to$ $k$ to the $kG$-homomorphism 
$\zeta\circ s_1 \colon kG^2\to$ $k$ sending $(a_0,a_1)$ to
$\zeta(a_0, g, a_1) - \zeta(a_0,a_1,g)$. The identification of 
$H^2(G;k)$ in terms of classes of cocycles is given via the adjunction 
map 
$$\Hom_{kG}(kG^3,k)\cong \Hom_k(kG^2,k)$$ 
sending a $kG$-homomorphism $\zeta \colon kG^3\to$ $k$ to the $k$-linear map 
$kG^2\to$ $k$ determined by the assignment $(a_1,a_2)\mapsto$ 
$\zeta(1,a_1,a_2)$. Similarly, the identification $H^1(G,k)=$ 
$\Hom(G,k^+)$ is given via the adjunction $\Hom_{kG}(kG^2,k)\cong$ 
$\Hom_k(kG,k)$. Thus if $\alpha$ is a $2$-cocyle representing the class 
$x$, then the $1$-class determined by $\alpha\circ s_1$ is given by the 
assignment
$$a_1 \mapsto \alpha(g, a_1) - \alpha(a_1, g)$$
By Lemma \ref{lem:2cocycleswitch} applied with $Z=k^+$ (written
additively) this is equal to the map 
$$a_1 \mapsto [\hat a_1, \hat g] , $$
and writing $h$ instead of $a_1$, we get that 
$\Delta_g(x)(h)=[\hat h, \hat g]$ for all $h\in$ $G$ as claimed.
\end{proof}

\section{Examples}\label{ExamplesSection}

Throughout this section $p$ is a prime and $k$ is a field of 
characteristic $p$. As a consequence of Theorem~\ref{th:main}\,(vii), in
order to calculate the maps $\Delta_g$, where $g$ is a central 
element in a finite group $G$, it suffices to calculate in the
case where $G$ is a finite $p$-group. See \cite{Adem/Milgram:1994a} for
background material.

\begin{eg} \label{cyclic-Example}
Let $G$ be a finite cyclic $p$-group and let $g\in$ $G$.
Then $H^*(G,k)$ contains a polynomial subalgebra $k[x]$ with
$x$ in degree $2$, such that $H^*(G,k)$ is generated as a module over 
$k[x]$ by $1$ and a degree $1$ element $y$. Then $x$ is in the 
image of the map $H^2(G,\bZ)\to$ $H^*(G,k)$, and hence 
$\Delta_g(x)=0$ by
Theorem~\ref{th:main}\,(x). Moreover, by
Theorem~\ref{th:main}\,(ix)
we have $\Delta_g(y)=$ $y(g)$. Thus,
using that $\Delta_g$ is a derivation by Theorem~\ref{th:main}\,(i), 
for any nonnegative integer $n$, we have $\Delta_g(x^n)=0$ and
$\Delta_g(x^ny)=$ $x^n y(g)$. 

Using the canonical identification $\HH^*(kG)=$ 
$kG\tenk H^*(G,k)$ from Holm~\cite{Holm:1996a}, Cibils and 
Solotar~\cite{Cibils/Solotar:1997a}, this determines the BV operator 
$\Delta$ on $\HH^*(kG)$ as follows: we have 
\begin{gather*}
\Delta(g \ten x^n) = 0, \\
\Delta(g\ten x^ny) = y(g)\cdot g \ten x^n.
\end{gather*}
Other papers dealing with BV and Gerstenhaber structures on 
Hochschild cohomology of cyclic groups
include S\'anchez-Flores~\cite{SanchezFlores:2012a},
Yang~\cite{Yang:2013a},
Angel and Duarte~\cite{Angel/Duarte:2017a}.
\end{eg}

\begin{eg} \label{abelian-Example}
Let $G$ be a finite abelian $p$-group, and let $g\in$ $G$. Combining 
Example~\ref{cyclic-Example} with the K\"unneth formula
Proposition \ref{kunneth-prop2} determines $\Delta_g$. Using again
the canonical identification of algebras $\HH^*(kG)=kG\tenk H^*(G,k)$
from \cite{Holm:1996a}, \cite{Cibils/Solotar:1997a}, the BV operator 
$\Delta$ on $\HH^*(kG)$ is determined by 
$\Delta(g \ten \zeta) = g \ten \Delta_g(\zeta)$.
\end{eg}

\begin{eg}\label{eg:quaternion}
Let $G$ be the generalised quaternion group $Q_{2^n}$ 
of order $2^n$ $(n\ge 3)$, and let $\gamma$ be the
central element of order two in $G$. Then
$H^*(G,\bF_2)$, is generated by two elements $x$ and $y$ in
degree one and an element $z$ of degree four. The elements $x$
and $y$ are nilpotent, and generate a finite Poincar\'e duality
algebra with top degree three. Since $z$ is not a zero divisor, we have
\[ \sum_{i=0}^\infty t^i\dim_{\bF_2}H^i(Q_{2^n},\bF_2)
= \frac{1+2t+2t^2 +t^3}{1-t^4}. \]
The exact relations between $x$ and $y$ depend on whether $n=3$
or $n>3$. If $n=3$ then $x^2+xy+y^2=0$ and $x^2y+xy^2=0$,
whereas if $n>3$ then the relations are $xy=0$ and $x^3+y^3=0$.
Again using 
Theorem~\ref{th:main}\,(ix), and using the fact that $\gamma$ belongs to
the derived subgroup of $G$,
we have
$\Delta_\gamma(x)=\Delta_\gamma(y)=0$, and it remains to
compute $\Delta_\gamma(z)$.

In both cases, $H^3(G,\bF_2)$ is one dimensional, and
we have $H^3(G,\bZ)=0$. Moreover, there is an element
$z'\in H^4(G,\bZ)$ with $2^nz'=0$, and such that $z$
is the image of $z'$ under the reduction mod two map
$H^4(G,\bZ) \to H^4(G,\bF_2)$. Therefore $\Delta_\gamma(z')=0$,
and the commutativity of the diagram
\[ \xymatrix{H^4(G,\bZ) \ar[r]^{\Delta_\gamma} \ar[d] &
H^3(G,\bZ) \ar[d] \\
H^4(G,\bF_2) \ar[r]^{\Delta_\gamma}\ar[r] & H^3(G,\bF_2)} \]
shows that $\Delta_\gamma(z)=0$. 
Since $\Delta_\gamma$ is a derivation, it follows that
it is zero on all elements of $H^*(G,\bF_2)$.

Other papers considering the BV structure on Hochschild cohomology of 
quaternion groups include Ivanov, Ivanov, Volkov and
Zhou~\cite{Ivanov/Ivanov/Volkov/Zhou:2015a},
Ivanov~\cite{Ivanov:2016a}.
\end{eg}

\begin{eg}\label{eg:dihedral}
Let $G$ be the dihedral group of order $2^n$ with $n\ge 3$,
\[ G=\langle g,h \mid g^2=h^2=1,\ (gh)^{2^{n-1}}=1\rangle, \]
and let $\gamma$ be the central 
involution $(gh)^{2^{n-2}}$ in $G$. Then $H^*(G,\bF_2)=\bF_2[x,y,z]/(xy)$, where 
$|x|=|y|=1$ and $|z|=2$. Using 
Theorem~\ref{th:main}\,(ix) and the fact that $\gamma$ belongs to
the derived subgroup of $G$,
we have $\Delta_\gamma(x)=\Delta_\gamma(y)=0$, so it remains to
evaluate $\Delta_\gamma(z)$. Now $z$ corresponds to the
central extension
\[ 1 \to \bF_2^+ \to Q \to G \to 1 \]
where $Q$ is the generalised quaternion group of order $2^{n+1}$
\[ Q=\langle \hat g,\hat h\mid \hat g^2 =\hat h^2=(\hat g \hat h)^{2^{n-1}},
  (\hat g\hat h)^{2^n}=1\rangle. \]
The inverse image of $\gamma$ in $Q$ is not central, so it follows from
Theorem~\ref{th:main}\,(xi) that $\Delta_\gamma(z)\ne 0$. Now
$G$ has an automorphism of order two swapping $g$ and $h$,
and swapping $x$ and $y$. It lifts to an automorphism of
$Q$ swapping $\hat g$ and $\hat h$, and therefore fixes $z$.
So it also fixes $\Delta_\gamma(z)$. The only non-zero fixed
element of degree one is $x+y$, and therefore we have 
\[ \Delta_\gamma(z)=x+y. \]
Since $\Delta_\gamma$ is a derivation, this determines its
value on all elements of $H^*(G,\bF_2)$.  We have
\[ \Delta_\gamma(x^iz^j)=jx^{i+1}z^{j-1},\qquad
\Delta_\gamma(y^iz^j)=jy^{i+1}z^{j-1}. \]
\end{eg}

\begin{remark}
As well as the proof given in  Section~\ref{appSection}, Theorem~\ref{th:main}\,(xi)  
can be proved by a direct computation  as follows. 
Note that $[\hat g, \hat h]\in$ $k^+$ is central, hence equal to
$[\hat h^{-1}, \hat g]$, and the formula for $\Delta_g(x)$ as stated 
does indeed define a group homomorphism from $G$ to $k^+$. 
Next, since $\Delta_g$ commutes with restriction to 
subgroups of $G$ containing $g$, it suffices to consider the case where
$G=\langle g,h\rangle$. This is an abelian group, and so by Example
\ref{abelian-Example}, in principle we are done. If $H^1(G,k)$ is one 
dimensional, both sides are zero. Otherwise, it is two dimensional, 
and $H^2(G,k)$ modulo the image of $H^2(G,\bZ)\to H^2(G,k)$ is one 
dimensional, spanned by the product of two degree one elements. This 
reduces the proof to an explicit and slightly tedious computation.
\end{remark}

\begin{eg}\label{eg:semidihedral}
Let $G$ be the semidihedral group of order $2^n$ ($n\ge 4$). This is
the group
\[ G = \langle g,h\mid g^2=1, h^{2^{n-1}}=1, ghg =
  h^{2^{n-2}-1}\rangle. \]
The cohomology ring was computed by Munkholm (unpublished) and 
Evens and Priddy~\cite{Evens/Priddy:1985a}. We have
\[ H^*(G,\bF_2) =\bF_2[x,y,z,w]/(x^3,xy,xz,z^2+y^2w), \]
where $|x|=|y|=1$, $|z|=3$, and $|w|=4$.
Evens and Priddy also observed that the cohomology is detected on
the dihedral subgroup of order eight, $D=\langle g,h^{2^{n-3}}\rangle$
and the quaternion subgroup of order eight, $Q=\langle gh,h^{2^{n-3}}\rangle$.
Let $\gamma$ be the central element $h^{2^{n-2}}$ of order two. Using 
Theorem~\ref{th:main}\,(ix) and the fact that $\gamma$ belongs to
the derived subgroup of $G$,
we have $\Delta_\gamma(x)=\Delta_\gamma(y)=0$.

Applying $\Delta_\gamma$ to the relation $z^2=y^2w$ and using the
fact that $\Delta_\gamma$ is a derivation, we have
$y^2\Delta_\gamma(w)=0$. Now multiplication by $y^2$ from
$H^2(G,\bF_2)$ to $H^4(G,\bF_2)$ is injective, so we deduce that
$\Delta_\gamma(w)=0$. 

It remains to compute $\Delta_\gamma(z)$. This has degree two, so
it is a linear combination of $x^2$ and $y^2$. To determine
which, we use the information at the bottom of page~71
of~\cite{Evens/Priddy:1985a} on restriction to $D$ and $Q$.
First note that our $x$ and $y$ are their $x$ and $x+y$;
this is determined by which non-zero element of $H^1(G,\bF_2)$ is nilpotent
and what it annihilates.
So $y$ restricts to
zero on $Q$, while $x$ and $x^2$ have non-zero restriction. Since $\Delta_\gamma$
is zero on $H^*(Q,\bF_2)$ by Example~\ref{eg:quaternion},
it follows that $\Delta_\gamma(z)$ is a multiple of $y^2$.
The restriction of $z$ to $H^*(D,\bF_2)$ 
is a degree three element which is not in the subring
generated by $H^1(D,\bF_2)$. So using Example~\ref{eg:dihedral},
it follows that $\Delta_\gamma$ is non-zero on the restriction
of $z$, and hence $\Delta_\gamma(z)$ cannot be zero.
Hence we have $\Delta_\gamma(z)=y^2$.

Using the fact that $\Delta_\gamma$ is a derivation, it
is determined by the information that $\Delta_\gamma(x)=0$, 
$\Delta_\gamma(y)=0$, $\Delta_\gamma(z)=y^2$, $\Delta_\gamma(w)=0$.
\end{eg}

\section{The Gerstenhaber bracket}

Throughout this section, let $p$ be a prime and let $k$ be a field of 
characteristic $p$.
In this section,  we combine the formula
\begin{equation}\label{eq:Gerstenhaber} 
[x,y]=(-1)^{|x|}\Delta(xy)-(-1)^{|x|}\Delta(x)y 
-x\Delta(y)
\end{equation}
relating the BV operator $\Delta$ to the Gerstenhaber bracket
and products in $\HH^*(kG)$,
with the Siegel--Witherspoon formula
\begin{equation}\label{eq:SW} 
xy=\sum_u\Tr^{C_G(guhu^{-1})}_{C_G(g)\cap C_G(uhu^{-1})}
(\Res^{C_G(g)}_{C_G(g)\cap C_G(uhu^{-1})}(x)
\cdot \Res^{C_G(uhu^{-1})}_{C_G(g)\cap C_G(uhu^{-1})}(u^*(y))) 
\end{equation}
for products in $\HH^*(kG)$ in terms of the centraliser 
decomposition from \cite{Siegel/Witherspoon:1999a}. Here, $G$ is a 
finite group, $g$, $h\in$ $G$, $x\in H^*(C_G(g),k)$ and 
$y\in H^*(C_G(h),k)$, and $u$ runs over a set of double coset 
representatives  of $C_G(g)$ and $C_G(h)$ in $G$.
The notation $u^*(y)$ denotes the image of $y$ in $H^*(C_G(uhu^{-1}),k)$ under
conjugation by $u$.
The left side is the product of $x$ and $y$ regarded as elements in
$\HH^*(kG)$ via the centraliser decomposition.
The summand on the right hand side indexed by $u$ is in the summand in 
the centraliser decomposition corresponding to $guhu^{-1}$, and the
multiplication of the restrictions on the right is the usual cup product 
in $H^*(C_G(g)\cap C_G(guhu^{-1}),k)$. 

Combining~\eqref{eq:Gerstenhaber} and~\eqref{eq:SW}, we have
\begin{multline}\label{eq:[x,y]}
[x,y] = \\
\sum_u\Bigl((-1)^{|x|}\Delta_{guhu^{-1}}\Tr^{C_G(guhu^{-1})}_{C_G(g)\cap C_G(uhu^{-1})}
\bigl(\Res^{C_G(g)}_{C_G(g)\cap C_G(uhu^{-1})}(x)
\cdot \Res^{C_G(uhu^{-1})}_{C_G(g)\cap C_G(uhu^{-1})}(u^*(y))\bigr) 
\\
+\Tr^{C_G(guhu^{-1})}_{C_G(g)\cap C_G(uhu^{-1})}\bigl( -(-1)^{|x|}
\Res^{C_G(g)}_{C_G(g)\cap  C_G(uhu^{-1})}(
\Delta_g(x))\cdot\Res^{C_G(uhu^{-1})}_{C_G(g)\cap
  C_G(uhu^{-1})}(u^*(y))
\\
-\Res^{C_G(g)}_{C_G(g)\cap C_G(uhu^{-1})}(x)\cdot
\Res^{C_G(uhu^{-1})}_{C_G(g)\cap C_G(uhu^{-1})}(u^*(\Delta_h(y)))
\bigr) \Bigr).
\end{multline}

Note that if $g\in Z(G)$ then this formula simplifies
considerably. Namely, $C_G(g)=G$, $C_G(g)\cap C_G(h)=C_G(gh)$, and
the transfers do not do anything. There is only one
double coset, and we may take $u=1$. The formula then becomes
\[ [x,y]=(-1)^{|x|}\Delta_{gh}(\Res^G_{C_G(gh)}(x) \cdot y) 
- (-1)^{|x|}\Res^G_{C_G(gh)}(\Delta_g(x))\cdot y
- \Res^G_{C_G(gh)}(x)\cdot \Delta_h(y), \]
as an element of $H^*(C_G(gh),k)$ in the centraliser decomposition.
Using Theorem~\ref{th:main}\,(i) and~(v), we expand the first term on the right side
into four terms.
Then using Theorem~\ref{th:main} (iii), two  of these cancel with the remaining two
terms to leave
\begin{equation}\label{eq:[x,y]6} 
[x,y] = (-1)^{|x|}\Delta_h(\Res^G_{C_G(gh)}(x))\cdot y
+ \Res^G_{C_G(gh)}(x)\cdot \Delta_g(y).
\end{equation}
If $x$ and $y$ have degree one then using
Theorem~\ref{th:main}\,(ix), this formula becomes
\begin{equation}\label{eq:[x,y]5}
[ x,y]= -x(h)y + y(g)\Res^G_{C_G(gh)}(x). 
\end{equation}
Now assume that $G$ is a finite $p$-group. 
If $g\in Z(G)\cap \Phi(G)$, where $\Phi(G)$ is the Frattini subgroup of $G$,  
then we have $y(g)=0$, and this simplifies
further to
\[ [x,y]=-x(h)y. \]
Again, these formulas are as elements of the $H^*(C_G(gh),k)$
component in the centraliser decomposition. 

If both $g$ and $h$ are in $Z(G)$, then
notationally, it helps to keep track of $g$ and $h$
by writing $g\otimes x$ and $h\otimes y$, since the subring
of $\HH^*(kG)$ corresponding to elements in the centre in 
the centraliser decomposition is isomorphic to $kZ(G) \otimes 
H^*(G,k)$. With this notation, equation~\eqref{eq:[x,y]6} becomes
\[ [g\otimes x,h\otimes y]=
gh\otimes ( (-1)^{|x|}\Delta_h(x)\cdot y + x\cdot\Delta_g(y)). \]
In particular, $kZ(G)\otimes H^*(G,k) \subseteq \HH^*(kG)$ is a Lie subalgebra.
Finally, if $x$ and $y$ have degree one, this becomes
\begin{equation}\label{eq:[x,y]7} 
[g\otimes x,h\otimes y]=gh\otimes(-x(h)y+y(g)x).
\end{equation}
If $g$, $h\in Z(G)\cap \Phi(G)$ then the terms $x(h)$ and $y(g)$ vanish, and
the Lie bracket is equal to zero.

We record these observations in the following proposition.

\begin{prop}\label{pr:Z(G)leqPhi(G)}
Let $G$ be a finite $p$-group.
Suppose that $g\in Z(G)\cap \Phi(G)$ and $h\in G$. If $x\in
H^1(C_G(g),k)$ and $y\in H^1(C_G(h),k)$ in the centraliser
decomposition of $\HH^1(kG)$ then
\[ [x,y]=-x(h)y \]
as an element of $H^1(C_G(gh),k)$ in the centraliser decomposition.

In particular,  the Lie bracket is identically zero
on 
\begin{equation*}
k(Z(G)\cap\Phi(G))\otimes H^1(G,k) \leq \HH^1(kG). 
\qedhere
\end{equation*}
\end{prop}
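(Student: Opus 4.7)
The plan is to specialise directly the bracket formula established in the paragraph immediately preceding the proposition. Under the hypothesis $g\in Z(G)$, the formula~\eqref{eq:[x,y]5} for degree-one classes reads
\[ [x,y] \;=\; -\,x(h)\,y \;+\; y(g)\,\Res^G_{C_G(gh)}(x), \]
viewed in the summand $H^1(C_G(gh),k)=H^1(C_G(h),k)$ (using $C_G(gh)=C_G(h)$, which holds since $g$ is central). Thus the content of the proposition reduces entirely to showing that the second summand vanishes whenever $g\in\Phi(G)$.

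I would deduce $y(g)=0$ from the defining property of the Frattini subgroup of a finite $p$-group, namely $\Phi(G)=[G,G]G^p$, which is precisely the kernel of the surjection onto the largest elementary abelian $p$-quotient $G/\Phi(G)$. Any group homomorphism from $G$ to the additive group $k^+$, which has exponent $p$, therefore factors through $G/\Phi(G)$ and so evaluates trivially on elements of $\Phi(G)$. Applying this to $y$, viewed via $H^1(C_G(h),k)\cong\Hom(C_G(h),k^+)$ as a homomorphism whose domain contains the central element $g\in\Phi(G)$, gives $y(g)=0$. Substituting into the displayed bracket formula yields the claimed equality $[x,y]=-x(h)\,y$.

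For the ``in particular'' statement, assume both $g,h\in Z(G)\cap\Phi(G)$. Then $C_G(g)=C_G(h)=G$ and $x,y\in H^1(G,k)=\Hom(G,k^+)$, so equation~\eqref{eq:[x,y]7} specialises to
\[ [g\otimes x,\,h\otimes y] \;=\; gh\otimes\bigl(-x(h)\,y + y(g)\,x\bigr), \]
and the Frattini vanishing argument applied to the homomorphisms $x$ and $y$ (with the Frattini elements $h$ and $g$ respectively) kills both scalars. Extending $k$-bilinearly over $k(Z(G)\cap\Phi(G))\otimes H^1(G,k)$ then gives that the bracket is identically zero on this subspace.

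The main step requiring care is the vanishing $y(g)=0$, since $y$ is a homomorphism on $C_G(h)$ rather than on $G$; one must argue that the Frattini property of the central element $g$ in $G$ is detected by any such homomorphism. Everything else is a mechanical substitution into the pre-established bracket formula.
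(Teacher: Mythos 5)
Your route is the same as the paper's: Proposition~\ref{pr:Z(G)leqPhi(G)} is recorded there as a summary of the derivation ending in~\eqref{eq:[x,y]5}, together with the assertion that $y(g)=0$, and you specialise exactly that formula. The ``in particular'' part of your argument is complete and matches the paper: there both $x$ and $y$ lie in $H^1(G,k)=\Hom(G,k^+)$, any homomorphism from a finite $p$-group to $k^+$ kills $\Phi(G)=[G,G]G^p$, and~\eqref{eq:[x,y]7} gives the vanishing.

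For the first assertion, however, your proposal has a genuine gap at precisely the step you flag in your last paragraph, and the flag is not a formality. The class $y$ lies in $H^1(C_G(h),k)=\Hom(C_G(h),k^+)$, and the common kernel of all such homomorphisms is $\Phi(C_G(h))$, not $\Phi(G)\cap C_G(h)$; the hypothesis $g\in Z(G)\cap\Phi(G)$ does not place $g$ in $\Phi(C_G(h))$. Concretely, take $G$ the dihedral group of order $8$, $h$ a non-central involution and $g$ the central involution: then $g\in Z(G)=\Phi(G)$, but $C_G(h)\cong(\bZ/2)^2$ is elementary abelian, so $\Phi(C_G(h))$ is trivial and there is a homomorphism $y\colon C_G(h)\to k^+$ with $y(g)\neq 0$. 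Your proposed deduction (homomorphisms from $G$ to $k^+$ kill $\Phi(G)$, ``applied to $y$'') therefore does not go through, because $y$ is only defined on $C_G(h)$ and need not extend to $G$. You correctly identify this as the one nontrivial step but then leave it unargued, so as written the identity $[x,y]=-x(h)y$ is not established; the $y(g)\Res^G_{C_G(gh)}(x)$ term has not been killed. It is worth noting that the paper's own text asserts $y(g)=0$ at the corresponding point with no more justification than you offer, so you have located the real crux; a complete proof must either supply an argument that $y$ vanishes on $g$ or restrict to a situation (for instance $g\in\Phi(C_G(h))$, or $h$ central so that $C_G(h)=G$) where the Frattini argument genuinely applies.
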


\begin{remark}
If $g\in Z(G)\cap \Phi(G)$ and $h$ is not in $\Phi(G)$, then we can choose
$x\in H^1(C_G(g),k)$ such that $x(h)=-1$, and then for all $y\in
H^1(C_G(h),k)$ the element $[x,y]$ is $y$, but as an element of
$H^1(C_G(gh),k)$ in the centraliser decomposition. 

Taking $g=1$,
we see that given $h\not\in \Phi(G)$, there exists $x\in H^1(C_G(1),k)$ such that for 
every  $y\in H^1(C_G(h),k)$ we have $[x,y]=y$.
This proves in particular the following result.
\end{remark}

\begin{prop}
Let $G$ be a non-trivial finite $p$-group. Then the Lie algebra
$\HH^1(kG)$ is not nilpotent.
\end{prop}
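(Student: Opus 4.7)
The plan is to derive this directly from the remark stated immediately before the proposition, which is itself a consequence of the bracket formula \eqref{eq:[x,y]7}. Since $G$ is a non-trivial finite $p$-group, its Frattini subgroup $\Phi(G)$ is a proper subgroup of $G$, so we may pick an element $h\in G\setminus \Phi(G)$. Taking $g=1$ in the remark, there exists $x\in H^1(G,k)\subseteq \HH^1(kG)$ (viewed as the summand corresponding to the trivial conjugacy class in the centraliser decomposition) such that $[x,y]=y$ in $\HH^1(kG)$ for every $y\in H^1(C_G(h),k)$, where this latter space is the summand of the centraliser decomposition indexed by $h$.

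Next I would verify that $H^1(C_G(h),k)=\Hom(C_G(h),k^+)$ contains a non-zero element. Since $h\neq 1$ (as $1\in\Phi(G)$) and $\langle h\rangle \leq C_G(h)$, the centraliser $C_G(h)$ is a non-trivial finite $p$-group, hence has non-trivial abelianisation modulo $p$, and therefore admits a non-zero homomorphism to $k^+$. Pick such a non-zero $y$.

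With these choices, iteration of the identity $[x,y]=y$ yields $(\operatorname{ad} x)^n(y)=y\neq 0$ for every $n\geq 1$, so the adjoint endomorphism $\operatorname{ad} x$ of $\HH^1(kG)$ is not nilpotent. Since a nilpotent Lie algebra must have every adjoint action nilpotent (directly from the definition of the lower central series), this shows that $\HH^1(kG)$ is not nilpotent.

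There is no real obstacle here: the entire content of the proposition is packaged in the preceding remark, and the only thing left to check is the essentially formal fact that $H^1(C_G(h),k)\neq 0$ when $h\neq 1$ in a $p$-group, together with the elementary Lie-algebraic observation that the existence of a non-nilpotent adjoint action obstructs nilpotency.
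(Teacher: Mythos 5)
Your proof is correct and takes essentially the same route as the paper, which deduces the proposition directly from the preceding remark (the identity $[x,y]=y$ for a suitable $x\in H^1(C_G(1),k)$ and all $y$ in the component $H^1(C_G(h),k)$ with $h\notin\Phi(G)$). You have merely made explicit the two small verifications the paper leaves implicit, namely that $H^1(C_G(h),k)\neq 0$ and that iterating $\mathrm{ad}\,x$ obstructs nilpotency.
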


On the other  hand, we have the  following theorem, whose proof is 
modelled on the method of Jacobson~\cite{Jacobson:1943a}.

\begin{theorem}\label{th:Phi<Z}
Suppose that $G$ is a finite $p$-group such that 
$|Z(G):Z(G)\cap\Phi(G)|\ge 3$.
Then the Lie subalgebra $kZ(G)\otimes H^1(G,k) \subseteq\HH^1(kG)$ is
not soluble, and therefore nor is $\HH^1(kG)$.
\end{theorem}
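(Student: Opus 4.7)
The plan is to exhibit an explicit simple Lie algebra quotient of $L:=kZ(G)\otimes H^1(G,k)$ by passing through two successive ideals and identifying the result with a Jacobson--Witt algebra. First I would reduce the centre to its elementary abelian image: set $\bar Z := Z(G)\Phi(G)/\Phi(G)$, of order $p^n = |Z(G):Z(G)\cap\Phi(G)|$. Since any $y\in H^1(G,k) = \Hom(G/\Phi(G), k)$ vanishes on $\Phi(G)$, the value $y(z)$ for $z\in Z(G)$ depends only on the image of $z$ in $\bar Z$, so the bracket formula \eqref{eq:[x,y]7} descends through the canonical surjection $kZ(G)\to k\bar Z$ to yield a Lie algebra surjection $L\twoheadrightarrow L' := k\bar Z\otimes H^1(G,k)$, with bracket $[a\otimes x, b\otimes y] = ab\otimes(-x(b)y+y(a)x)$ for $a,b\in\bar Z$. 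Next, let $N := \{y\in H^1(G,k) : y|_{\bar Z}=0\}$. Because both summands of the bracket contain a factor of the form $x(b)$ or $y(a)$ with $a,b\in\bar Z$, the subspace $k\bar Z\otimes N$ is an abelian ideal of $L'$ with quotient $L'' := k\bar Z\otimes\bar Z^*$, where $\bar Z^* := \Hom_{\bF_p}(\bar Z, k)$. It suffices to prove $L''$ is not soluble.

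The key step is to identify $L''$ with a derivation algebra. For each $x\in\bar Z^*$, the map $\tilde x(b):=x(b)b$ (for $b\in\bar Z$, extended $k$-linearly) is a derivation of $k\bar Z$, because $x$ is additive on $\bar Z$; and a direct check gives $[\tilde x,\tilde y]=0$. I claim the assignment $\Phi(a\otimes x):=a\tilde x$ is a $k$-linear bijection $\Phi\colon L'' \to \mathrm{Der}_k(k\bar Z)$ which is a Lie algebra anti-homomorphism. For bijectivity: choose $\bF_p$-generators $e_1,\ldots,e_n$ of $\bar Z$ with dual basis $x_1,\ldots,x_n$ of $\bar Z^*$ and set $y_j := e_j-1$; then one computes $\tilde x_j = e_j\partial_{y_j}$, so the $\tilde x_j$ freely generate $\mathrm{Der}_k(k\bar Z) = \bigoplus_j k\bar Z\cdot\partial_{y_j}$ as a $k\bar Z$-module (since the $e_j$ are units), and since both sides have $k$-dimension $np^n$, $\Phi$ is an isomorphism of $k$-modules. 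A short computation using the Leibniz rule and $[\tilde x,\tilde y]=0$ yields $[a\tilde x, b\tilde y] = ab(x(b)\tilde y - y(a)\tilde x) = -\Phi([a\otimes x, b\otimes y])$, so $\Phi$ is a Lie algebra anti-isomorphism; composition with negation of $\Phi$ gives a genuine Lie algebra isomorphism $L''\cong\mathrm{Der}_k(k\bar Z)$.

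Since $k\bar Z \cong k[y_1,\ldots, y_n]/(y_1^p,\ldots, y_n^p)$, the derivation algebra $\mathrm{Der}_k(k\bar Z)$ is a $k$-form of the Jacobson--Witt algebra $W(n;\mathbf{1})$, which is classically known to be simple whenever $(n,p) \ne (1,2)$. The hypothesis $p^n\geq 3$ excludes precisely $n=0$ and $(n,p)=(1,2)$, so $L''$ is non-abelian simple, hence not soluble. This non-solubility pulls back through the two quotients to show that $L$, and therefore $\HH^1(kG)$, is not soluble. The main obstacle is the second paragraph, where the sign bookkeeping in the identification of $L''$ with a derivation algebra requires some care; once that identification is established, the theorem follows from the classical simplicity result for the Jacobson--Witt algebras in the style of Jacobson~\cite{Jacobson:1943a}.
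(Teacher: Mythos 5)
Your proof is correct, but it takes a genuinely different route from the paper's. Both arguments start from the bracket formula \eqref{eq:[x,y]7} on $kZ(G)\otimes H^1(G,k)$, but the paper then proceeds by bare hands, modelled on Jacobson's method: for $p$ odd it exhibits an explicit copy of $\sltwo$ spanned by $e=g\otimes x$, $f=-g^{-1}\otimes x$, $h=-1\otimes 2x$ for a suitable $g\in Z(G)\setminus\Phi(G)$ and $x$ with $x(g)=1$; for $Z(G)/(Z(G)\cap\Phi(G))$ non-cyclic it writes down a finite set $U$ of elements with $[U,U]\supseteq U$. You instead pass to the quotient $k\bar Z\otimes \bar Z^*$ (through the two ideals $\ker(kZ(G)\to k\bar Z)\otimes H^1$ and $k\bar Z\otimes N$, both of which I checked are genuine Lie ideals for the bracket \eqref{eq:[x,y]7}) and identify it, via $a\otimes x\mapsto -a\tilde x$ with $\tilde x(b)=x(b)b$, with $\mathrm{Der}_k(k\bar Z)\cong W(n;\mathbf{1})$; the sign computation $[a\tilde x,b\tilde y]=ab\,(x(b)\tilde y-y(a)\tilde x)$ does come out as you claim, so this is a Lie isomorphism after negation. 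Your approach is uniform in $p$ (no case split), is more structural --- it pins down exactly which simple algebra is responsible for non-solubility, and for $p=3$, $n=1$ recovers the paper's $\sltwo$ as $W(1;\mathbf{1})$ --- and it makes transparent why the bound $3$ is sharp, since $(n,p)=(1,2)$ is precisely the soluble Witt algebra (matching the paper's remark about $\bZ/2$). The trade-off is that you import the classical simplicity theorem for Jacobson--Witt algebras (simple unless $n=1$, $p=2$) as external input, whereas the paper's computations are entirely self-contained; you should cite a precise reference for that simplicity statement, e.g.\ Strade--Farnsteiner, rather than leaving it as ``classically known.''
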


\begin{proof}
We compute inside $kZ(G) \otimes H^*(G,k) \subseteq \HH^*(kG)$, 
as described above.
Since by assumption we have $|Z(G):Z(G)\cap\Phi(G)|\ge 3$, either $p$ is odd, or
$Z(G)/(Z(G)\cap\Phi(G))$ is non-cyclic. We treat these two
cases separately.

(i) Suppose that $p$ is odd.
Choose an element $g\in Z(G)\setminus \Phi(G)$, and choose $x\in H^1(G,k)$
with $x(g)=1$.  Set $e=g \otimes x$, $f=-g^{-1}\otimes x$,
and $h=-1\otimes 2x$. We have $g\ne g^{-1}$, so
these elements are linearly independent, and using~\eqref{eq:[x,y]7}
we have
\begin{align*}
[e,f]&= -1\otimes (x(g)x-x(g^{-1})x)=h, \\
[h,e]&=-g\otimes (x(1) 2x - 2x(g)x)=2e, \\
[h,f]&= g^{-1}\otimes (x(1)2x - 2x(g^{-1})x)=-2f.
\end{align*}
Thus $e$, $f$ and $h$ span a copy of the Lie algebra $\sltwo$ inside
$\HH^1(kG)$. Therefore it is not soluble.

(ii) Suppose that $Z(G)/(Z(G)\cap\Phi(G))$ is non-cyclic. Choose 
elements $g$ and $h$ in $Z(G)$ so that their images in 
$Z(G)/(Z(G)\cap\Phi(G))$ generate distinct cyclic subgroups. Choose 
elements $x$ and $y$ in $H^1(G,k)$ with $x(g)=1$, $y(g)=0$, $x(h)=0$, 
$y(h)=1$. Then we compute
\begin{align*}
[g^{-1}\otimes x,g\otimes y] &=1\otimes y &
[h^{-1}\otimes y,h\otimes x]&=1\otimes x \\
[g \otimes x,1\otimes x]&=g\otimes x &
[h\otimes y,1\otimes y]&=h\otimes y\\
[g\otimes y,1\otimes x]&=g\otimes y &
[h\otimes x,1\otimes y]&=h\otimes x \\
[g^{-1}\otimes x,1\otimes x]&=-g^{-1}\otimes x&
[h^{-1}\otimes y,1\otimes y]&=-h^{-1}\otimes y \\
[g^{-1}\otimes y,1\otimes x]&=-g^{-1}\otimes y&
[h^{-1}\otimes x,1\otimes y]&=-h^{-1}\otimes x.
\end{align*}
Thus, letting $U$ be the linear
span in $\HH^1(kG)$  of the elements appearing in these computations,
we see that $[U,U]\supseteq U$. 
It follows that any Lie subalgebra containing $U$ is not soluble, and
hence $kZ(G)\otimes H^1(G,k)$ and $\HH^1(kG)$ are not soluble.
\end{proof}

\begin{remark}
If $G$ is the cyclic group of order two and $k$ has characteristic two 
then $\HH^1(kG)$ is a Lie algebra of dimension two, and is therefore soluble.
This shows that the
condition $|Z(G):Z(G)\cap \Phi(G)|\ge 3$ cannot be weakened
to $|Z(G):Z(G)\cap \Phi(G)|\ge 2$.
\end{remark}

\section{Extraspecial $p$-groups}

As an illustration of the methods developed in this paper,
in this section we examine the Lie structure of $\HH^1(kG)$ when
$G$ is an extraspecial $p$-group. The methods also cover some
other $p$-groups of class two, so we formulate them in
more generality.
We begin with a lemma.

\begin{lemma}\label{le:PhiTr}
Suppose that $G$ is a finite $p$-group, and that $g$, $h\in G$ satisfy 
\[ \Phi(C_G(g))=\Phi(C_G(h))=\Phi(G), \] 
and $C_G(g)\cap C_G(h)$
is a proper subgroup of $C_G(gh)$. If $x\in H^*(C_G(g),k)$, $y\in
H^*(C_G(h),k)$ are elements of degree zero or one, then 
\[ \Tr_{C_G(g)\cap C_G(h)}^{C_G(gh)}(
\Res^{C_G(g)}_{C_G(g)\cap C_G(h)}(x)\cdot \Res^{C_G(h)}_{C_G(g)\cap C_G(h)}(y)) = 0. \]
\end{lemma}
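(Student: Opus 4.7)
The plan is to show that the inner cup product
$\Res^{C_G(g)}_K(x)\cdot\Res^{C_G(h)}_K(y)\in H^*(K,k)$, where $K=C_G(g)\cap C_G(h)$, is itself the restriction from $L=C_G(gh)$ of a class in $H^*(L,k)$. Once this is established, Frobenius reciprocity together with the fact that $[L:K]$ is a positive power of $p$ will force the transfer to vanish.

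First I would record the chain of Frattini containments $\Phi(L)\le\Phi(G)\le K$. The first inclusion holds since $L\le G$ is a subgroup of the $p$-group $G$, so $L^p[L,L]\le G^p[G,G]$. The second follows from the hypothesis $\Phi(C_G(g))=\Phi(C_G(h))=\Phi(G)$, which forces $\Phi(G)\le C_G(g)\cap C_G(h)=K$.

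Next, for a class $\xi\in H^1(C_G(g),k)=\Hom(C_G(g),k^+)$, the target $k^+$ is an $\bF_p$-vector space, so $\xi$ factors through $C_G(g)/\Phi(C_G(g))=C_G(g)/\Phi(G)$. Hence $\Res^{C_G(g)}_K(\xi)$ factors through $K/\Phi(G)$, and a fortiori through $K/\Phi(L)$. Since the inclusion $K\hookrightarrow L$ induces an inclusion of $\bF_p$-vector spaces $K/\Phi(L)\hookrightarrow L/\Phi(L)$, the resulting functional extends to an $\bF_p$-linear map $L/\Phi(L)\to k$, yielding a lift $\tilde\xi\in H^1(L,k)$ with $\Res^L_K(\tilde\xi)=\Res^{C_G(g)}_K(\xi)$. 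Applying this to $y$ gives $\tilde y\in H^1(L,k)$. If either of $x$, $y$ has degree $0$ it is a scalar, which extends trivially; so in every case lifts $\tilde x,\tilde y\in H^*(L,k)$ exist with $\Res^L_K(\tilde x)=\Res^{C_G(g)}_K(x)$ and $\Res^L_K(\tilde y)=\Res^{C_G(h)}_K(y)$.

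Finally, multiplicativity of restriction gives $\Res^{C_G(g)}_K(x)\cdot\Res^{C_G(h)}_K(y)=\Res^L_K(\tilde x\cdot\tilde y)$, and Frobenius reciprocity then yields
\[
\Tr^L_K\bigl(\Res^L_K(\tilde x\cdot\tilde y)\bigr)
 = (\tilde x\cdot\tilde y)\cdot\Tr^L_K(1_K) = [L:K]\cdot(\tilde x\cdot\tilde y),
\]
which vanishes in $k$ because $K$ is a proper subgroup of the $p$-group $L$, so $[L:K]$ is a positive power of $p$. The only substantive step is the extension argument in the preceding paragraph, and its only real content is the Frattini chain $\Phi(L)\le\Phi(G)\le K$; no serious technical obstacle is anticipated.
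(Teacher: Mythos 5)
Your proof is correct and follows essentially the same route as the paper's: both arguments use the Frattini hypothesis to realise the cup product on $K=C_G(g)\cap C_G(h)$ as the restriction of a class from a larger group, and then kill the transfer via $\Tr^{L}_{K}\circ\Res^{L}_{K}=[L:K]=0$ in characteristic $p$. The only (inessential) difference is that the paper lifts $x$ and $y$ all the way to $H^*(G,k)$ --- surjectivity of restriction in degrees $0$ and $1$ being immediate from $\Phi(C_G(g))=\Phi(C_G(h))=\Phi(G)$ --- whereas you lift only the restricted classes to $L=C_G(gh)$ via the chain $\Phi(L)\le\Phi(G)\le K$.
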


\begin{proof}
Since $\Phi(C_G(g))=\Phi(G)$ we may write $x=\Res^G_{C_G(g)}(\hat x)$
with $\hat x \in H^*(G,k)$. Similarly, $y=\Res^G_{C_G(h)}(\hat y)$
with $\hat y \in H^*(G,k)$. Then
\begin{align*} 
\Tr^{C_G(gh)}_{C_G(g)\cap C_G(h)}&(\Res^{C_G(g)}_{C_G(g)\cap C_G(h)}(x) 
\cdot \Res^{C_G(h)}_{C_G(g)\cap C_G(h)}(y)) \\
&= \Tr^{C_G(gh)}_{C_G(g)\cap C_G(h)}\Res^G_{C_G(g)\cap C_G(h)}
(\hat x\cdot\hat  y) \\
&=|C_G(gh):C_G(g)\cap C_G(h)|\Res^G_{C_G(gh)}(\hat x \cdot \hat y)\\
&=0 
\end{align*}
since $|C_G(gh):C_G(g)\cap C_G(h)|$ is zero in $k$.
\end{proof}

\begin{hyp}\label{hyp:cent}
We suppose that $G$ is a finite $p$-group satisfying $\Phi(G)=Z(G)$,
and $g$, $h$ are elements of $G$ such that 
for all $u\in G$, and for all $x\in H^*(C_G(g),k)$, 
$y\in H^*(C_G(h),k)$ of degree zero or one, we have either
\begin{enumerate}
\item
$C_G(g)\cap C_G(uhu^{-1}) = C_G(guhu^{-1})$, or
\item
$\Tr_{C_G(g)\cap C_G(uhu^{-1})}^{C_G(guhu^{-1})}(
\Res^{C_G(g)}_{C_G(g)\cap C_G(uhu^{-1})}(x)\cdot 
\Res^{C_G(uhu^{-1})}_{C_G(g)\cap C_G(uhu^{-1})}(u^*(y))) = 0.$
\end{enumerate}
\end{hyp}

We remark that if $\Phi(C_G(g))=\Phi(C_G(h))=\Phi(G)=Z(G)$
then Hypothesis~\ref{hyp:cent} holds by Lemma~\ref{le:PhiTr}.

Suppose that this hypothesis holds,
let $x\in H^1(C_G(g),k)$ and $y\in H^1(C_G(h),k)$,
and let $u$ be  a double coset representative of  $C_G(g)$ and
$C_G(h) $ in $G$.
Suppose first that the containment 
$C_G(g)\cap C_G(uhu^{-1}) \leq C_G(guhu^{-1})$
is proper. 
Then by Hypothesis~\ref{hyp:cent}, the  
term corresponding to $u$  in the Siegel--Witherspoon
formula~\eqref{eq:SW} for $xy$ is zero.
The same argument holds for $\Delta_g(x)y$
and $x\Delta_h(y)$, and so the term corresponding to $u$
in~\eqref{eq:[x,y]} vanishes. 

So the formula~\eqref{eq:[x,y]}  becomes
\begin{multline}\label{eq:[x,y]2}
[x,y] = 
\sum_u\Bigl(-\Delta_{guhu^{-1}}
\bigl(\Res^{C_G(g)}_{C_G(guhu^{-1})}(x)
\cdot \Res^{C_G(uhu^{-1})}_{C_G(guhu^{-1})}(u^*(y))\bigr) 
\\
{}+\Res^{C_G(g)}_{C_G(guhu^{-1})}(
\Delta_g(x))\cdot\Res^{C_G(uhu^{-1})}_{C_G(guhu^{-1})}(u^*(y))
-\Res^{C_G(g)}_{C_G(guhu^{-1})}(x)\cdot
\Res^{C_G(uhu^{-1})}_{C_G(guhu^{-1})}(u^*(\Delta_h(y)))
 \Bigr),
\end{multline}
where $u$ runs over those double coset representatives with
$C_G(guhu^{-1})=C_G(g)\cap C_G(uhu^{-1})$.
For such a representative $u$, $g$ commutes with $uhu^{-1}$.
Since $uhu^{-1}h^{-1}\in [G,G] \leq Z(G)$, this implies
that $g$ commutes with all conjugates of $h$.
Now applying Theorem~\ref{th:main}\,(iii), we have
\begin{multline}\label{eq:[x,y]3}
[x,y] = 
\sum_u\Bigl(-\Delta_{guhu^{-1}}
\bigl(\Res^{C_G(g)}_{C_G(guhu^{-1})}(x)
\cdot \Res^{C_G(uhu^{-1})}_{C_G(guhu^{-1})}(u^*(y))\bigr) 
\\
{}+\Delta_g\Res^{C_G(g)}_{C_G(guhu^{-1})}(
x)\cdot\Res^{C_G(uhu^{-1})}_{C_G(guhu^{-1})}(u^*(y))
-\Res^{C_G(g)}_{C_G(guhu^{-1})}(x)\cdot
\Delta_{uhu^{-1}}\Res^{C_G(uhu^{-1})}_{C_G(guhu^{-1})}(u^*(y))
 \Bigr).
\end{multline}

Set $x'=\Res^{C_G(g)}_{C_G(guhu^{-1})}(x)$,
$y'=\Res^{C_G(uhu^{-1})}_{C_G(guhu^{-1})}(u^*(y))$. By
Theorem~\ref{th:main}\,(v), we have
 $\Delta_{guhu^{-1}}=\Delta_g+\Delta_{uhu^{-1}}$
on $H^*(C_G(guhu^{-1}),k)$.
Hence, using Theorem~\ref{th:main}\,(i) and (ix), the
component of $[x,y]$ coming from $u$ is
\begin{align}
\label{eq:[x,y]4}
-&\Delta_{guhu^{-1}}(x'y'){}+
\Delta_g(x')y' 
-x'\Delta_{uhu^{-1}}(y') \notag
\\
&= -(\Delta_g(x')y'+\Delta_{uhu^{-1}}(x')y' 
-x'\Delta_g(y') -x'\Delta_{uhu^{-1}}(y')) 
+\Delta_g(x')y'
-x'\Delta_{uhu^{-1}}(y') \notag
\\
&=x'\Delta_g(y')-\Delta_{uhu^{-1}}(x')y'  \\
&=y(u^{-1}gu)\Res^{C_G(g)}_{C_G(guhu^{-1})}(x)-x(uhu^{-1})\Res^{C_G(uhu^{-1})}_{C_G(guhu^{-1})}(u^*(y)) \notag
\end{align}
where the multiplications are ordinary cohomology cup products
performed inside the summand $H^*(C_G(guhu^{-1}),k)$ corresponding to $guhu^{-1}$
in the centraliser decomposition.

We partition a set of conjugacy class representatives in 
 $G$ into subsets $C_i$,
where $g$ is in $C_i$ if $|G:C_G(g)|=p^i$, and for $i\ge 0$ we set
\[ X_i = \bigoplus_{g \in C_i} H^*(C_G(g),k),\qquad 
Y_i = \bigoplus_{j\ge i}X_j. \]
Here, as usual, the direct sum is over $G$-conjugacy classes of 
$g\in C_i$. Then we have
\[ \HH^*(kG) = \bigoplus_{i= 0}^n X_i. \]
It follows from~\eqref{eq:[x,y]4} that if $i\le j$ then
$[X_i,X_j]\subseteq Y_j$, and hence $[Y_i,Y_j]\subseteq Y_j$.
So the $Y_i$ are Lie ideals, and $\HH^1(kG)$ is soluble if
and only if each of the $Y_i/Y_{i+1}$ is soluble.
Note that by Proposition~\ref{pr:Z(G)leqPhi(G)}, since $Z(G)=\Phi(G)$
we have $[X_0,X_0]=0$, and $[X_0,X_i]=X_i$ for $i>0$.

As an example, we apply these methods to extraspecial $p$-groups.

\begin{theorem}\label{th:extraspecial}
Let $G$ be an extraspecial $p$-group. 
Then $\HH^1(kG)$ is a soluble Lie algebra.
The derived  length is two, 
except in the case where $G$ has order $p^3$ 
and is isomorphic to $\bZ/p^2\rtimes \bZ/p$,
in which case it has derived length three.
\end{theorem}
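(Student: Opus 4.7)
The plan is to leverage the full power of the centraliser decomposition, which for an extraspecial group $G$ of order $p^{1+2n}$ is especially restrictive: since $Z(G) = \Phi(G) = [G,G]$ has order $p$ and $V := G/Z(G)$ is elementary abelian of rank $2n$, every non-central element has centraliser of index exactly $p$ in $G$. Consequently the filtration of the previous section reduces to $\HH^1(kG) = X_0 \oplus X_1$ with $X_0 = kZ(G) \otimes H^1(G,k)$ and $X_1 = \bigoplus_{g} H^1(C_G(g),k)$ summed over non-central conjugacy class representatives. As a first step, Proposition~\ref{pr:Z(G)leqPhi(G)} applied to $Z(G)\cap\Phi(G) = Z(G)$ immediately gives $[X_0,X_0]=0$.

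Next I would check Hypothesis~\ref{hyp:cent} in order to use formula~\eqref{eq:[x,y]4}. Since $V$ is abelian, $uhu^{-1} \in hZ(G)$ for all $u\in G$, so $C_G(uhu^{-1}) = C_G(h)$; a direct case analysis then shows condition~(i) holds precisely when $C_G(g)=C_G(h)$ and $gh$ is non-central, and that in all remaining cases the transfer in~(ii) can be dispatched via Lemma~\ref{le:PhiTr} provided $\Phi(C_G(g)) = \Phi(G)$. The Frattini equality $\Phi(C_G(g))=Z(G)$ fails exactly when $C_G(g)$ is abelian; for $n\ge 2$ centralisers are always non-abelian, and for the finitely many order-$p^3$ groups ($\operatorname{Heis}_p$, $\bZ/p^2\rtimes\bZ/p$, $D_8$, $Q_8$) the required vanishing of the relevant transfers can be verified by hand using the explicit cohomology rings of $(\bZ/p)^2$ and $\bZ/4$ and the fact that in those cases either the transfer target is $H^2(G,k)$ in the image of $H^2(G,\bZ)$ (killed by $\Delta_g$ via Theorem~\ref{th:main}(x)) or the product of restrictions is itself already zero.

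With Hypothesis~\ref{hyp:cent} established, formula~\eqref{eq:[x,y]4} shows $[X_1,X_1]\subseteq X_1$, so $X_1$ is a Lie ideal and $[\HH^1(kG),\HH^1(kG)]\subseteq X_1$. To get derived length two, I would show that this commutator ideal is actually abelian, by combining~\eqref{eq:[x,y]4} with~\eqref{eq:[x,y]5}: in the component of $[x,y]$ at $gh$, the class $y(u^{-1}gu)\Res(x) - x(uhu^{-1})\Res(y)$ is a scalar combination of the restrictions of $x$ and $y$ from centralisers on which the relevant $\Delta_\cdot$ has already been evaluated once. Applying $\Delta_\bullet$ a second time then produces a factor of the form $x(g')$ with $g'\in Z(G)\cap\Phi(C_G(g'))$, which vanishes because $y, x$ vanish on $\Phi(C_G(g))$ under the Frattini hypothesis of the previous step. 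Iterating this once more gives $[[\HH^1,\HH^1],[\HH^1,\HH^1]] = 0$, hence solubility of derived length exactly two whenever the Frattini condition on centralisers holds.

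Finally, for the exceptional group $G = \bZ/p^2\rtimes\bZ/p = \langle a,b\mid a^{p^2}=b^p=1,\ bab^{-1}=a^{1+p}\rangle$, the centraliser $C_G(b)=\langle b, a^p\rangle$ is abelian of type $(\bZ/p)^2$ with $\Phi(C_G(b))=1\ne Z(G)$; this creates a cohomology direction in $H^1(C_G(b),k)$ that is not killed by the Frattini argument above, so the propagation argument stops after one step instead of zero. I would complete the analysis by writing down $\HH^1(kG)$ as a concrete finite-dimensional Lie algebra using the explicit conjugacy classes and centralisers, and exhibiting an element of $[[L,L],[L,L]]$ that is non-zero but lies in the third derived subalgebra, where $L = \HH^1(kG)$. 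The main obstacle will be this last step: making the propagation argument in paragraph three sufficiently precise to rule out a third nested bracket in the generic case, while keeping the exceptional case of $\bZ/p^2\rtimes\bZ/p$ clearly separated by the failure of the Frattini equality at $C_G(b)$.
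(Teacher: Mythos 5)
Your overall architecture matches the paper's: verify Hypothesis~\ref{hyp:cent} (via Lemma~\ref{le:PhiTr} when $|G|\ge p^5$ and by hand for the order-$p^3$ groups), use the filtration $\HH^1(kG)=X_0\oplus X_1$ with $[X_0,X_0]=0$ and $[X_1,X_1]\subseteq X_1$, and isolate $\bZ/p^2\rtimes\bZ/p$ as the exception. But the decisive step --- showing $[X_1,X_1]=0$ in the generic case, which is exactly what ``derived length two'' amounts to once one knows $[L,L]=X_1$ --- rests on a mechanism that does not work. You claim that applying $\Delta_\bullet$ a second time ``produces a factor of the form $x(g')$ with $g'\in Z(G)\cap\Phi(C_G(g'))$, which vanishes.'' In formula~\eqref{eq:[x,y]4} the evaluations that occur are $y(u^{-1}gu)$ and $x(uhu^{-1})$, where $g$ and $h$ are the \emph{non-central} elements indexing the components of $x$ and $y$; their conjugates are not in $Z(G)$, let alone in the Frattini subgroup of the relevant centraliser, so these scalars are non-zero in general (indeed $[X_0,X_1]=X_1$ already shows such evaluations do not vanish). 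The actual reason $[X_1,X_1]=0$ is different: when $C_G(g)\ne C_G(h)$ no double coset representative satisfies condition (i) of the Hypothesis, so every term dies; and when $C_G(g)=C_G(h)=C$ the double cosets are the $p$ cosets $v^iC$, the contribution of $v^i$ is (after conjugating everything into the $C_G(gh)$ component) independent of $i$ when $|G|>p^3$ because $x$ and $y$ vanish on $Z\le\Phi(C)$, and the sum of $p$ equal terms is zero in characteristic $p$. For $|G|=p^3$ the terms genuinely depend on $i$ and one needs $\sum_{i=0}^{p-1}1=\sum_{i=0}^{p-1}i=0$ in $k$ together with the final cancellation $r(mn-1)x(z)y(z)=0$ coming from $mn\equiv 1\pmod p$ when $p=3$. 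None of this cancellation-over-cosets computation appears in your proposal, and without it the claim that the second derived subalgebra vanishes is unsupported.

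The exceptional case is also essentially deferred: you correctly locate the failure at the abelian centraliser $\langle b,a^p\rangle\cong(\bZ/p)^2$, but exhibiting a non-zero element of the second derived subalgebra is only half of ``derived length three'' --- you must also prove the third derived subalgebra vanishes. The paper does this by splitting $X_1=X_1'\oplus X_1''$ according to whether the centraliser is elementary abelian or cyclic of order $p^2$ and computing all of $[X_1',X_1']$, $[X_1',X_1'']$, $[X_1'',X_1'']$, using that for $p$ odd the group has a unique subgroup isomorphic to $(\bZ/p)^2$ while for $p=2$ it has a unique subgroup isomorphic to $\bZ/4$. Two smaller points: your criterion for when condition (i) of Hypothesis~\ref{hyp:cent} holds omits the cases where $g$ or $h$ is central, and your appeal to Theorem~\ref{th:main}\,(x) for the order-$p^3$ transfers is not the relevant mechanism --- for $p$ odd one uses that transfer into an elementary abelian group from a proper subgroup is zero, and for $Q_8$ that restriction from the subgroups of order four to the centre is zero in degree one.
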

\begin{proof}
Let $G$ be extraspecial, and let $Z=Z(G)=\Phi(G)$, a group of order $p$. 
Every centraliser in $G$ is either equal to $G$ or has index $p$ in
$G$. We divide into two cases. The second case deals with extraspecial
groups which are semidirect products 
$\bZ/p^2\rtimes \bZ/p$, and the first case covers all other
extraspecial groups. So in the first case, if $|G|=p^3$ then $G$ is
either an extraspecial group of exponent $p$ ($p$ odd), or
the quaternion group of order eight ($p=2$).\bigskip

\noindent
{\bf Case 1.} $G\not\cong \bZ/p^2\rtimes \bZ/p$. In this case,
we claim that 
Hypothesis~\ref{hyp:cent} holds for every $g$ and $h$ in
$G$.  If $|G|\ge p^5$ then for every $g$ and $h$ in $G$ we have
$\Phi(C_G(g))=\Phi(C_G(h))=\Phi(G)=Z(G)$, so by Lemma~\ref{le:PhiTr},
the hypothesis holds. On the other hand, if $|G|=p^3$ with $p$ odd,
and part (i) of the hypothesis does not hold, then neither $g$ nor $h$
is central, and we have
$|C_G(g)\cap C_G(uhu^{-1})|=p$ and $|C_G(guhu^{-1})|=p^2$.
So $C_G(guhu^{-1})$ is elementary abelian, and then the transfer map from
any proper subgroup 
is zero. Finally, if $G$ is a quaternion group $Q_8$ then
the restriction maps from subgroups of order four to $Z$ is zero.
This completes the proof that Hypothesis~\ref{hyp:cent} holds
for every $g$ and $h$ in $G$.

So we may apply the theory derived in this section. 
Using the notation above, we have $\HH^1(kG)=X_0\oplus X_1$,
and since $Z(G)=\Phi(G)$ we have $[X_0,X_0]=0$ and $[X_0,X_1]\leq
X_1$. It remains to examine $[X_1,X_1]$. Let $g$ and $h$ be
non-central elements of $G$, and let $x\in H^1(C_G(g),k)$ and $y\in
H^1(C_G(h),k)$.

If $C_G(g)\ne C_G(h)$ then
there are no double coset representatives $u$ 
satisfying $C_G(g)\cap C_G(uhu^{-1})=C_G(guhu^{-1})$ in the
formula~\eqref{eq:[x,y]2},  because the intersection has index $p^2$
in $G$, and so  $[x,y]=0$. 

On the other hand, if
$C_G(g)=C_G(h)$ then writing $C$ for their common value,
the double cosets are just cosets of $C$.
Choosing $v \in G\setminus C$, we may take the double coset
representatives to be $1,v,\dots,v^{p-1}$. Now $v$ does not commute
with $h$, so defining $z=vhv^{-1}h^{-1}$, we have $Z=\langle
z\rangle$. For some $2\le m\le p-1$ we have $z^{m-1}=vgv^{-1}g^{-1}$,
$z^{m}=vghv^{-1}(gh)^{-1}$.
So for $0\le i\le p-1$ we have $v^ihv^{-i}=hz^i$,
$v^igv^{-i}=gz^{(m-1)i}$, and $v^ighv^{-i}=ghz^{mi}$.
Choose $n$ with $mn$ congruent to one modulo $p$. Then
$v^{ni}ghv^{-ni}=ghz^i$. 

All contributions to $[x,y]$ are landing in the same summand
in the centraliser decomposition, but need conjugating to match
the elements being centralised. The contribution coming from
$v^i$ is
\[ y(v^{-i}gv^{i})\,x - x(v^ihv^{-i})\,(v^i)^*(y) \]
in the $C_G(gv^ihv^{-i})$ component, namely the
$C_G(ghz^i)=C_G(v^{ni}ghv^{-ni})$ component. So we must conjugate
to get
\[ y(v^{-i}gv^i)\,(v^{-ni})^*(x) -
  x(v^ihv^{-i})\,(v^{-(n-1)i})^*(y) \]
in the $C_G(gh)$ component. If $|G|>p^3$, we have
$Z\leq \Phi(C)$, and so $x$ and $y$ vanish on $Z$. So conjugating
by $v^i$ has no effect. It follows
that the above term is independent of $i$, and when we sum
from $i=0$ to $p-1$ we get zero. 

On the other hand, if $|G|=p^3$ then $x$ and $y$ need not
vanish on the central element $z$. In this case, we obtain
\begin{align*}
y(v^{-i}gv^i)&=y(gz^{-(m-1)i})=y(g) - (m-1)iy(z), \\
x(v^ihv^{-i})&=x(hz^i)=x(h) + ix(z).
\end{align*}
So for $\gamma\in C$ we have
\[ [x,y](\gamma)=\sum_{i=0}^{p-1}\Bigl((y(g)-(m-1)iy(z))(x(\gamma)+nirx(z)) 
- (x(h)+ix(z)) (y(\gamma)+(n-1)iry(z))\Bigr). \]
Since $p\ge 3$, the expressions $\sum_{i=0}^{p-1}1$, $\sum_{i=0}^{p-1}i$ and 
 are zero in $k$, and so all but the quadratic term vanish. 
If $p\ge 5$ the quadratic term vanishes too since
$\sum_{i=0}^{p-1}i^2=0$ in $k$, but when 
$p=3$ it equals $-1$, and we have
\begin{align*}
 [x,y](\gamma) &= (-1)(-(m-1)y(z)nrx(z) - x(z)(n-1)ry(z) \\
&= r((m-1)n+(n-1))x(z)y(z) \\
&=r(mn-1)x(z)y(z).
\end{align*}
This is equal to zero
since $m$ and $n$ are inverses modulo $p$. This completes the proof in
the case $|G|=p^3$, and we are done.\bigskip

\noindent
{\bf Case 2.} $G \cong \bZ/p^2\rtimes \bZ/p$.
In this case, the failure of Hypothesis~\ref{hyp:cent} comes from the
fact that transfer from $Z$ to a subgroup $\bZ/p^2$
is non-zero in degree one (but zero in degree two). 
Restriction in degree one from $\bZ/p^2$ to $Z$ is
zero, however, and transfer from $\bZ/p$ to
$(\bZ/p)^2$ is zero in all degrees, 
so for the analysis above to fail, we must restrict
a degree one element from $(\bZ/p)^2$ to $Z$ and then transfer to $\bZ/p^2$.
So we write $X_1=X_1'\oplus X_1''$, where $X_1'$ is the sum
of the terms in the centraliser decomposition of $\HH^1(kG)$ 
with $C_G(g)$ elementary
abelian of order $p^2$ and $X_1''$ the sum of the terms with $C_G(g)$
cyclic of order $p^2$. 

As usual we have $[X_0,X_0]=0$,
$[X_0,X_1']=X_1'$, $[X_0,X_1'']=X_1''$. Furthermore, if $g$,
$h$ and $gh$ have the same centraliser 
then the same argument as in Case (i) shows that $[x,y]=0$. 
So we may assume that $C_G(g)\cap C_G(h)=Z$ and
$|C_G(gh)|=p^2$.

If $p$ is odd, then there is a unique subgroup isomorphic to
$(\bZ/p)^2$ in $G$, and so if $C_G(g)\cong (\bZ/p)^2$ and
$C_G(h)\cong \bZ/p^2$ then $C_G(gh)\cong \bZ/p^2$. Hence
$[X_1',X_1']=0$,
$[X_1',X_1'']\le X_1''$, and $[X_1'',X_1'']=0$. So $\HH^1(kG)$ is
soluble of derived length three.

On the other hand, if $p=2$ then there is a unique subgroup isomorphic
to $\bZ/4$ in $G$, and so if $C_G(g)\cong (\bZ/2)^2$ and $C_G(h)\cong
\bZ/4$ then $C_G(gh)\cong (\bZ/2)^2$. This time we have
$[X_1',X_1']\le X_1''$ and $[X_1',X_1'']=0$, and $[X_1'',X_1'']=0$.
So $\HH^1(kG)$ is again soluble of derived length three.
\end{proof}

\begin{remark}
The cases of the extraspecial $2$-groups $D_8$ and $Q_8$ of order
eight were also considered in~\cite{Eisele/Raedschelders:2020a,
RubioyDegrassi/Schroll/Solotar:preprint}.
\end{remark}

\bibliographystyle{amsplain}
\bibliography{repcoh}

\newcommand{\noopsort}[1]{}
\providecommand{\bysame}{\leavevmode\hbox to3em{\hrulefill}\thinspace}
\providecommand{\MR}{\relax\ifhmode\unskip\space\fi MR }
\providecommand{\MRhref}[2]{%
  \href{http://www.ams.org/mathscinet-getitem?mr=#1}{#2}
}
\providecommand{\href}[2]{#2}
\begin{thebibliography}{10}

\bibitem{Abbaspour:2015a}
H.~Abbaspour, \emph{{On algebraic structures of the Hochschild complex}}, Free
  loop spaces in geometry and topology (J.~Latschev and A.~Oancea, eds.),
  European Math.\ Society, 2015, pp.~165--222.

\bibitem{Adem/Milgram:1994a}
A.~Adem and R.~J. Milgram, \emph{{Cohomology of finite groups}}, Grundlehren
  der mathematischen Wissenschaften, vol. 309, Springer-Verlag, Ber\-lin/New
  York, 1994.

\bibitem{Angel/Duarte:2017a}
A.~Angel and D.~Duarte, \emph{{The BV-algebra structure of the Hochschild
  cohomology of the group ring of cyclic groups of prime order}}, Geometric,
  algebraic and topological methods for quantum field theory, World Sci. Publ.,
  Hackensack, NJ, 2017, pp.~353--372.

\bibitem{Benson:1998b}
D.~J. Benson, \emph{{Representations and Cohomology I: Basic representation
  theory of finite groups and associative algebras}}, Cambridge Studies in
  Advanced Mathematics, vol. 30, Second Edition, Cambridge University Press,
  1998.

\bibitem{Benson:1998c}
\bysame, \emph{{Representations and Cohomology II: Cohomology of groups and
  modules}}, Cambridge Studies in Advanced Mathematics, vol. 31, Second
  Edition, Cambridge University Press, 1998.

\bibitem{Burghelea:1985a}
D.~Burghelea, \emph{{The cyclic homology of the group rings}}, Comment.\ Math.\
  Helvetici \textbf{60} (1985), 354--365.

\bibitem{Cibils/Solotar:1997a}
C.~Cibils and A.~Solotar, \emph{{Hochschild cohomology algebra of abelian
  groups}}, Arch.\ Math.\ (Basel) \textbf{68} (1997), no.~1, 17--21.

\bibitem{Connes:1985a}
A.~Connes, \emph{{Noncommutative differential geometry}}, Publ.\ Math.\ IHES
  \textbf{62} (1985), 257--360.

\bibitem{Eisele/Raedschelders:2020a}
F.~Eisele and T.~Raedschelders, \emph{{On solvability of the first Hochschild
  cohomology of a finite-dimensional algebra}}, Trans.\ Amer.\ Math.\ Soc.
  \textbf{373} (2020), no.~11, 7607--7638.

\bibitem{Evens/Priddy:1985a}
L.~Evens and S.~Priddy, \emph{{The cohomology of the semi-dihedral group}},
  Conference on algebraic topology in honor of Peter Hilton (R.~Piccinini and
  D.~Sjerve, eds.), Contemp.\ Math., vol.~37, 1985, pp.~61--72.

\bibitem{Holm:1996a}
T.~Holm, \emph{{The Hochschild cohomology ring of a modular group algebra: the
  commutative case}}, Comm.\ Algebra \textbf{24} (1996), no.~6, 1957--1969.

\bibitem{Ivanov:2016a}
A.~A. Ivanov, \emph{{The BV-algebra structure on the Hochschild cohomology of
  local algebras of quaternion type in characteristic $2$}}, Journal of
  Mathematical Sciences \textbf{219} (2016), no.~3, 427--461.

\bibitem{Ivanov/Ivanov/Volkov/Zhou:2015a}
A.~A. Ivanov, S.~O. Ivanov, Y.~V. Volkov, and G.~Zhou, \emph{{BV structure on
  Hochschild cohomology of the group ring of quaternion group of order eight in
  characteristic two}}, J.~Algebra \textbf{435} (2015), 174--203.

\bibitem{Jacobson:1943a}
N.~Jacobson, \emph{{Classes of restricted Lie algebras of characteristic $p$,
  II}}, Duke Math.\ J. \textbf{10} (1943), 107--121.

\bibitem{Karoubi/Villamayor:1990a}
M.~Karoubi and O.~E. Villamayor, \emph{{Homologie cyclique d'alg\`ebres de
  groupes}}, Comptes Rendus Acad.\ Sci.\ Paris, S\'erie I \textbf{311} (1990),
  1--3.

\bibitem{Le/Zhou:2014a}
J.~Le and G.~Zhou, \emph{{On the Hochschild cohomology ring of tensor products
  of algebras}}, J.~Pure \& Applied Algebra \textbf{218} (2014), no.~8,
  1463--1477.

\bibitem{Levi:1996b}
R.~Levi, \emph{{Torsion in loop space homology of rationally contractible
  spaces}}, J.~Pure \& Applied Algebra \textbf{114} (1996), no.~1, 69--87.

\bibitem{Linckelmann/RubioyDegrassi:2020a}
M.~Linckelmann and L.~Rubio y~Degrassi, \emph{{On the Lie algebra structure of
  $\HH^1(A)$ of a finite-dimensional algebra $A$}}, Proc.\ Amer.\ Math.\ Soc.
  \textbf{148} (2020), no.~5, 1879--1890.

\bibitem{Liu/Wang/Zhou:2021a}
Y.~Liu, Z.~Wang, and G.~Zhou, \emph{{The Batalin–Vilkovisky Structure on the
  Tate-–Hochschild Cohomology Ring of a Group Algebra}}, Int.\ Math.\ Res.\
  Not. (2021), no.~6, 4079--4139.

\bibitem{Liu/Zhou:2016a}
Y.~Liu and G.~Zhou, \emph{{The Batalin--Vilkovisky structure over the
  Hochschild cohomology ring of a group algebra}}, J. Noncommut. Geom.
  \textbf{10} (2016), no.~3, 811--858.

\bibitem{Loday:1998a}
J.-L. Loday, \emph{{Cyclic homology}}, 2 ed., Grundlehren der mathematischen
  Wissenschaften, vol. 301, Springer-Verlag, Ber\-lin/New York, 1998.

\bibitem{Loday/Quillen:1984a}
J.-L. Loday and D.~G. Quillen, \emph{{Cyclic homology and the Lie algebra
  homology of matrices}}, Comment.\ Math.\ Helvetici \textbf{59} (1984),
  565--591.

\bibitem{Menichi:2004a}
L.~Menichi, \emph{{Batalin--Vilkovisky algebras and cyclic cohomology of Hopf
  algebras}}, K-Theory \textbf{32} (2004), 231--251.

\bibitem{SanchezFlores:2012a}
S.~S\'anchez-Flores, \emph{{The Lie structure on the Hochschild cohomology of a
  modular group algebra}}, J.~Pure \& Applied Algebra \textbf{216} (2012),
  718--733.

\bibitem{Siegel/Witherspoon:1999a}
S.~F. Siegel and S.~J. Witherspoon, \emph{{The Hochschild cohomology ring of a
  group algebra}}, Proc.\ London Math.\ Soc. \textbf{79} (1999), 131--157.

\bibitem{Tradler:2008a}
T.~Tradler, \emph{{The Batalin--Vilkovisky algebra on Hochschild cohomology
  induced by infinity inner products}}, Ann.\ Inst.\ Fourier (Grenoble)
  \textbf{58} (2008), no.~7, 2351--2379.

\bibitem{Volkov:2016a}
Y.~V. Volkov, \emph{{BV differential on Hochschild cohomology of Frobenius
  algebras}}, J.~Pure \& Applied Algebra \textbf{220} (2016), 3384--3402.

\bibitem{Weibel:1994a}
C.~A. Weibel, \emph{{Introduction to homological algebra}}, Cambridge Studies
  in Advanced Mathematics, vol.~38, Cambridge University Press, 1994.

\bibitem{RubioyDegrassi:2017a}
L.~Rubio y~Degrassi, \emph{{Invariance of the restricted $p$-power map on
  integrable derivations under stable equivalences}}, J.~Algebra \textbf{469}
  (2017), 288--301.

\bibitem{RubioyDegrassi/Schroll/Solotar:preprint}
L.~Rubio y~Degrassi, S.~Schroll, and A.~Solotar, \emph{{The first Hochschild
  cohomology as Lie algebra}}, Preprint, 2019. arXiv:1903.12145v2.

\bibitem{Yang:2013a}
T.~Yang, \emph{{A Batalin--Vilkovisky structure on the Hochschild cohomology of
  truncated polynomials}}, Topology and its Applications \textbf{160} (2013),
  1633--1651.

\end{thebibliography}

\end{document}